\def\b1{\mbox{\boldmath $1$}} 
\newcommand{\Biggg}{\bBigg@{3.5}} 
\theoremstyle{plain} 
\newtheorem{theorem}{\bf Theorem}[section] 
\newtheorem{lemma}{\bf Lemma}[section] 
\newtheorem{proposition}{\bf Proposition}[section] 
\newtheorem{definition}{\bf Definition}[section] 
\newtheorem{remark}{\bf Remark}[section]  
\newtheorem{example}{\bf Example}[section] 
\begin{document}
 	\title{Zero-sum stochastic linear-quadratic Stackelberg differential games of Markovian regime-switching system} 
	
\author{Fan Wu \thanks{School of Mathematics, Southeast University, Nanjing 211189, China}
\qquad Xun Li \thanks{Department of Applied Mathematics, Hong Kong Polytechnic University, Hong Kong, China}
\qquad Jie Xiong \thanks{Department of Mathematics, Southern University of Science and Technology, Shenzhen 518055, China}
\qquad Xin Zhang \thanks{Corresponding Author: School of Mathematics, Southeast University, Nanjing 211189, China; E-mail: x.zhang.seu@gmail.com}
} 

\maketitle
\begin{abstract} This paper investigates a zero-sum stochastic linear-quadratic (SLQ, for short) Stackelberg differential game problem, where the coefficients of the state equation and the weighting matrices in the performance functional are regulated by a Markov chain. By utilizing the findings in \citet{Zhang.X.2021_ILQM}, we directly present the feedback representation to the rational reaction of the follower. For the leader's problem, we derive the optimality system through the variational method and study its unique solvability from the Hilbert space point of view. We construct the explicit optimal control for the leader based on the solution to coupled differential Riccati equations (CDREs, for short) and obtain the solvability of CDREs under the one-dimensional framework.  Finally, we provide two concrete examples to illustrate the results developed in this paper.
\end{abstract}

\noindent\textbf{2020 Mathematics Subject Classification:} 91A15, 49N10, 93E20.

\noindent\textbf{Keywords: }Zero-sum stochastic  Stackelberg differential games, Linear-quadratic optimal control, Backward stochastic differential equation, Riccati equations

\maketitle

\section{Introduction}\label{section-introduction}
SLQ optimal control problem plays an important role in stochastic control theory. The forward and backward SLQ control problems have been widely studied in recent decades. The forward stochastic linear-quadratic (FSLQ, for short) was initially studied by \citet{kushner.H.1962_OSC} via the dynamic programming method. Later,  \citet{Wonham.W.M.1968} discussed the generalized version of the matrix Riccati and quadratic matrix equations, which arise in problems of stochastic control and filtering.  \citet{Tang.S.J.2003_LQR} investigated the existence and uniqueness result for the associated Riccati equation for a general SLQ problem, which solves Bismut and Peng's long-standing open problem. In addition, \citet{Tang.S.J.2003_LQR} also provided a rigorous derivation of the Riccati equation from the stochastic Hamilton system, which completes the interrelationship between the Riccati equation and the stochastic Hamilton system as two different but equivalent tools for the SLQ problem.  \citet{Wu.Z.2003_LQJR} considered the linear-quadratic stochastic optimal control with random jumps and derived existence and uniqueness result of forward-backward stochastic differential equations (FBSDE, for short) with Brownian motion and Poisson process. \citet{Hu.Y.Z.2008_LQJP} generalized the result of \citet{Wu.Z.2003_LQJR} to the case of partial information SLQ problem. \citet{Ji.Y.D.1990_LQM,Ji.Y.D.1991_LQGJ} formulated a class of continuous-time SLQ optimal controls with Markovian jumps.  \citet{Zhang.Q.1999_LQG} developed approximation schemes with hybrid controls of a class of linear quadratic Gaussian (LQG) systems modulated by a finite-state Markov chain.

The SLQ optimal control problem for backward stochastic differential equation (BSDE, for short) was initially investigated by \citet{Lim.A.E.B.2001_BLQ}, where the coefficients are deterministic, and all the weighting matrices are non-negative definiteness. They obtained the explicit solution using an approach based primarily on the completion-of-squares technique. Along this line, a couple of follow-up works appeared afterward. \citet{Huang.J.H.2009_MPBSDEP} and  \citet{Wang.G.C.2012_BLQP} considered the backward stochastic linear-quadratic (BSLQ, for short) optimal control problem under partial information. \citet{Sun.J.R.2021_BLQR} thoroughly investigated the BSLQ problem with random coefficients.  \citet{Huang.J.H.2016_BLQGMFP} studied a backward mean-field linear-quadratic-Gaussian game with complete/partial information.  \citet{Wang.G.C.2018_NZBLQ} analyzed a kind of SLQ nonzero-sum differential game with asymmetric information for BSDEs.	A dynamic game of $N$ weakly-coupled linear BSDE systems involving mean-field interactions was studied by \citet{Du.K.2018_BLQMF}.

It is worth pointing out that the above-mentioned works assume the positive/non-negative definiteness condition imposed on the weighting matrices.
To the best of our knowledge, \citet{Chen.S.P.1998_ILQ} was the first work to study FSLQ optimal control problems with an indefinite quadratic weighting control matrix, which is useful in solving continuous-time mean-variance portfolio selection problems. Since then, there has been an increasing interest in the so-called indefinite SLQ optimal control.  \citet{Li.N.2018_ILQJ} considered the indefinite FSLQ optimal control problem with Poisson jumps.  \citet{Li.X.2001_ILQJ,Li.X.2003_ILQMI} formulated indefinite FSLQ optimal controls with regime-switching jumps and tackled it using semidefinite programming. Recently, \citet{Sun.J.R.2014_NILQ} and  \citet{Sun.J.R.2016_open-closed} carefully studied the open-loop and closed-loop solvabilities for FSLQ optimal control problems under the uniform convexity condition. It was shown that open-loop solvability is equivalent to the existence of an adapted solution to an FBSDE with constraint, and closed-loop solvability is equivalent to the existence of a regular solution to the Riccati equation. In addition, the equivalence between the strongly regular solvability of the Riccati equation and the uniform convexity of the cost functional was established.  \citet{Zhang.X.2021_ILQM} successfully generalized their results to the case within the framework of regime-switching jump diffusion process.  \citet{Sun.J.R.2021_IBLQ} considered a homogeneous indefinite BSLQ optimal control problem and obtained the explicit optimal control by solving a Riccati-type equation.  \citet{Sun.J.R.2021_GIBLQ} generalized the results of \citet{Sun.J.R.2021_IBLQ} to the case of non-homogeneous indefinite BSLQ problem.

Since the pioneering work \citet{Stackelberg.H.V.1934} by Stackelberg, the theory of Stackelberg games has been widely used in economics, finance, and engineering. The Stackelberg SLQ differential game was initially studied by \citet{Bagchi.A.1981_SLQ}. From then on,  \citet{Yong.J.M.2002_SLQ} investigated the Stackelberg SLQ problem under a general framework and showed that the open-loop solution admits a state feedback representation if the corresponding stochastic Riccati equation is solvable.  A leader-follower stochastic differential game with asymmetric information was considered in \citet{Shi.J.T.2016_SLQAI}.  \citet{Moon.J.2020.SLQMFTC} and \citet{Moon.J.2021_SLQJ} analyzed the Stackelberg SLQ problem under the time-inconsistent mean-field frame and jump-diffusion model, respectively.  \citet{Du.K.2019_SBLQMF} carried out a new kind of Stackelberg differential game of mean-field backward stochastic differential equations and obtained the open-loop Stackelberg equilibrium with state feedback representation.

Although the Stackelberg SLQ differential game is widely studied, the zero-sum Stackelberg SLQ problem is rarely studied. The zero-sum SLQ Stackelberg differential games can be considered as the combination of the FSLQ problem and BSLQ problem in the sense that the follower's problem is an FSLQ problem while the leader's problem is a BSLQ problem. \citet{lin_stochastic_2012} formulated an optimal portfolio selection problem with model uncertainty as a zero-sum stochastic differential game between the investor and the market. Using techniques of SLQ control theory, they obtained the closed-form solutions to corresponding game problems. \citet{Sun.J.R.2021_ZSILQ} studied a zero-sum Stackelberg SLQ  differential game in which the state coefficients and cost weighting matrices are given deterministic functions. Under the so-called uniform convexity-concavity condition, the associated Riccati equations are solvable, and the Stackelberg equilibrium admits a linear state feedback representation.  \citet{wu_zero-sum_2024} investigated a zero-sum Stackelberg SLQ differential game with Poisson jumps, in which the coefficients of the state equation and the weighting matrices in the performance functional are allowed to be random.

In this paper, we shall generalize the result in \citet{Sun.J.R.2021_ZSILQ} to consider a zero-sum Stackelberg SLQ  differential game, in which the state equation and cost weighting matrices are affected by a finite-state Markov chain. Although the main difference between our model and the one in \cite{Sun.J.R.2021_ZSILQ} is the regime-switching jump diffusion processes, 
the two papers are different in terms of research content and technical difficulties. Below, we carefully compared our work with that in \cite{Sun.J.R.2021_ZSILQ} from several perspectives.
\begin{itemize}
\item Firstly, the main research goals between our paper and that in \citet{Sun.J.R.2021_ZSILQ} are different.  \citet{Sun.J.R.2021_ZSILQ} focus on constructing the relation between the zero-sum SLQ Nash differential game and the corresponding Stackelberg differential game based on their early research work investigating the zero-sum SLQ Nash differential game (see \citet{Sun.J.R.2014_NILQ}) and the studies of BSLQ problem (see \citet{Sun.J.R.2021_IBLQ, Sun.J.R.2021_GIBLQ}). Our paper focuses on solving the zero-sum SLQ Stackelberg differential game. We transformed the leader's problem into a BSLQ problem regulated by a Markov chain. We further solved this BSLQ problem and obtained the optimal equilibrium control for the zero-sum SLQ Stackelberg differential game, which can be seen as the main contribution of this paper. 
\item Secondly, we allow the inhomogeneous terms to appear in state process and performance functional while \citet{Sun.J.R.2021_ZSILQ} only consider the homogeneous case. As we shall see in the rest of the paper, inhomogeneous terms make constructing optimal control for the leader's problem and equilibrium value function more complex than counterparts of \citet{Sun.J.R.2021_ZSILQ}. Additionally, in the context of Markov regime switching, decoupling the optimality system of the leader's problem requires us to solve a system of CDREs, which is more complex than solving a Riccati equation alone in \citet{Sun.J.R.2021_ZSILQ}.
\item Finally, the Riccati equation corresponding to the leader's problem in \citet{Sun.J.R.2021_ZSILQ} is solved by using the forward formulation and limiting procedure developed in \cite{Lim.A.E.B.2001_BLQ,Sun.J.R.2021_IBLQ}.
In our paper, we can also represent the solution to the CDREs \eqref{L-RE} as the limit of the inverse of the solutions to another system of CDREs (see Theorem \ref{thm-F-L-REs}), which is derived from a more general FSLQ problem with the jumps of Markov chain entered into the state process (see \eqref{problem-G-M-FSLQ-state}-\eqref{problem-G-M-FSLQ}).  
Such a finding differs from the BSLQ problem studied by \citet{Sun.J.R.2021_IBLQ} for the diffusion model case. The main reason for this difference is due to the coupling term $\sum_{k=1}^{D}\lambda_{ik}(s)\Sigma(s,k)$ in CDREs \eqref{L-RE}. 
Under the one-dimensional case, we simplify the original CDREs \eqref{L-RE} into a new one \eqref{L-RE-rewritten} and provide its solvability based on the findings in \citet{Zhang.X.2021_ILQM} (see Theorem \ref{RE-solvability}).
\end{itemize}
The main results of our paper are as follows.
	\begin{itemize}
	    \item We convert the leader's problem into a BSLQ problem regulated by a Markov chain and characterize its open-loop optimal control by the solvability of a system of constrained FBSDEs and the convexity  of the cost functional. 
	    \item We also prove that the BSLQ problem admits a unique optimal control when the corresponding cost functional is uniform convex from the Hilbert space point of view.
	    \item Under the uniform convexity-concavity condition (see assumptions (H3)-(H4)), we obtain  
	    the feedback representation of the follower's rational reaction and explicit optimal control for the leader based on the solutions to  \eqref{L-RE}-\eqref{phi} (see, Section \ref{section-construction}). 
	    \item We prove that the solution to the CDREs \eqref{L-RE} corresponding to leader’s problem can be represented as a limit of the inverse of solutions to another system of Riccati differential equations \eqref{FSLQ-REs}, which can be derived from a class of more general FSLQ problems with the jumps of Markov chain entered into the state process. Moreover,  we provide the solvability result for CDREs \eqref{L-RE} under the one-dimensional framework.
	\end{itemize}
	The rest of the paper is organized as follows. Section \ref{section-Preliminaries-model} introduces some useful notations and formulates the zero-sum Stackelberg SLQ problem with regime-switching jumps. In section \ref{section-Follower},  we derive the follower's rational reaction based on the results in \citet{Zhang.X.2021_ILQM}. Section \ref{section-Leader} aims to solve the leader's problem. The open-loop solvability is obtained in this section, and the explicit optimal control is constructed based on the solution to a system of CDREs, whose solvability is discussed in Section \ref{section-RE-solvability}.
	Finally,  Section \ref{section-example} concludes the paper by giving two concrete examples to illustrate the results developed in the earlier sections.

	\section{Preliminaries and model formulation} \label{section-Preliminaries-model}
	Let $(\Omega,\mathcal{F},\mathbb{P} )$ be a complete probability space with the natural filtration $\mathbb{F} :=
	\{ \mathcal{F}_t, 0 \leq t \leq T\}$ generated by the following two mutually independent stochastic
	processes and augmented by all the $\mathbb{P}$-null sets in $\mathcal{F}$ :
	\begin{itemize}
		\item A one-dimensional standard Brownian motion $W=\left\{W(t);0\leq t\leq T\right\}$;
		\item A continuous time, finite-state, irreducible Markov chain $\alpha=\left\{\alpha(t);0\leq t\leq T\right\}$.
	\end{itemize}
	We denote the state space of the Markov chain as $\mathcal{S}:=\left\{1,2,...,D\right\}$, where $D$ is a finite natural number, and define the generator of the chain under $\mathbb{P}$ as $\lambda(t):=\left[\lambda_{ij}(t)\right]_{i,j=1,2,...,D}$. Here,  for $i\neq j$,  $\lambda_{ij}(t)\geq 0$ is the determinate transition intensity of the chain from  state $i$ to state $j$ at time $t$
	and $\sum_{j=1}^{D}\lambda_{ij}(t)=0$ for any fixed $i$. In the following, let $N_{k}(t)$ be the number of jumps  into state $k$ up to time $t$ and set
	$$
	\widetilde{N}_{k}(t)=N_{k}(t)-\int_{0}^{t}\sum_{i\neq k}\lambda_{ik}(s)I_{\{\alpha(s-)=i\}}(s)ds.
	$$
 Then for each $k=1,2,...,D$, the term $\widetilde{N}_{k}(t)$ is an $\left(\mathbb{F},\mathbb{P}\right)$-martingale. For any given D-dimensional vector process $\mathbf{\Gamma}(\cdot)=\left[\Gamma_1(\cdot),\Gamma_2(\cdot),\cdots,\Gamma_D(\cdot)\right]$, we define $$\mathbf{\Gamma}(s)\cdot d\mathbf{\widetilde{N}}(s)\triangleq\sum_{k=1}^{D}\Gamma_{k}(s)d\widetilde{N}_{k}(s).$$

Throughout the paper, we denote $\mathbb{R}^{n \times m}$ as the Euclidean space consisting of all $n \times m$ matrices and endowed with the Frobenius inner product $\big<M,N\big>=tr\left(M^{\top}N\right)$, where $M^{\top}$ and $tr(M)$ represent the transpose and trace of $M$, respectively. When $m=1$, we simplify $\mathbb{R}^{n \times 1}$ as $\mathbb{R}^{n}$. The identity matrix of size $n$ is denoted by $I_{n}$, and often written as $I$ for simplicity when no confusion arises. Let $\mathbb{S}^n \, \left(\mathbb{S}_{+}^n,\, \overline{\mathbb{S}_{+}^n}\right)$  be the set consisting of all $n \times n$ symmetric (positive-definite, positive semi-definite) matrices. For $M, N \in \mathbb{S}^n$, we write $M \geqslant N$ (respectively, $M>N$) if $M-N$ is positive semi-definite (respectively, positive definite). And for a $\mathbb{S}^n$-valued measurable function $F$ on $[0, T]$, we write
$$\left\{\begin{array}{lll}F \geqslant 0 & \text { if } \quad F(s) \geqslant 0, & \text { a.e. } s \in[0, T], \\ F>0 & \text { if } \quad F(s)>0, & \text { a.e. } s \in[0, T], \\ F \gg 0 & \text { if } \quad F(s) \geqslant \delta I_n, & \text { a.e. } s \in[0, T], \text { for some } \delta>0 .\end{array}\right.$$
Moreover, we use $F\leq 0$, $F<0$ and $F\ll 0$ to indicated that $-F\geq 0$, $-F>0$ and $-F\gg 0$, respectively.

Let $\mathcal{P}$ be the $\mathbb{F}$ predictable $\sigma$-field on $[0,T]\times\Omega$ and for a given process $\varphi(\cdot)$, we write $\varphi \in \mathbb{F}$ (respectively, $\varphi \in \mathcal{P}$) if it is $\mathbb{F}$-progressively measurable (respectively, $\mathcal{P}$-measurable). Then, for any given Euclidean space $\mathbb{H}$, we introduce the following space:
	\begin{align*}
		C(0, T; \mathbb{H}) &=\{\varphi:[0, T] \rightarrow \mathbb{H} \mid \varphi(\cdot) \text { is continuous }\}, \\
		L^{\infty}(0, T ; \mathbb{H}) &=\left\{\varphi:[0, T] \rightarrow \mathbb{H} \mid \operatorname{esssup}_{s \in[0, T]}| \varphi(s)|<\infty\right\},
	\end{align*}
	and
	\begin{align*}
		L_{\mathcal{F}_{T}}^{2}(\Omega ; \mathbb{H}) &=\left\{\xi: \Omega \rightarrow \mathbb{H} \mid \xi \text { is } \mathcal{F}_{T} \text {-measurable, } \mathbb{E}|\xi|^{2}<\infty\right\},\\
		\mathcal{S}_{\mathbb{F}}^{2}(0, T ; \mathbb{H}) &=\left\{\varphi:[0, T] \times \Omega \rightarrow \mathbb{H} \mid \varphi(\cdot) \in \mathbb{F} \text {, } \mathbb{E}\left[\sup _{s \in[0, T]}|\varphi(s)|^{2}\right]<\infty\right\}, \\
		L_{\mathbb{F}}^{2}(0, T ; \mathbb{H}) &=\left\{\varphi:[0, T] \times \Omega \rightarrow \mathbb{H} \mid \varphi(\cdot) \in \mathbb{F}\text{, } \mathbb{E} \int_{0}^{T}|\varphi(s)|^{2} ds<\infty\right\}, \\
		L_{\mathcal{P}}^{2}\left(0, T ; \mathbb{H}\right) &=\left\{\varphi:[0, T] \times \Omega \rightarrow \mathbb{H} \mid \varphi(\cdot) \in \mathcal{P}\text{, } \mathbb{E} \int_{0}^{T}|\varphi(s)|^{2} ds<\infty\right\} .
	\end{align*}
In addition, for any given Banach space $\mathbb{B}$, we denote
$$\mathcal{D}\left(\mathbb{B}\right)=\left\{\mathbf{\Lambda}=\left(\Lambda(1),\cdots,\Lambda(D)\right) \mid \Lambda(i) \in \mathbb{B}\text{, } \forall i\in \mathcal{S}\right\}.$$

Now,  we consider the following controlled stochastic differential equation (SDE) on $[0,T]$:
	\begin{equation}\label{state}
		\left\{
		\begin{aligned}
			dX(s)&=\left[A(s,\alpha(s))X(s)+B_{1}(s,\alpha(s))u_{1}(s)+B_{2}(s,\alpha(s))u_{2}(s)+b(s)\right]ds\\
			&\quad+\left[C(s,\alpha(s))X(s)+D_{1}(s,\alpha(s))u_{1}(s)+D_{2}(s,\alpha(s))u_{2}(s)+\sigma(s)\right]dW(s)\\
			X(0)&=x, \quad \alpha(0)=i
		\end{aligned}
		\right.
	\end{equation}
	For fixed $i\in \mathcal{S}$, $A(\cdot,i)$, $B_{1}(\cdot,i)$, $B_{2}(\cdot,i)$, $C(\cdot,i)$, $D_{1}(\cdot,i)$, $D_{2}(\cdot,i)$, are matrix-valued deterministic functions of proper dimensions and $b(\cdot)$ and $\sigma(\cdot)$ are vector-valued stochastic process. In the above, $X(\cdot)\equiv X(\cdot;x,i,u_{1},u_{2})$, valued in $\mathbb{R}^{n}$, is called the \emph{state process} with the initial value $x$. For $k=1,2$, $u_{k}(\cdot)$, valued in $\mathbb{R}^{m_{k}}$, is called the \emph{control process} of Player $k$ and we denote $\mathcal{U}_{k}=L_{\mathbb{F}}^{2}(0,T;\mathbb{R}^{m_{k}})$ as the space consisting all admissible controls for  Player $k$. We suppose the two players have opposing aims, and each player can affect the evolution of the state process \eqref{state} by selecting his/her own control.

	To measure the performance of the controls $u_{1}(\cdot)$ and $u_{2}(\cdot)$, we introduce the following criterion functional:
	\begin{equation}\label{costfunction}
		\begin{aligned}
			J\left(x,i; u_{1}(\cdot), u_{2}(\cdot)\right) \triangleq  
			\mathbb{E}\bigg\{&\int_{0}^{T}
			\big[\langle Q(s,\alpha(s))X(s), X(s)\rangle+\sum_{i=1}^{2}\langle R_{i}(s,\alpha(s))u_{i}(s), u_{i}(s)\rangle\big] ds\\
			&+\langle M(T,\alpha(T)) X(T), X(T)\rangle+2\langle m, X(T)\rangle\bigg\}.
		\end{aligned}
	\end{equation}
	For any fixed $i\in\mathcal{S}$, $M(T,i)$ is a  symmetric matrix, $Q(\cdot,i)$, $R_{1}(\cdot,i)$, $R_{2}(\cdot,i)$ are given determinate matrix-valued functions of proper dimensions, and $m$ is $\mathcal{F}_{T}$-measurable random variables.

	In our setting, the criterion functional \eqref{costfunction} is regarded as the loss of Player $1$ and the gain of Player $2$. The  Player $2$ is the leader, who announces his/her control $u_{2}(\cdot)$ first, and Player $1$ is the follower, who chooses his/her rational reaction accordingly. So whatever the leader announces $u_{2}(\cdot)$, the follower will find the optimal reaction $\bar{u}_{1}[x,i,u_{2}](\cdot)$ (depending on $u_{2}(\cdot)$ and initial value $x,\,i$) to minimize  the functional $	J\left(x,i; u_{1}(\cdot), u_{2}(\cdot)\right)$. We assume the leader can obtain the full information of the follower and predict the follower's optimal reaction. Hence, he/she will choose the optimal control $u_{2}^{*}(\cdot)$ to maximize the functional $J(x,i;\bar{u}_{1}[x,i,u_{2}](\cdot),u_{2}(\cdot))$. This constitutes a two-person zero-sum SLQ  Stackelberg differential game of Markovian regime-switching system, and we denote it as Problem (M-ZLQ). The $\left(u_{1}^{*},u_{2}^{*}(\cdot) \right)\triangleq \left(\bar{u}_{1}[x,i,u_{2}^{*}](\cdot),u_{2}^{*}(\cdot)\right)$ becomes the Stackelberg equilibrium of Problem (M-ZLQ), whose mathematical definition is provided as follows.


	\begin{definition}
	A control pair $(u_{1}^{*}(\cdot),u_{2}^{*}(\cdot))\in \mathcal{U}_{1}\times \mathcal{U}_{2}$ is called a Stackelberg equilibrium of Problem (M-ZLQ) for initial value $(x,i)\in\mathbb{R}^{n}\times\mathcal{S}$ if
	\begin{equation}
	    \inf_{u_{1}(\cdot)\in\mathcal{U}_{1}} J(x,i;u_{1}(\cdot),u_{2}^{*}(\cdot))=J(x,i;u_{1}^{*}(\cdot),u_{2}^{*}(\cdot))=\sup_{u_{2}(\cdot)\in\mathcal{U}_{2}}\inf_{u_{1}(\cdot)\in\mathcal{U}_{1}}J(x,i;u_{1}(\cdot),u_{2}(\cdot))\triangleq V(x,i).
	\end{equation}
	In the above, we denote $V(x,i)$ as the equilibrium value function of Problem (M-ZLQ). If $b(\cdot),\,\sigma(\cdot),\,m=0$, then corresponding criterion functional, equilibrium value function and problem are denoted by $J^{0}\left(x,i; u_{1}(\cdot), u_{2}(\cdot)\right)$, $V^{0}(x,i)$ and Problem (M-ZLQ)$^{0}$, respectively.
	\end{definition}

	Throughout this paper, we will work under the following standard assumptions:

	\textbf{(H1)} For each $i\in \mathcal{S}$, the coefficients of the state equation \eqref{state} satisfy the following
	\begin{equation*}
		\left\{
		\begin{aligned}
			&A(\cdot, i) \in L^{\infty}\left(0, T ; \mathbb{R}^{n \times n}\right), \quad
			B_{k}(\cdot, i) \in L^{\infty}\left(0, T ; \mathbb{R}^{n \times m_{k}}\right), \quad
			b(\cdot) \in L_{\mathbb{F}}^{2}\left(0, T ;\mathbb{R}^{n}\right), \\
			&C(\cdot, i) \in L^{\infty}\left(0, T ; \mathbb{R}^{n \times n}\right), \quad
			D_{k}(\cdot, i) \in L^{\infty}\left(0, T ; \mathbb{R}^{n \times m_{k}}\right), \quad
			\sigma(\cdot) \in L_{\mathbb{F}}^{2}\left(0, T ;\mathbb{R}^{n}\right)\quad k=1,2
		\end{aligned}
		\right.
	\end{equation*}

	\textbf{(H2)}  For each $i\in \mathcal{S}$, the weighting coefficients in the cost functional \eqref{costfunction} satisfy the following:
	\begin{equation*}
		\left\{
		\begin{aligned}
			&Q(\cdot, i) \in L^{\infty}\left(0, T ; \mathbb{S}^{n}\right), \quad
			R_{k}(\cdot, i) \in L^{\infty}\left(0, T ; \mathbb{S}^{m_{k}}\right), \quad k=1,2\\
			&M(\cdot,i) \in \mathbb{S}^{n}, \quad
			m(\cdot) \in L_{\mathbb{F}_{T}}^{2}\left(\Omega ;\mathbb{R}^{n}\right)
		\end{aligned}
		\right.
	\end{equation*}
	Clearly, for any $(x,i)\in\mathbb{R}^{n}\times \mathcal{S}$ and $(u_{1}(\cdot),u_{2}(\cdot))\in \mathcal{U}_{1}\times \mathcal{U}_{2}$, the state equation \eqref{state} admits a unique solution in $\mathcal{S}_{\mathbb{F}}^{2}(0, T ; \mathbb{R}^{n})$ under the assumption (H1).  Then, under assumption (H2), the quadratic performance functional $J(x,i;u_{1}(\cdot),u_{2}(\cdot))$ is well-defined and consequently, Problem (M-ZLQ) is well-posed. Note that the performance coefficients $M(\cdot,i)$, $Q(\cdot,i)$, $R_{1}(\cdot,i)$ and $R_{2}(\cdot,i)$ in the assumption (H2) are not need to be positive (semi)definite matrices. Hence, we are about to solve an indefinite zero-sum Stackelberg SLQ differential games problem.

\section{The follower's problem}\label{section-Follower}
In this section, we first derive the rational control of the follower. Let $u_{2}(\cdot)\in\mathcal{U}_{2}$ be a given announced strategy of the leader. Then the follower aims  to find an optimal reaction $\bar{u}_{1}[x,i,u_{2}](\cdot)$ to minimize the performance functional \eqref{costfunction}, which is equivalent to minimize the following cost functional:
\begin{equation}\label{F-cost}
	\begin{aligned}
		J_{F}\left(x,i ; u_{1}(\cdot)\right) \triangleq  
		\mathbb{E}\bigg\{&\int_{0}^{T}
		\big[\langle Q(s,\alpha(s))X(s), X(s)\rangle+\langle R_{1}(s,\alpha(s))u_{1}(s), u_{1}(s)\rangle\big] ds\\
		&+\langle M(T,\alpha(T)) X(T), X(T)\rangle+2\langle m, X(T)\rangle\bigg\}.
	\end{aligned}
\end{equation}
Therefore, the follower's problem is, in fact, a forward stochastic linear-quadratic problem of the Markovian regime-switching system (M-FSLQ problem, for short), which can be formally summarized as follows.

\textbf{Problem (M-ZLQ-F)} For any given initial state $(x,i)\in \mathbb{R}^{n}\times\mathcal{S}$ and announced strategy $u_{2}(\cdot)\in\mathcal{U}_{2}$ of the leader, find an optimal reaction $\bar{u}_{1}[x,i,u_{2}](\cdot)$ such that
\begin{equation}
    J_{F}\left(x,i;\bar{u}_{1}[x,i,u_{2}](\cdot)\right)=\inf_{u_{1}(\cdot)\in\mathcal{U}_{1}}J_{F}\left(x,i ; u_{1}(\cdot)\right)\triangleq V_{F}(x,i).
\end{equation}
 In the above, we denote $V_{F}(x,i)$ as the value function of the Problem (M-ZLQ-F). If $u_{2}(\cdot)=0$, $b(\cdot)=\sigma(\cdot)=0$, and $m=0$, then the corresponding cost functional,  value function and problem are denoted by $J_{F}^{0}\left(x,i; u_{1}(\cdot)\right)$, $V_{F}^{0}(x,i)$ and Problem (M-ZLQ-F)$^{0}$, respectively. It is worth mentioning that the cost functional $J_{F}\left(x,i ; u_{1}(\cdot)\right)$ and value function $V_{F}(x,i)$ defined here is related to announced strategy $u_{2}(\cdot)$ of the leader since the state process $X(\cdot)$ is affected by $u_{2}(\cdot)$.

We introduce the following assumption before we solve the Problem (M-ZLQ-F).

 \textbf{(H3)} There exists a  constant $\lambda_{F}>0$ such that
\begin{equation}\label{J-uni-convex}
	J^{0}(0,i;u_{1}(\cdot),0)\geq \lambda_{F} \mathbb{E} \int_{0}^{T}|u_{1}(s)|^{2}ds, \qquad  \forall u_{1}(\cdot)\in\mathcal{U}_{1}, \quad \forall  i\in \mathcal{S}.
\end{equation}
The above assumption is called the uniformly convex condition for Problem (M-ZLQ). Under this condition, one can easily verify that
\[J_{F}^{0}(0,i;u_{1}(\cdot))=J^{0}(0,i;u_{1}(\cdot),0)\geq \lambda_{F} \mathbb{E} \int_{0}^{T}|u_{1}(s)|^{2}ds, \qquad  \forall u_{1}(\cdot)\in\mathcal{U}_{1}, \quad \forall  i\in \mathcal{S},\]
which implies that the Problem (M-ZLQ-F) is also uniformly convex.

The (M-FSLQ) problem has been perfectly solved by \citet{Zhang.X.2021_ILQM}. Hence, we can directly obtain the optimal reaction of the follower based on the results in \cite{Zhang.X.2021_ILQM}. For notation simplicity, let
\begin{align*}
	\widehat{S}_{1}(s, i) &:=B_{1}(s, i)^{\top} P(s, i)+D_{1}(s, i)^{\top} P(s, i) C(s, i),\\
	\widehat{R}_{1}(s, i) &:=R_{1}(s, i)+D_{1}(s, i)^{\top} P(s, i) D_{1}(s, i),\\
	\Xi(s,i)&:=D_{1}(s,i)^{\top}P(s,i)D_{2}(s,i),\\
	\widehat{A}(s,i)&:=\widehat{S}_{1}(s, i)^{\top} \widehat{R}_{1}(s, i)^{-1} B_{1}(s, i)^{\top}-A(s, i)^{\top},\\
	\widehat{C}(s,i)&:=\widehat{S}_{1}(s, i)^{\top} \widehat{R}_{1}(s, i)^{-1} D_{1}(s,i)^{\top}-C(s,i)^{\top},\\
	\widehat{H}(s,i)&:=\widehat{C}(s,i)P(s,i)D_{2}(s,i)-P(s,i)B_{2}(s,i),\\
	\widehat{\rho}_{1}(s)&:=B_{1}(s,\alpha(s))^{\top}Y(s)+D_{1}(s,\alpha(s))^{\top}Z(s)+D_{1}(s,\alpha(s))^{\top}P(s,\alpha(s))\sigma,\\
	\widehat{f}(s)&:=\widehat{C}(s,\alpha(s))P(s,\alpha(s))\sigma(s)-P(s,\alpha(s))b(s).
\end{align*}
With the above notations, we further introduce the following CDREs:
\begin{equation}\label{F-RE}
	\left\{
	\begin{aligned}
		\dot{P}(s, i) &=-P(s, i) A(s, i)-A(s, i)^{\top} P(s, i)-C(s, i)^{\top} P(s, i) C(s, i) \\
		&+\widehat{S}_{1}(s, i)^{\top} \widehat{R}_{1}(s, i)^{-1} \widehat{S}_{1}(s, i)-Q(s, i)-\sum_{k=1}^{D} \lambda_{i k}(s) P(s, k), \quad \text { a.e. } \quad s \in[0, T] \\
		P(T, i) &=M(T, i), \quad i \in \mathcal{S},
	\end{aligned}
	\right.
\end{equation}
and linear BSDE:
\begin{equation}\label{L-state}
	\left\{
	\begin{aligned}
		dY(s)&=\big[\widehat{A}(s,\alpha(s))Y(s)+\widehat{C}(s,\alpha(s))Z(s)+\widehat{H}(s,\alpha(s))u_{2}(s)+\widehat{f}(s)\big]ds\\
		&\quad+Z(s) dW(s)+\mathbf{\Gamma}(s)\cdot d\mathbf{\widetilde{N}}(s), \quad s \in[0, T], \\
		Y(T)&=m.
	\end{aligned}
	\right.
\end{equation}
Then, combining Theorem $5.2$, Theorem $6.3$, and Corollary $6.5$ in \citet{Zhang.X.2021_ILQM}, we have the following theorem.

\begin{theorem}\label{thm-M-SG-F}
Let assumptions (H1)-(H3) hold. Then for any given $(x,i)\in \mathbb{R}^{n}\times \mathcal{S}$ and $u_{2}(\cdot)\in\mathcal{U}_{2}$, the following results hold:
\begin{enumerate}
	\item  The CDREs \eqref{F-RE} admit a unique solution $\mathbf{P}(\cdot)\equiv\left[P(\cdot,1),\cdots,P(\cdot,D)\right]\in\mathcal{D}\left( C(0, T; \mathbb{S}^{n})\right)$ such that
	\begin{equation}\label{M-F-SRC}
		\widehat{R}_{1}(\cdot, i)\gg 0 \quad  a.e.\quad \forall i\in\mathcal{S} .
	\end{equation}
	\item The optimal reaction of follower admits a feedback representation:
	\begin{equation}\label{F-control}
		\begin{aligned}
			\bar{u}_{1}[x,i,u_{2}](s)&=-\widehat{R}_{1}(s, \alpha(s))^{-1}
			\big[\widehat{S}_{1}(s,\alpha(s))\bar{X}(s)+\Xi (s,\alpha(s))u_{2}(s)+\widehat{\rho}_{1}(s)\big],
		\end{aligned}
	\end{equation}
	where $\bar{X}(s)$ solves the  SDE:
	\begin{equation}\label{F-optimal-state}
		\left\{
		\begin{aligned}
			d\bar{X}(s)&=\bigg[\big(A(s,\alpha(s))
			-B_{1}(s,\alpha(s))\widehat{R}_{1}(s, \alpha(s))^{-1}
			\widehat{S}_{1}(s,\alpha(s))\big)\bar{X}(s)\\
			&\quad+\bigg(B_{2}(s,\alpha(s))-B_{1}(s,\alpha(s))\widehat{R}_{1}(s, \alpha(s))^{-1}\Xi(s,\alpha(s))\bigg)u_{2}(s)\\
			&\quad-B_{1}(s,\alpha(s))\widehat{R}_{1}(s, \alpha(s))^{-1}\widehat{\rho}_{1}(s)
			+b(s)\bigg]ds\\
			&\quad+\bigg[\bigg(C(s,\alpha(s))-D_{1}(s,\alpha(s))\widehat{R}_{1}(s,\alpha(s))^{-1}\widehat{S}_{1}(s,\alpha(s))\bigg)\bar{X}(s)\\
			&\quad+\bigg(D_{2}(s,\alpha(s))-D_{1}(s,\alpha(s))\widehat{R}_{1}(s, \alpha(s))^{-1}\Xi(s,\alpha(s))\bigg)u_{2}(s)\\
			&\quad-D_{1}(s,\alpha(s))\widehat{R}_{1}(s, \alpha(s))^{-1}\widehat{\rho}_{1}(s)+\sigma(s)\bigg]dW(s),\\
			X(0)&=x, \quad \alpha(0)=i.
		\end{aligned}
		\right.
	\end{equation}
	\item The value function  of the follower is given by
	\begin{equation}\label{F-value function}
		\begin{aligned}
			\hspace{-0.8cm}V_{F}(x,i)
			&=\mathbb{E}\bigg\{\int_{0}^{T}\big[
			2\big<Y(s),B_{2}(s,\alpha(s))u_{2}(s)+b(s)\big>+2\big<Z(s),D_{2}(s,\alpha(s))u_{2}(s)+\sigma(s)\big>\\
			&\qquad+\big<P(s,\alpha(s))\big[D_{2}(s,\alpha(s))u_{2}(s)+\sigma(s)\big],
			D_{2}(s,\alpha(s))u_{2}(s)+\sigma(s)\big>\\
			&\qquad-\big<\widehat{R}_{1}(s,\alpha(s))^{-1}\big[\widehat{\rho}_{1}(s)+\Xi(s,\alpha(s))u_{2}(s)\big],
			\widehat{\rho}_{1}(s)+\Xi(s,\alpha(s))u_{2}(s)\big>\big]ds\\
			&\qquad+\big<P(0,i)x,x\big>+2\big<Y(0),x\big>\bigg\}.
		\end{aligned}
	\end{equation}
\end{enumerate}
\end{theorem}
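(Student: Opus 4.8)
The plan is to recognize Problem (M-ZLQ-F) as a \emph{non-homogeneous} M-FSLQ problem in the single control $u_1(\cdot)$, with the announced strategy $u_2(\cdot)$ together with $b(\cdot)$, $\sigma(\cdot)$, and $m$ all playing the role of fixed exogenous inputs, and then to invoke the solvability theory of \citet{Zhang.X.2021_ILQM} essentially verbatim. Concretely, I would first regroup the state dynamics \eqref{state} into the part driven by $u_1$ and the part that the follower cannot control: setting $\tilde b(s):=B_2(s,\alpha(s))u_2(s)+b(s)$ and $\tilde\sigma(s):=D_2(s,\alpha(s))u_2(s)+\sigma(s)$, both of which belong to $L_{\mathbb{F}}^{2}(0,T;\mathbb{R}^{n})$ since $u_2(\cdot)\in\mathcal{U}_2$ and (H1) holds, the follower then faces a standard controlled SDE in $u_1$ with inhomogeneous coefficients $\tilde b,\tilde\sigma$, and $J_F$ in \eqref{F-cost} is exactly the M-FSLQ cost functional treated in \cite{Zhang.X.2021_ILQM}.

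Second, I would check the hypotheses of that theory. Assumptions (H1)--(H2) supply the integrability and measurability of all coefficients, and the text already records that (H3) gives $J_F^{0}(0,i;u_1(\cdot))=J^{0}(0,i;u_1(\cdot),0)\ge\lambda_F\,\mathbb{E}\int_0^T|u_1(s)|^2\,ds$, i.e.\ the homogeneous follower's cost is uniformly convex in $u_1$. By the corresponding solvability result (Theorem 5.2 of \citet{Zhang.X.2021_ILQM}), this uniform convexity is equivalent to the strongly regular solvability of the associated CDREs, which yields part (1): the unique $\mathbf{P}(\cdot)\in\mathcal{D}(C(0,T;\mathbb{S}^{n}))$ solving \eqref{F-RE} with $\widehat{R}_1(\cdot,i)\gg 0$ for every $i\in\mathcal{S}$. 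I would emphasize that \eqref{F-RE} involves only the homogeneous quadratic data $(A,B_1,C,D_1,Q,M,\lambda)$ and is untouched by $u_2,b,\sigma,m$, consistent with the Riccati equation depending solely on the quadratic structure.

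Third, with $\mathbf{P}(\cdot)$ in hand I would invoke Theorem 6.3 and Corollary 6.5 of \cite{Zhang.X.2021_ILQM} to produce the feedback law \eqref{F-control} and the value function \eqref{F-value function}. Applying It\^o's formula to $\langle P(s,\alpha(s))\bar X(s),\bar X(s)\rangle$ plus the linear correction $2\langle Y(s),\bar X(s)\rangle$ and completing the square in $u_1$, the first-order condition gives the minimizer $-\widehat{R}_1^{-1}[\widehat{S}_1\bar X+\Xi u_2+\widehat{\rho}_1]$, while consistency of the completion forces $(Y,Z,\mathbf{\Gamma})$ to solve the backward equation \eqref{L-state} with terminal value $m$; substituting back produces \eqref{F-value function}. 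The one genuinely delicate step is the bookkeeping that matches $\widehat{H},\widehat{f},\widehat{\rho}_1$ with the correct combinations of $P,Y,Z,C,D_1,D_2,b,\sigma$. As a guiding self-consistency check I would verify that the adjoint generator only sees the combined inhomogeneities, namely $\widehat{H}(s,\alpha(s))u_2(s)+\widehat{f}(s)=\widehat{C}(s,\alpha(s))P(s,\alpha(s))\tilde\sigma(s)-P(s,\alpha(s))\tilde b(s)$, and similarly that the cross term $D_1^{\top}P\tilde\sigma=\Xi u_2+D_1^{\top}P\sigma$ splits into the explicit $\Xi u_2$ kept in \eqref{F-control} and the $\sigma$-part absorbed into $\widehat{\rho}_1$. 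Once this dictionary is fixed, everything else is a direct transcription of the cited results, so I expect no conceptual obstacle beyond careful notational alignment and confirming that the extra jump component $\mathbf{\Gamma}$ is the piece of the regime-switching martingale representation that keeps \eqref{L-state} well-posed.
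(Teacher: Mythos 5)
Your proposal is correct and matches the paper's treatment: the paper offers no independent proof of this theorem, obtaining it exactly as you do by reducing the follower's problem to a non-homogeneous M-FSLQ problem (with $u_{2}$ absorbed into the inhomogeneities) and citing Theorem 5.2, Theorem 6.3, and Corollary 6.5 of \citet{Zhang.X.2021_ILQM}, with (H3) supplying the uniform convexity needed for the strongly regular solvability of \eqref{F-RE}. Your dictionary checks, e.g.\ $\widehat{H}u_{2}+\widehat{f}=\widehat{C}P(D_{2}u_{2}+\sigma)-P(B_{2}u_{2}+b)$ and $D_{1}^{\top}P(D_{2}u_{2}+\sigma)=\Xi u_{2}+D_{1}^{\top}P\sigma$, are exactly the notational alignment the paper relies on implicitly.
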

\section{The  leader's problem}\label{section-Leader}
We now return to solve the leader's problem. Both open-loop solvability and explicit optimal control are obtained in this section.
\subsection{Problem reduction}\label{subsection-simplify}
In previous section, we have obtained the optimal reaction $\bar{u}_{1}[x,i,u_{2}](\cdot)$ (see, equation \eqref{F-control}) of the follower. Knowing this, the leader's problem is to find an optimal control  $u_{2}^{*}(\cdot)\in \mathcal{U}_{2}$ to maximize $J(x,i,\bar{u}_{1}[x,i,u_{2}](\cdot),u_{2}(\cdot))$.  For simplicity, we denote
\begin{align*}
	&G(s,i):=B_{1}(s,i)\widehat{R}_{1}(s,i)^{-1}B_{1}(s,i)^{\top},\quad
	S_{1}(s,i):=D_{1}(s,i)\widehat{R}_{1}(s,i)^{-1}B_{1}(s,i)^{\top},\\
	&S_{2}(s,i):=\Xi(s,i)^{\top}\widehat{R}_{1}(s,i)^{-1}B_{1}(s,i)^{\top}-B_{2}(s,i)^{\top},\quad
	T_{11}(s,i):=D_{1}(s,i)\widehat{R}_{1}(s,i)^{-1}D_{1}(s,i)^{\top},\\
	&T_{21}(s,i)=T_{12}(s,i)^{\top}:=\Xi(s,i)^{\top}\widehat{R}_{1}(s,i)^{-1}D_{1}(s,i)^{\top}-D_{2}(s,i)^{\top},\\
	&T_{22}(s,i):=\Xi(s,i)^{\top}\widehat{R}_{1}(s,i)^{-1}\Xi(s,i)-R_{2}(s,i)-D_{2}(s,i)^{\top}P(s,i))D_{2}(s,i),\\
	&q(s):=B_{1}(s,\alpha(s))\widehat{R}_{1}(s,\alpha(s))^{-1}D_{1}(s,\alpha(s))^{\top}P(s,\alpha(s))\sigma(s)-b(s),\\
	&\rho_{1}(s):=D_{1}(s,\alpha(s))\widehat{R}_{1}(s,\alpha(s))^{-1}D_{1}(s,\alpha(s))^{\top}P(s,\alpha(s))\sigma(s)-\sigma(s),\\
	&\rho_{2}(s):=\Xi(s,\alpha(s))^{\top}\widehat{R}_{1}(s,\alpha(s))^{-1}D_{1}(s,\alpha(s))^{\top}P(s,\alpha(s))\sigma(s)-D_{2}(s,\alpha(s))^{\top}P(s,\alpha(s))\sigma(s).
\end{align*}
Then we can rewrite the $J\left(x,i,\bar{u}_{1}[x,i,u_{2}](\cdot),u_{2}(\cdot)\right)$ as follows:
\begin{equation}\label{costfunction2}
    \begin{aligned}
	&\quad J(x,i,\bar{u}_{1}[x,i,u_{2}](\cdot),u_{2}(\cdot))=V_{F}(x,i)+\mathbb{E}\int_{0}^{T} \big<R_{2}(s,\alpha(s))u_{2}(s),u_{2}(s)\big>\\
	&=\mathbb{E}\bigg\{\int_{0}^{T}\big[-\big<\widehat{R}_{1}(s,\alpha(s))^{-1}D_{1}(s,\alpha(s))^{\top}P(s,\alpha(s))\sigma(s),
	D_{1}(s,\alpha(s))^{\top}P(s,\alpha(s))\sigma(s)\big> \\
	&\qquad\qquad+\big<P(s,\alpha(s))\sigma(s),\sigma(s)\big>\big]ds+\big<P(0,i)x,x\big>\bigg\}-J_{L}(m,i;u_{2}(\cdot)),
\end{aligned}
\end{equation}
where
\begin{equation}\label{L-cost}
	\begin{aligned}
		J_{L}(m,i;u_{2}(\cdot))\triangleq\mathbb{E}
		\int_{0}^{T}
		&\left[
		\left<
		\left(
		\begin{matrix}
			G(s,\alpha(s)) & S_{1}(s,\alpha(s))^{\top} & S_{2}(s,\alpha(s))^{\top} \\
			S_{1}(s,\alpha(s)) & T_{11}(s,\alpha(s)) & T_{12}(s,\alpha(s)) \\
			S_{2}(s,\alpha(s)) & T_{21}(s,\alpha(s)) & T_{22}(s,\alpha(s))
		\end{matrix}
		\right)
		\left(
		\begin{matrix}
			Y(s) \\
			Z(s) \\
			u_{2}(s)
		\end{matrix}
		\right),
		\left(\begin{matrix}
			Y(s) \\
			Z(s) \\
			u_{2}(s)
		\end{matrix}
		\right)
		\right>
		\right.\\
		&\left.+2\left<
		\left(
		\begin{matrix}
			q(s) \\
			\rho_{1}(s) \\
			\rho_{2}(s)
		\end{matrix}
		\right),
		\left(
		\begin{matrix}
			Y(s) \\
			Z(s) \\
			u_{2}(s)
		\end{matrix}
		\right)
		\right>
		\right] ds-2\langle Y(0),x \rangle.
	\end{aligned}
\end{equation}

Note that $J(x,i,\bar{u}_{1}[x,i,u_{2}](\cdot),u_{2}(\cdot))$ is independent of the forward state process $\bar{X}(\cdot)$ and all terms affected by $u_{2}(\cdot)$ are removed in $J_{L}(m,i;u_{2}(\cdot))$. Therefore, finding an optimal control to maximize the performance functional $J(x,i,\bar{u}_{1}[x,i,u_{2}](\cdot),u_{2}(\cdot))$ is equivalent to finding an optimal control to minimize the cost functional $J_{L}(m,i;u_{2}(\cdot))$ with state constraint \eqref{L-state}. Consequently, the leader's problem becomes a backward stochastic linear-quadratic problem of the Markovian regime-switching system (M-BSLQ problem, for short), which can be defined as follows.

\textbf{Problem (M-ZLQ-L)} For any given $(m,i)\in L_{\mathcal{F}_{T}}^{2}(\Omega;\mathbb{R}^{n})\times\mathcal{S}$, find an optimal control $u_{2}^{*}(\cdot)\in\mathcal{U}_{2}$ such that
\begin{equation}\label{L-value}
    J_{L}(m,i;u_{2}^{*}(\cdot))=\inf_{u_{2}(\cdot)\in\mathcal{U}_{2}} J_{L}(m,i;u_{2}(\cdot))\triangleq V_{L}(m,i).
\end{equation}
In the above, we denote $V_{L}(m,i)$ as the value function of the Problem (M-ZLQ-L). If $\widehat{f}(\cdot),\,q(\cdot),\, \rho_{1}(\cdot),\, \rho_{2}(\cdot),\, x=0$, then the corresponding cost functional,  value function and problem are denoted by $J_{L}^{0}\left(m,i; u_{2}(\cdot)\right)$, $V_{L}^{0}(m,i)$ and Problem (M-ZLQ-L)$^{0}$, respectively.

Obviously, we have $\widehat{f}(\cdot),\,q(\cdot),\, \rho_{1}(\cdot),\, \rho_{2}(\cdot)=0$ if $b(\cdot),\,\sigma(\cdot)=0$, and additionally, for any given $i \in \mathcal{S}$, the following holds:
	\begin{equation*}
		\begin{aligned}
			&\widehat{A}(\cdot,i),\text{ } \widehat{C}(\cdot,i) \in L^{\infty}(0,T;\mathbb{R}^{n \times n}),\quad
			\widehat{H}(\cdot,i) \in L^{\infty}(0,T;\mathbb{R}^{n \times m_{2}})\\
			& \left(\begin{array}{ccc}
				G(\cdot,i) & S_{1}(\cdot,i)^{\top} & S_{2}(\cdot,i)^{\top} \\
				S_{1}(\cdot,i) & T_{11}(\cdot,i) & T_{12}(\cdot,i) \\
				S_{2}(\cdot,i) & T_{21}(\cdot,i) & T_{22}(\cdot,i)
			\end{array}
			\right)\in L^{\infty}(0,T;\mathbb{S}^{2n+m_{2}}),\\
			&\widehat{f}(\cdot),\text{ } q(\cdot),\text{ } \rho_{1}(\cdot)\in L_{\mathbb{F}}^{2}(0,T;\mathbb{R}^{n}),\quad
			\rho_{2}(\cdot)\in L_{\mathbb{F}}^{2}(0,T;\mathbb{R}^{m_{2}}).
		\end{aligned}
	\end{equation*}

\subsection{Open-loop solvability}\label{subsection-open-solvability}

The following result characterizes the open-loop solvability of the Problem (M-ZLQ-L) in terms FBSDE.
\begin{theorem}\label{L-openloop-solvability}
Suppose that the assumptions (H1)-(H3) hold. An element $u_{2}^{*}(\cdot)\in \mathcal{U}_{2}$ is the optimal control for the leader if and only if:
\begin{enumerate}
	\item The following convex condition holds:
	\begin{equation}\label{L-convex}
		J_{L}^{0}(0,i;u_{2}(\cdot))\geq 0,\quad \forall u_{2}(\cdot)\in \mathcal{U}_{2};
	\end{equation}
	\item The following stationary condition holds:
	\begin{equation}\label{L-stationary}
		\begin{aligned}
			-\widehat{H}(s,\alpha(s))^{\top}&\phi^{*}(s)+S_{2}(s,\alpha(s))Y^{*}(s)+T_{21}(s,\alpha(s))Z^{*}(s)+T_{22}(s,\alpha(s))u_{2}^{*}(s)+\rho_{2}(s)=0,
		\end{aligned}
	\end{equation}
	where $\left(\phi^{*}(\cdot),Y^{*}(\cdot),Z^{*}(\cdot),\Gamma^{*}(\cdot)\right)$ is the solution of  FBSDE:
	\begin{equation}\label{L-FBSDE}
		\left\{
		\begin{aligned}
			d\phi^{*}(s)&=\big[-\widehat{A}(s,\alpha(s))^{\top}\phi^{*}(s)+G(s,\alpha(s))Y^{*}(s)+S_{1}(s,\alpha(s))^{\top}Z^{*}(s)\\
			&\quad+S_{2}(s,\alpha(s))^{\top}u_{2}^{*}(s)+q(s)\big]ds+\big[-\widehat{C}(s,\alpha(s))^{\top}\phi^{*}(s)+S_{1}(s,\alpha(s))Y^{*}(s)\\
			&\quad+T_{11}(s,\alpha(s))Z^{*}(s)+T_{12}(s,\alpha(s))u_{2}^{*}(s)+\rho_{1}(s)\big]dW(s)\\
			dY^{*}(s)&=\big[\widehat{A}(s,\alpha(s))Y^{*}(s)+\widehat{C}(s,\alpha(s))Z^{*}(s)+\widehat{H}(s,\alpha(s))u_{2}^{*}(s)
			+\widehat{f}(s)\big]ds\\
			&\quad+Z^{*}(s)dW(s)+\mathbf{\Gamma}^{*}(s)\cdot d\mathbf{\widetilde{N}}(s),\\
			\phi^{*}(0)&=-x,\quad \alpha(0)=i,\quad Y^{*}(T)=m.
		\end{aligned}
		\right.
	\end{equation}
\end{enumerate}
\end{theorem}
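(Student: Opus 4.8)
The plan is to treat Problem (M-ZLQ-L) as the minimization of a quadratic functional over the Hilbert space $\mathcal{U}_{2}$ and to characterize the minimizer by a first-order variational analysis. First I would note that, since the state equation \eqref{L-state} is a linear BSDE in which $u_{2}$ enters affinely, the map $u_{2}\mapsto (Y,Z,\mathbf{\Gamma})$ is affine and hence $J_{L}(m,i;u_{2})$ is a quadratic functional of $u_{2}$. Fixing a candidate $u_{2}^{*}$ with associated state $(Y^{*},Z^{*},\mathbf{\Gamma}^{*})$ and writing $u_{2}=u_{2}^{*}+\epsilon v$ for $v\in\mathcal{U}_{2}$, the state decomposes as $(Y^{*}+\epsilon\,\delta Y,\,Z^{*}+\epsilon\,\delta Z,\,\mathbf{\Gamma}^{*}+\epsilon\,\delta\mathbf{\Gamma})$, where $(\delta Y,\delta Z,\delta\mathbf{\Gamma})$ solves the homogeneous variational BSDE, i.e. \eqref{L-state} with $\widehat{f}=0$, terminal value $0$, and control $v$. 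Expanding and using the symmetry of the coefficient matrix in \eqref{L-cost}, one obtains the exact identity
\begin{equation*}
J_{L}(m,i;u_{2}^{*}+\epsilon v)=J_{L}(m,i;u_{2}^{*})+2\epsilon\,\mathcal{L}(v)+\epsilon^{2}J_{L}^{0}(0,i;v),
\end{equation*}
where $\mathcal{L}(v)$ is the first-order (linear) functional collecting all the cross terms together with $-\langle\delta Y(0),x\rangle$, and the purely quadratic part coincides with $J_{L}^{0}(0,i;v)$ because the homogeneous variational state is exactly the state appearing in Problem (M-ZLQ-L)$^{0}$.

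From this exact expansion the equivalence follows along standard convex-analytic lines, and I would prove both implications at once. Sufficiency: if \eqref{L-convex} holds and $\mathcal{L}(v)=0$ for every $v$, then taking $\epsilon=1$ gives $J_{L}(m,i;u_{2}^{*}+v)-J_{L}(m,i;u_{2}^{*})=J_{L}^{0}(0,i;v)\geq 0$, so $u_{2}^{*}$ is optimal. Necessity: if $u_{2}^{*}$ is optimal, then differentiating the (exact, quadratic) map $\epsilon\mapsto J_{L}(m,i;u_{2}^{*}+\epsilon v)$ at $\epsilon=0$ forces $\mathcal{L}(v)=0$ for all $v$, after which $J_{L}(m,i;u_{2}^{*}+v)-J_{L}(m,i;u_{2}^{*})=J_{L}^{0}(0,i;v)$ must be nonnegative for all $v\in\mathcal{U}_{2}$, which is precisely \eqref{L-convex}. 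Thus the entire content of the theorem reduces to showing that the stationarity equation \eqref{L-stationary} is equivalent to $\mathcal{L}(v)=0$ for all $v$.

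The core step is to rewrite $\mathcal{L}(v)$ through a duality (integration-by-parts) argument. I would introduce the adjoint process $\phi^{*}$ as the forward SDE in \eqref{L-FBSDE} with $\phi^{*}(0)=-x$; note the reversal of roles relative to forward LQ theory, since here the controlled state $Y^{*}$ is backward while its adjoint $\phi^{*}$ is forward, and the initial value $\phi^{*}(0)=-x$ is chosen precisely to absorb the term $-\langle\delta Y(0),x\rangle$. Applying It\^o's formula to $\langle\phi^{*}(s),\delta Y(s)\rangle$, integrating over $[0,T]$ and taking expectation, the boundary contribution equals $\langle x,\delta Y(0)\rangle$ (using $\delta Y(T)=0$ and $\phi^{*}(0)=-x$), which cancels the $-\langle\delta Y(0),x\rangle$ in $\mathcal{L}(v)$; the drift pairings involving $\widehat{A}$ and the diffusion pairings involving $\widehat{C}$ cancel against the corresponding terms generated by the two equations; and the surviving terms reorganize $\mathcal{L}(v)$ into $\mathbb{E}\int_{0}^{T}\langle -\widehat{H}^{\top}\phi^{*}+S_{2}Y^{*}+T_{21}Z^{*}+T_{22}u_{2}^{*}+\rho_{2},\,v\rangle\,ds$. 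A point requiring care is that $\phi^{*}$ carries no jump martingale term, so in the product rule the covariation of $\phi^{*}$ with the compensated jump part $\delta\mathbf{\Gamma}\cdot d\mathbf{\widetilde{N}}$ of $\delta Y$ vanishes by orthogonality of $W$ and $\mathbf{\widetilde{N}}$; this is exactly why the adjoint equation in \eqref{L-FBSDE} needs only a Brownian diffusion term. Since $v$ ranges over all of $\mathcal{U}_{2}=L_{\mathbb{F}}^{2}(0,T;\mathbb{R}^{m_{2}})$, vanishing of $\mathcal{L}$ is equivalent, by a standard density argument, to the integrand being zero $ds\otimes d\mathbb{P}$-a.e., i.e. to \eqref{L-stationary}.

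Finally I would record the well-posedness of \eqref{L-FBSDE}: for a fixed $u_{2}^{*}$ the system is in fact decoupled, since the linear BSDE for $(Y^{*},Z^{*},\mathbf{\Gamma}^{*})$ is uniquely solvable under (H1) by the regime-switching BSDE theory, and then $\phi^{*}$ is the unique strong solution of a linear forward SDE with the now-known data $(Y^{*},Z^{*},u_{2}^{*})$, so the objects in the statement are well defined. I expect the main obstacle to be the computational bookkeeping in the It\^o/duality step: one must verify that every matrix block $G,S_{1},S_{2},T_{11},T_{12},T_{21},T_{22}$ produced by the adjoint dynamics matches the corresponding block of the coefficient matrix in \eqref{L-cost} after the $\widehat{A}$- and $\widehat{C}$-cancellations, and that the regime-switching jump terms are handled correctly throughout.
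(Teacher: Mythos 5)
Your proposal is correct and takes essentially the same route as the paper's own proof: an exact quadratic expansion $J_{L}(m,i;u_{2}^{*}+\varepsilon v)=J_{L}(m,i;u_{2}^{*})+2\varepsilon\mathcal{L}(v)+\varepsilon^{2}J_{L}^{0}(0,i;v)$ via the homogeneous variational BSDE, followed by It\^o duality applied to $\langle \phi^{*},\delta Y\rangle$ to identify the first-order term with the stationarity expression \eqref{L-stationary}, and then the standard quadratic-function argument in $\varepsilon$. Your explicit remarks on the vanishing covariation between $\phi^{*}$ and the compensated jump part, and on the decoupled well-posedness of \eqref{L-FBSDE}, are points the paper leaves implicit but are consistent with its argument.
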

\begin{proof}
Clearly, $u_{2}^{*}(\cdot)\in \mathcal{U}_{2}$ is an optimal control for leader if and only if for any given $u_{2}(\cdot)\in \mathcal{U}_{2}$,
	\begin{equation}\label{lem2-1}
		J_{L}(m,i;u_{2}^{*}(\cdot)+\varepsilon u_{2}(\cdot))\geq J_{L}(m,i;u_{2}^{*}(\cdot)),
		\quad \forall \varepsilon \in \mathbb{R}.
	\end{equation}
	Let $\left(Y_{0}^{u_{2}},Z_{0}^{u_{2}},\Gamma_{0}^{u_{2}}\right)$ be the solution of the following BSDE:
	\begin{equation}\label{L-state0}
		\left\{
		\begin{aligned}
			dY_{0}^{u_{2}}(s)&=\big[\widehat{A}(s,\alpha(s))Y_{0}^{u_{2}}(s)+\widehat{C}(s,\alpha(s))Z_{0}^{u_{2}}(s)
			+\widehat{H}(s,\alpha(s))u_{2}(s)\big]ds\\
			&\quad+Z_{0}^{u_{2}}(s) dW(s)+\mathbf{\Gamma}_{0}^{u_{2}}(s)\cdot d\mathbf{\widetilde{N}}(s), \quad s \in[0, T], \\
			Y(T)&=0.
		\end{aligned}
		\right.
	\end{equation}
	Then we have
	\begin{equation}\label{lem2-2}
		\begin{aligned}
			&J_{L}(m,i;u_{2}^{*}(\cdot)+\varepsilon u_{2}(\cdot))-J_{L}(m,i;u_{2}^{*}(\cdot))\\
			=&2\varepsilon \mathbb{E}\left\{\int_{0}^{T}
			\left[
			\left<
			\left(
			\begin{array}{lll}
				G(s,\alpha(s)) & S_{1}(s,\alpha(s))^{\top} & S_{2}(s,\alpha(s))^{\top} \\
				S_{1}(s,\alpha(s)) & T_{11}(s,\alpha(s)) & T_{12}(s,\alpha(s)) \\
				S_{2}(s,\alpha(s)) & T_{21}(s,\alpha(s)) & T_{22}(s,\alpha(s))
			\end{array}
			\right)
			\left(
			\begin{array}{c}
				Y^{*}(s) \\
				Z^{*}(s) \\
				u_{2}^{*}(s)
			\end{array}
			\right),
			\left(\begin{array}{c}
				Y_{0}^{u_{2}}(s) \\
				Z_{0}^{u_{2}}(s) \\
				u_{2}(s)
			\end{array}
			\right)
			\right>\right.\right.\\
			&\left.\left.+\left<
			\left(
			\begin{array}{c}
				q(s) \\
				\rho_{1}(s) \\
				\rho_{2}(s)
			\end{array}
			\right),
			\left(\begin{array}{c}
				Y_{0}^{u_{2}}(s) \\
				Z_{0}^{u_{2}}(s) \\
				u_{2}(s)
			\end{array}
			\right)
			\right>\right]ds
			-\langle Y_{0}^{u_{2}}(0),x\rangle
			\right\}+\varepsilon^{2}J_{L}^{0}(0,i;u_{2}(\cdot)).
		\end{aligned}
	\end{equation}
	Applying It\^o's rule  to $\langle Y_{0}^{u_{2}}(s),\phi^{*}(s)\rangle$, one can further simplify the above equation as follows
	\begin{equation}\label{lem2-3}
		\begin{aligned}
			&J_{L}(m,i;u_{2}^{*}(\cdot)+\varepsilon u_{2}(\cdot))-J_{L}(t,m,i;u_{2}^{*}(\cdot))\\
			=& 2\varepsilon \mathbb{E}\int_{0}^{T}\bigg\{
			\big<-\widehat{H}(s)^{\top}\phi^{*}(s)+S_{2}(s)Y^{*}(s)
			+T_{21}(s)Z^{*}(s)+T_{22}(s)u_{2}^{*}(s)+\rho_{2}(s),u_{2}(s)\big>
			\bigg\}ds\\
			& +\varepsilon^{2}J_{L}^{0}(0,i;u_{2}^{*}(\cdot)),\quad \forall u_{2}(\cdot)\in\mathcal{U}_{2},\quad \forall\varepsilon\in\mathbb{R}.
		\end{aligned}
	\end{equation}
	Therefore, equation \eqref{lem2-1} holds if and only if the convex condition \eqref{L-convex}  and the  stationary condition \eqref{L-stationary} hold. This completes the proof.
	\end{proof}

	The equation \eqref{L-FBSDE}, together with the stationary condition \eqref{L-stationary}, constitute a coupled FBSDE system, which is referred to as the optimality system for the Problem (M-ZLQ-L).

	Although the above theorem provides a necessary and sufficient condition for open-loop solvability of Problem (M-ZLQ-L), it isn't easy to verify. Therefore, we will focus on identifying an easily verifiable sufficient condition that guarantees the unique solvability of Problem (M-ZLQ-L). To this end, we rewrite the leader's cost functional from the Hilbert space point of view.

	Denote $\left(Y^{u_{2}}(\cdot;m),Z^{u_{2}}(\cdot;m),\Gamma^{u_{2}}(\cdot;m)\right)$ as the solution of equation \eqref{L-state} and $\left(Y_{0}^{u_{2}}(\cdot;m),Z_{0}^{u_{2}}(\cdot;m),\Gamma_{0}^{u_{2}}(\cdot;m)\right)$ as the corresponding homogeneous solution when $\widehat{f}(\cdot)=0$. Then,  based on the linearity of BSDE \eqref{L-state}, one can verify that the solution to BSDE \eqref{L-state} admits the following decomposition:
	$$
	\left\{
	\begin{aligned}
		Y^{u_{2}}(\cdot;m)&=Y_{0}^{u_{2}}(\cdot;0)+Y_{0}^{0}(\cdot;m)+Y^{0}(\cdot;0),\\
		Z^{u_{2}}(\cdot;m)&=Z_{0}^{u_{2}}(\cdot;0)+Z_{0}^{0}(\cdot;m)+Z^{0}(\cdot;0),\\
		\mathbf{\Gamma}^{u_{2}}(\cdot;m)&=\mathbf{\Gamma}_{0}^{u_{2}}(\cdot;0)+\mathbf{\Gamma}_{0}^{0}(\cdot;m)+\mathbf{\Gamma}^{0}(\cdot;0).
	\end{aligned}
	\right.$$
	With the above notations, we define the following bounded linear operators:
	$$
	\begin{array}{ll}
		\mathcal{D}_{0}^{\alpha}:\text{ }\mathcal{U}_{2}\text{ }\rightarrow\text{ } L_{\mathcal{F}_{0}}^{2}(\Omega;\mathbb{R}^{n}),
		&\mathcal{D}_{0}^{\alpha}u_{2}\triangleq Y_{0}^{u_{2}}(0;0),\\
		\mathcal{D}_{1}^{\alpha}:\text{ }\mathcal{U}_{2}\text{ }\rightarrow\text{ } \mathcal{S}_{\mathbb{F}}^{2}(0,T;\mathbb{R}^{n}),
		&\mathcal{D}_{1}^{\alpha}u_{2}\triangleq Y_{0}^{u_{2}}(\cdot;0),\\
		\mathcal{D}_{2}^{\alpha}:\text{ }\mathcal{U}_{2}\text{ }\rightarrow\text{ }
		L_{\mathbb{F}}^{2}(0,T;\mathbb{R}^{n}),
		&\mathcal{D}_{2}^{\alpha}u_{2}\triangleq Z_{0}^{u_{2}}(\cdot;0),\\
		\mathcal{M}_{0}^{\alpha}:\text{ }L_{\mathcal{F}_{T}}^{2}(\Omega;\mathbb{R}^{n})\text{ }\rightarrow\text{ } L_{\mathcal{F}_{0}}^{2}(\Omega;\mathbb{R}^{n}),
		&\mathcal{M}_{0}^{\alpha}m\triangleq Y_{0}^{0}(0;m),\\
		\mathcal{M}_{1}^{\alpha}:\text{ }L_{\mathcal{F}_{T}}^{2}(\Omega;\mathbb{R}^{n})\text{ }\rightarrow\text{ } \mathcal{S}_{\mathbb{F}}^{2}(0,T;\mathbb{R}^{n}),
		&\mathcal{M}_{1}^{\alpha}m\triangleq Y_{0}^{0}(\cdot;m),\\
		\mathcal{M}_{2}^{\alpha}:\text{ }L_{\mathcal{F}_{T}}^{2}(\Omega;\mathbb{R}^{n})\text{ }\rightarrow\text{ } L_{\mathbb{F}}^{2}(0,T;\mathbb{R}^{n}),
		&\mathcal{M}_{2}^{\alpha}m\triangleq Z_{0}^{0}(\cdot;m),
	\end{array}
	$$
	and
	\begin{align*}
	 &K^{\alpha}\triangleq \left(
		\begin{matrix}
			G(\cdot,\alpha(\cdot)) & S_{1}(\cdot,\alpha(\cdot))^{\top} & S_{2}(\cdot,\alpha(\cdot))^{\top}\\
			S_{1}(\cdot,\alpha(\cdot)) & T_{11}(\cdot,\alpha(\cdot)) & T_{12}(\cdot,\alpha(\cdot))\\
			S_{2}(\cdot,\alpha(\cdot)) & T_{21}(\cdot,\alpha(\cdot)) & T_{22}(\cdot,\alpha(\cdot))
		\end{matrix}
		\right),
		\quad \xi\triangleq \left(
		\begin{array}{c}
			q(\cdot) \\
			\rho_{1}(\cdot) \\
			\rho_{2}(\cdot)
		\end{array}
		\right),
		\quad c \triangleq Y^{0}(0;0),\\
		&\mathcal{C}^{\alpha}\triangleq
		\left(
		\begin{array}{c}
			Y^{0}(\cdot;0)\\
			Z^{0}(\cdot;0)\\
			0
		\end{array}
		\right),
		\quad \mathcal{D}^{\alpha}\triangleq
		\left(
		\begin{array}{c}
			\mathcal{D}_{1}^{\alpha}\\
			\mathcal{D}_{2}^{\alpha}\\
			\mathcal{I}
		\end{array}
		\right),
		\quad \mathcal{M}^{\alpha}\triangleq
		\left(
		\begin{array}{c}
			\mathcal{M}_{1}^{\alpha}\\
			\mathcal{M}_{2}^{\alpha}\\
			0
		\end{array}
		\right),
	\end{align*}
	where $\mathcal{I}$ represents the identity operator.

	Now, let $\mathcal{A}^{*}$ be the adjoint operator of a linear operator $\mathcal{A}$. Then the cost functional of leader \eqref{L-cost}  can be rewritten  as follows:
	\begin{equation}\label{L-cost-hilbet}
		\begin{aligned}
			J_{L}\left(m,i;u_{2}(\cdot)\right)&=
			\big<K^{\alpha}
			\left(\mathcal{D}^{\alpha}u_{2}+\mathcal{M}^{\alpha}m+\mathcal{C}^{\alpha}\right),
			\mathcal{D}^{\alpha}u_{2}+\mathcal{M}^{\alpha}m+\mathcal{C}^{\alpha}\big>
			+2\big<\xi,\mathcal{D}^{\alpha}u_{2}+\mathcal{M}^{\alpha}m+\mathcal{C}^{\alpha}\big>\\
			&\quad-2\big<\mathcal{D}_{0}^{\alpha}u_{2}+\mathcal{M}_{0}^{\alpha}m+c,x\big>\\
			&=\left<\mathcal{R}_{L}^{\alpha}u_{2},u_{2}\right>+2\left<\mathcal{N}_{L}^{\alpha}m,u_{2}\right>
			+\left<\mathcal{L}_{L}^{\alpha}m,m\right>
			+2\left<\upsilon_{L}^{\alpha},u_{2}\right>
			+2\left<\omega_{L}^{\alpha},m\right>+c_{L}^{\alpha},
		\end{aligned}
	\end{equation}
	where
	$$
	\left\{
	\begin{aligned}
		&\mathcal{R}_{L}^{\alpha}=\left(\mathcal{D}^{\alpha}\right)^{*}K^{\alpha}\mathcal{D}^{\alpha},
		\quad
		\mathcal{N}_{L}^{\alpha}=\left(\mathcal{D}^{\alpha}\right)^{*}K^{\alpha}\mathcal{M}^{\alpha},
		\quad
		\mathcal{L}_{L}^{\alpha}=\left(\mathcal{M}^{\alpha}\right)^{*}K^{\alpha}\mathcal{M}^{\alpha},\\
		&\upsilon_{L}^{\alpha}=\left(\mathcal{D}^{\alpha}\right)^{*}\left(K^{\alpha}\mathcal{C}^{\alpha}
		+\xi\right)-\left(\mathcal{D}_{0}^{\alpha}\right)^{*}x,
		\quad
		\omega_{L}^{\alpha}=\left(\mathcal{M}^{\alpha}\right)^{*}\left(K^{\alpha}\mathcal{C}^{\alpha}+\xi\right)-\left(\mathcal{M}_{0}^{\alpha}\right)^{*}x,\\
		&
		c_{L}^{\alpha}=\big<K^{\alpha}\mathcal{C}^{\alpha},\mathcal{C}^{\alpha}\big>
		+2\big<\xi,\mathcal{C}^{\alpha}\big>-2\big<c,x\big>.
	\end{aligned}
	\right.$$

To guarantee the optimality system for Problem (M-ZLQ-L) exists a unique solution, we introduce the following assumption:

\textbf{(H4)} There exists a constant $\lambda_{L}>0$ such that
	\begin{equation}\label{J-uni-concave}
		J^{0}(0,i;0,u_{2})\leq -\lambda_{L} \mathbb{E} \int_{0}^{T}|u_{2}(s)|^{2}ds, \qquad  \forall u_{2}(\cdot)\in\mathcal{U}_{2},\quad \forall i\in\mathcal{S}.
	\end{equation}
	The equation \eqref{J-uni-concave} is called the uniformly concave condition for Problem (M-ZLQ), which can ensure that the Problem (M-ZLQ-L) admits a unique optimal control.

	\begin{proposition}\label{prop-L-open-solvability}
		Let assumptions (H1)-(H4) hold, then Problem (M-ZLQ-L) admits a unique optimal control. Consequently, the FBSDE \eqref{L-FBSDE}  admits a unique adapted solution $\left(\phi^{*}(\cdot),Y^{*}(\cdot),Z^{*}(\cdot),\Gamma^{*}(\cdot)\right)$ satisfying the stationary condition \eqref{L-stationary}.
	\end{proposition}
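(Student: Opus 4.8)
The plan is to work entirely within the Hilbert space reformulation \eqref{L-cost-hilbet} and reduce the unique solvability of Problem (M-ZLQ-L) to the coercivity of the operator $\mathcal{R}_L^\alpha = (\mathcal{D}^\alpha)^* K^\alpha \mathcal{D}^\alpha$ on $\mathcal{U}_2$. Since $K^\alpha$ is multiplication by the symmetric block matrix appearing in \eqref{L-cost}, the operator $\mathcal{R}_L^\alpha$ is bounded and self-adjoint, and $u_2 \mapsto J_L(m,i;u_2)$ is a quadratic functional whose leading term is $\langle \mathcal{R}_L^\alpha u_2, u_2\rangle$. Once I establish $\mathcal{R}_L^\alpha \geq \lambda_L \mathcal{I}$ for the constant $\lambda_L$ of (H4), standard convex analysis in Hilbert space will deliver a unique minimizer.

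The crux is to convert the uniform concavity condition (H4), which is phrased for the original game cost $J^0$, into coercivity of $\mathcal{R}_L^\alpha$. First I would observe that $J_L^0(0,i;u_2) = \langle \mathcal{R}_L^\alpha u_2, u_2\rangle$, obtained by setting $m=0$ and the inhomogeneous data $\widehat f, q, \rho_1, \rho_2, x$ to zero in \eqref{L-cost-hilbet} so that $\mathcal{M}^\alpha m$, $\mathcal{C}^\alpha$ and $\xi$ all vanish. Next, specializing the reduction identity \eqref{costfunction2} to $x=0$, $b=\sigma=0$, $m=0$ gives $J^0(0,i,\bar u_1[0,i,u_2],u_2) = -J_L^0(0,i;u_2)$. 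Because $\bar u_1[0,i,u_2]$ minimizes $u_1 \mapsto J^0(0,i;u_1,u_2)$, its optimized value is dominated by the value at the particular choice $u_1=0$, so that $J^0(0,i,\bar u_1[0,i,u_2],u_2) \leq J^0(0,i;0,u_2) \leq -\lambda_L \mathbb{E}\int_0^T |u_2(s)|^2 ds$ by (H4). Combining these yields $\langle \mathcal{R}_L^\alpha u_2, u_2\rangle = J_L^0(0,i;u_2) \geq \lambda_L \mathbb{E}\int_0^T |u_2(s)|^2 ds$, the desired coercivity. I expect this translation — in particular the clean bypass of (H3) via the elementary inequality $\min_{u_1} J^0 \leq J^0|_{u_1=0}$ — to be the main conceptual step, the rest being routine.

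With coercivity in hand I would conclude the first assertion as follows. The self-adjoint, bounded, coercive operator $\mathcal{R}_L^\alpha$ is boundedly invertible (Lax–Milgram), so the functional in \eqref{L-cost-hilbet} is uniformly convex and admits the unique minimizer $u_2^* = -(\mathcal{R}_L^\alpha)^{-1}(\mathcal{N}_L^\alpha m + \upsilon_L^\alpha)$, characterized by the first-order condition $\mathcal{R}_L^\alpha u_2^* + \mathcal{N}_L^\alpha m + \upsilon_L^\alpha = 0$. This is precisely the unique optimal control for the leader.

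Finally, for the \emph{consequently} clause I would invoke Theorem \ref{L-openloop-solvability}. Coercivity gives $J_L^0(0,i;u_2) \geq \lambda_L \mathbb{E}\int_0^T|u_2(s)|^2 ds \geq 0$, so the convexity condition \eqref{L-convex} holds automatically; hence the unique optimal $u_2^*$ satisfies the stationary condition \eqref{L-stationary} together with an adapted solution of the FBSDE \eqref{L-FBSDE}. Uniqueness of the full tuple then follows by decoupling: once $u_2^*$ is fixed, the backward equation for $(Y^*,Z^*,\Gamma^*)$ in \eqref{L-FBSDE} is a standard linear BSDE with a unique adapted solution, and the forward equation for $\phi^*$ with initial datum $\phi^*(0)=-x$ is then a linear SDE with a unique solution; since $u_2^*$ is itself unique, so is $(\phi^*, Y^*, Z^*, \Gamma^*)$.
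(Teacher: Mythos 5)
Your proposal is correct and takes essentially the same route as the paper's proof: both translate (H4) into the coercivity $J_{L}^{0}(0,i;u_{2}(\cdot))\geq \lambda_{L}\,\mathbb{E}\int_{0}^{T}|u_{2}(s)|^{2}\,ds$ via the identity \eqref{eq:J0JL0} together with the comparison $J^{0}(0,i;\bar{u}_{1}[0,i,u_{2}](\cdot),u_{2}(\cdot))\leq J^{0}(0,i;0,u_{2}(\cdot))$, and then conclude by convex analysis applied to the Hilbert-space representation \eqref{L-cost-hilbet} followed by Theorem \ref{L-openloop-solvability}, your explicit decoupling argument for uniqueness of the FBSDE solution merely spelling out what the paper leaves implicit. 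One minor caveat: (H3) is not genuinely bypassed, since it is what guarantees the follower's rational reaction $\bar{u}_{1}[0,i,u_{2}](\cdot)$ exists and the reduction identity holds, but as (H3) is among the standing assumptions this does not affect correctness.
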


	\begin{proof}
			Let $\bar{u}_{1}[0,i,u_{2}](\cdot)$ be the rational control of the follower for Problem (M-ZLQ-F)$^{0}$. Then, we have 
			\begin{equation}\label{eq:J0JL0}
				-J_{L}^{0}(0,i;u_{2}(\cdot))=J^{0}(0,i;\bar{u}_{1}[0,i,u_{2}](\cdot),u_{2}(\cdot)).
			\end{equation}
			Since
			$$ J^{0}(0,i;\bar{u}_{1}[0,i,u_{2}](\cdot),u_{2}(\cdot))\leq J^{0}(0,i;u_{1}(\cdot),u_{2}(\cdot)), \qquad \forall u_{1}(\cdot)\in \mathcal{U}_{1},\quad a.s.,$$
			 taking $u_{1}(\cdot)=0$ and combing the equation \eqref{eq:J0JL0} lead to 
			\begin{equation*}
				\begin{aligned}
					J_{L}^{0}(0,i;u_{2})&=-J^{0}(0,i;\bar{u}_{1}[0,i,u_{2}](\cdot),u_{2}(\cdot))\geq -J^{0}(0,i;0,u_{2}(\cdot))
					\geq \lambda_{L} \mathbb{E} \int_{0}^{T}|u_{2}(s)|^{2}ds, \qquad  \forall u_{2}(\cdot)\in\mathcal{U}_{2}.
				\end{aligned}
			\end{equation*}
		Combining with the representation \eqref{L-cost-hilbet}, we can see that the map $u_{2}(\cdot) \mapsto J_{L}(m,i;u_{2}(\cdot))$ is strictly convex and
		$$J_{L}(m,i;u_{2}(\cdot))\rightarrow\infty\text{ as }\mathbb{E}\int_{0}^{T}|u_{2}(s)|^{2}ds\rightarrow\infty.$$
		Therefore, the unique solvability of Problem (M-ZLQ-L) for any given $(m,i)\in L_{\mathcal{F}_{T}}^{2}(\Omega;\mathbb{R}^{n})\times\mathcal{S}$ follows from the fundamental theorem of convex analysis. Furthermore, combining with Theorem \ref{L-openloop-solvability}, we obtain that the optimality system \eqref{L-FBSDE} admits a unique solution satisfying the stationary condition \eqref{L-stationary}.
	\end{proof}
Note that Theorem \ref{thm-M-SG-F} and Proposition \ref{prop-L-open-solvability} imply that the Problem (M-ZLQ) admits a unique Stackelberg equilibrium under the assumptions (H1)-(H4). The following lemma further shows that the weighting matrix $T_{22}(\cdot,\cdot)$ in \eqref{L-cost} is a uniform positive-definite function under the assumptions (H1)-(H4). As we can see in the next section, such a result plays a crucial role in constructing the explicit optimal control for the leader.

\begin{lemma}\label{L-lem}
	Suppose assumptions (H1)-(H4) hold, then $T_{22}(\cdot,\cdot) \gg 0$.
\end{lemma}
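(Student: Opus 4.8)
The plan is to read off $T_{22}$ as the \emph{local} weight attached to $u_2$ in the homogeneous leader cost $J_L^0(0,i;\cdot)$ and to exploit the uniform lower bound already obtained in the proof of Proposition \ref{prop-L-open-solvability}, namely
$$J_L^0(0,i;u_2(\cdot))\geq\lambda_L\,\mathbb{E}\int_0^T|u_2(s)|^2\,ds,\qquad\forall u_2(\cdot)\in\mathcal{U}_2.$$
The idea is a spike (localization) variation: for a fixed state $j\in\mathcal{S}$, a vector $v\in\mathbb{R}^{m_2}$, a time $s_0\in(0,T)$ and small $\varepsilon>0$, I would test the above inequality against the admissible control $u_2^{\varepsilon}(s):=\mathbf{1}_{[s_0,s_0+\varepsilon]}(s)\,\mathbf{1}_{\{\alpha(s_0)=j\}}\,v$. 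This is admissible because $\{\alpha(s_0)=j\}\in\mathcal{F}_{s_0}\subseteq\mathcal{F}_s$ for $s\geq s_0$, and the factor $\mathbf{1}_{\{\alpha(s_0)=j\}}$ is precisely what isolates a \emph{single} regime $j$ rather than a probability-weighted average over regimes.

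First I would insert $u_2^{\varepsilon}$ into the homogeneous BSDE \eqref{L-state0} and record the standard a priori estimates. Since $u_2^{\varepsilon}$ is supported on $[s_0,s_0+\varepsilon]$ and the terminal value is zero, one gets $(Y_0^{u_2^{\varepsilon}},Z_0^{u_2^{\varepsilon}})\equiv 0$ on $(s_0+\varepsilon,T]$ and, by the BSDE moment estimate applied on $[s_0,s_0+\varepsilon]$ and then propagated back on $[0,s_0]$,
$$\mathbb{E}\int_0^T|Y_0^{u_2^{\varepsilon}}(s)|^2\,ds\leq C\varepsilon^2,\qquad \mathbb{E}\int_0^T|Z_0^{u_2^{\varepsilon}}(s)|^2\,ds\leq C\varepsilon^2,$$
whereas $\mathbb{E}\int_0^T|u_2^{\varepsilon}(s)|^2\,ds=\varepsilon\,\mathbb{P}(\alpha(s_0)=j)\,|v|^2$ is only of order $\varepsilon$. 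Expanding $J_L^0(0,i;u_2^{\varepsilon})$ through the block form of $K^{\alpha}$ in \eqref{L-cost} and bounding every cross term by Cauchy--Schwarz, each term involving $Y_0^{u_2^{\varepsilon}}$ or $Z_0^{u_2^{\varepsilon}}$ is $O(\varepsilon^{3/2})$ or smaller, so the only surviving contribution of order $\varepsilon$ is the diagonal one:
$$J_L^0(0,i;u_2^{\varepsilon})=\mathbb{E}\int_{s_0}^{s_0+\varepsilon}\mathbf{1}_{\{\alpha(s_0)=j\}}\big\langle T_{22}(s,\alpha(s))v,v\big\rangle\,ds+o(\varepsilon).$$

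Finally I would pass to the limit. On the event $\{\alpha(s_0)=j\}$ the chain stays in $j$ throughout $[s_0,s_0+\varepsilon]$ except on a set of probability $O(\varepsilon)$, so the displayed integral equals $\mathbb{P}(\alpha(s_0)=j)\int_{s_0}^{s_0+\varepsilon}\langle T_{22}(s,j)v,v\rangle\,ds+O(\varepsilon^2)$. Combining with the lower bound, dividing by $\varepsilon$, and letting $\varepsilon\to0$ at a Lebesgue point $s_0$ of the (bounded) map $s\mapsto T_{22}(s,j)$ yields $\mathbb{P}(\alpha(s_0)=j)\langle T_{22}(s_0,j)v,v\rangle\geq\lambda_L\,\mathbb{P}(\alpha(s_0)=j)|v|^2$. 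Since $\alpha$ is irreducible, $\mathbb{P}(\alpha(s_0)=j)>0$ for every $s_0>0$ and every $j\in\mathcal{S}$, so this factor cancels and $\langle T_{22}(s_0,j)v,v\rangle\geq\lambda_L|v|^2$ for a.e.\ $s_0$, every $j$ and every $v$; that is, $T_{22}(\cdot,\cdot)\geq\lambda_L I\gg0$. I expect the main obstacle to be the localization bookkeeping: one must verify that the BSDE components $(Y_0^{u_2^{\varepsilon}},Z_0^{u_2^{\varepsilon}})$ genuinely contribute only at order $o(\varepsilon)$ --- this is where the \emph{backward} nature is essential, since a bounded generator driver over an interval of length $\varepsilon$ forces $Z$ of order $\varepsilon$ rather than $\sqrt{\varepsilon}$ --- and that the single-regime indicator together with irreducibility upgrades the probability-averaged inequality into the desired pointwise-in-$(s,j)$ estimate.
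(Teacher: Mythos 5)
Your proof is correct, and it takes a genuinely different route from the paper's. The paper's own proof of Lemma \ref{L-lem} embeds the question into the auxiliary forward problem (M-LQ)$_{\lambda}$ with state \eqref{M-FSLQ-state} and cost \eqref{M-FSLQ-cost}, proves that $J_{\lambda}(0,i;\cdot,\cdot)$ is uniformly convex for all $\lambda\geq\lambda_{0}$ by splitting the forward state into the two BSDE components \eqref{thm-4-1-1}--\eqref{thm-4-1-2} and estimating the cross terms, and then invokes the strongly regular Riccati solvability result (Theorem $6.3$ of \citet{Zhang.X.2021_ILQM}) to conclude that the whole block $\left(\begin{smallmatrix} T_{11}+P_{\lambda} & T_{12}\\ T_{21} & T_{22}\end{smallmatrix}\right)\gg 0$, of which $T_{22}\gg 0$ is a corner. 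You instead extract the pointwise bound directly from the coercivity $J_{L}^{0}(0,i;u_{2})\geq\lambda_{L}\,\mathbb{E}\int_{0}^{T}|u_{2}(s)|^{2}ds$ established in the proof of Proposition \ref{prop-L-open-solvability} (which precedes the lemma and does not use it, so there is no circularity) via spike variations localized in a single regime by the factor $\mathbf{1}_{\{\alpha(s_{0})=j\}}$. The argument is sound precisely because $u_{2}$ enters only the \emph{drift} of the homogeneous BSDE \eqref{L-state0}: your estimates $\mathbb{E}\sup_{s}|Y_{0}^{u_{2}^{\varepsilon}}(s)|^{2}+\mathbb{E}\int_{0}^{T}|Z_{0}^{u_{2}^{\varepsilon}}(s)|^{2}ds\leq C\varepsilon^{2}|v|^{2}$ are the standard linear-BSDE bounds (propagated from $[s_{0},s_{0}+\varepsilon]$ back to $[0,s_{0}]$), so every entry of $K^{\alpha}$ except the $T_{22}$-diagonal contributes $O(\varepsilon^{3/2})$; a forward spike through a diffusion coefficient would respond only at order $\sqrt{\varepsilon}$ and the argument would collapse, which is exactly why forward LQ problems yield only $R+D^{\top}PD$-type conditions. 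Your two remaining technical points also check out: the regime-freezing step costs $O(\varepsilon)\cdot\varepsilon$ because the deterministic bounded intensities make a jump on $[s_{0},s_{0}+\varepsilon]$ an $O(\varepsilon)$-probability event, and irreducibility of $\alpha$ gives $\mathbb{P}(\alpha(s_{0})=j)>0$ for every $s_{0}>0$, so dividing by $\varepsilon$ at Lebesgue points yields $T_{22}(\cdot,j)\geq\lambda_{L}I$ a.e.\ for each $j$. Comparing the two: your argument is more elementary and self-contained (no Riccati machinery at all) and produces the explicit constant $\lambda_{L}$, whereas the paper's conclusion keeps the constant implicit; on the other hand, the paper's detour is not wasted effort, since the family $\{\text{Problem (M-LQ)}_{\lambda}:\lambda\geq\lambda_{0}\}$ and its uniform convexity are reused later (see the remark following the lemma and the CDREs \eqref{FSLQ-REs} in Section \ref{section-RE-solvability}, Theorem \ref{thm-F-L-REs}), and its route delivers the strictly stronger conclusion that the full block involving $T_{11}+P_{\lambda}$ is uniformly positive, not merely its $T_{22}$ corner.
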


\begin{proof}
	Let us first consider the following M-FSLQ problem: for any given initial state $(a,i)\in \mathbb{R}^{n}\times\mathcal{S}$,  find a control pair $(v^{*}(\cdot),u_{2}^{*}(\cdot))\in L_{\mathbb{F}}^{2}\left(0,T;\mathbb{R}^{n}\right)\times \mathcal{U}_{2}$ such that
	\begin{equation}\label{M-FSLQ-cost}
	 \begin{aligned}
		&\quad J_{\lambda}\left(a,i;u_{2}^{*}(\cdot),v^{*}(\cdot)\right)
		=\inf_{(v,u_{2})}J_{\lambda}\left(a,i;u_{2}(\cdot),v(\cdot)\right)\\
		&\triangleq \inf_{(v,u_{2})}\mathbb{E}\left\{\int_{0}^{T}
		\left<
		\left(
		\begin{matrix}
			G(s,\alpha(s)) & S_{1}(s,\alpha(s))^{\top} & S_{2}(s,\alpha(s))^{\top} \\
			S_{1}(s,\alpha(s)) & T_{11}(s,\alpha(s)) & T_{12}(s,\alpha(s)) \\
			S_{2}(s,\alpha(s)) & T_{21}(s,\alpha(s)) & T_{22}(s,\alpha(s))
		\end{matrix}
		\right)
		\left(
		\begin{matrix}
			U(s) \\
			v(s) \\
			u_{2}(s)
		\end{matrix}
		\right),
		\left(
		\begin{matrix}
			U(s) \\
			v(s) \\
			u_{2}(s)
		\end{matrix}
		\right)
		\right>ds+\lambda |U(T)|^{2}
		\right\},
	\end{aligned}
	\end{equation}
	where $U(\cdot)$ is determined by the following stochastic differential equation: 
	\begin{equation}\label{M-FSLQ-state}
		\left\{
		\begin{aligned}
			dU(s)&=\big[\widehat{A}(s,\alpha(s))U(s)+\widehat{C}(s,\alpha(s))v(s)+\widehat{H}(s,\alpha(s))u_{2}(s)\big]ds\\
			&\quad+v(s)dW(s), \quad s \in[0, T], \\
			U(0)&=a, \quad \alpha(0)=i.
		\end{aligned}
		\right.
	\end{equation}
	We denote the above M-FSLQ problem as \textbf{Problem (M-LQ)$_{\lambda}$}, whose  corresponding CDREs is
	\begin{equation}\label{M-FSLQ-RE}
		\left\{
		\begin{aligned}
			&\dot{P}_{\lambda}\left( s,i \right) =-P_{\lambda}\left( s,i \right) \widehat{A}\left( s,i \right) -\widehat{A}\left( s,i \right) ^{\top}P_{\lambda}\left( s,i \right) -G\left( s,i \right)
			+\left( \begin{array}{c}
				\widehat{C}\left( s,i \right) ^{\top}P_{\lambda}\left( s,i \right) +S_1\left( s,i \right)\\
				\widehat{H}\left( s,i \right) ^{\top}P_{\lambda}\left( s,i \right) +S_2\left( s,i \right)\\
			\end{array} \right) ^{\top}\\
			&\times\left( \begin{matrix}
				T_{11}\left( s,i \right) +P_{\lambda}\left( s,i \right)&		T_{12}\left( s,i \right)\\
				T_{21}\left( s,i \right)&		T_{22}\left( s,i \right)\\
			\end{matrix} \right)^{-1} \left( \begin{array}{c}
				\widehat{C}\left( s,i \right) ^{\top}P_{\lambda}\left( s,i \right) +S_1\left( s,i \right)\\
				\widehat{H}\left( s,i \right) ^{\top}P_{\lambda}\left( s,i \right) +S_2\left( s,i \right)\\
			\end{array} \right) -\sum_{k=1}^D{\lambda _{ik}P_{\lambda}\left( s,k \right) }\\
			&P_{\lambda}(T)=\lambda I \quad i \in \mathcal{S} \quad \text { a.e. } \quad s \in[0, T].
		\end{aligned}
		\right.
	\end{equation}

	According to Theorem $6.3$ in \citet{Zhang.X.2021_ILQM}, if $J_{\lambda}\left(0,i;u_{2}(\cdot),v(\cdot)\right)$ 
	is uniformly convex, then the unique solution of Riccati equation \eqref{M-FSLQ-RE} satisfies
	$$
	\left( \begin{matrix}
		T_{11}\left( \cdot,i \right) +P_{\lambda}\left( \cdot,i \right)&
		T_{12}\left( \cdot,i \right)\\
		T_{21}\left( \cdot,i \right)&		T_{22}\left( \cdot,i \right)\\
	\end{matrix} \right)\gg 0,
	$$
	which implies that $T_{22}(\cdot,\cdot) \gg 0$ and so the proof is complete.

	Next, we will prove that $J_{\lambda}\left(0,i;u_{2}(\cdot),v(\cdot)\right)$ is uniformly convex when $\lambda$ is large enough, i.e., for any given $i\in\mathcal{S}$, there exists  a constant $c>0$ such that
	\begin{equation}\label{M-FSLQ-UCC}
		J_{\lambda}\left(0,i;u_{2}(\cdot),v(\cdot)\right)\geq
		c\mathbb{E}\int_{0}^{T}\big[|v(s)|^{2}+|u_{2}(s)|^{2}\big]dt,
		\quad \forall (v(\cdot),u_{2}(\cdot))\in L_{\mathbb{F}}^{2}(0,T;\mathbb{R}^{n})\times \mathcal{U}_{2}.
	\end{equation}
	To this end, for any given $\left(a,i,v,u_{2}\right)\in \mathbb{R}^{n}\times \mathcal{S}\times L_{\mathbb{F}}^{2}(0,T;\mathbb{R}^{n})\times \mathcal{U}_{2}$,
	let $U(\cdot)$ be the solution of \eqref{M-FSLQ-state}, 
	and consider the following two BSDEs
	\begin{equation}\label{thm-4-1-1}
		\left\{
		\begin{aligned}
			\begin{aligned}
				dY_{1}(s)&=\big[\widehat{A}(s,\alpha(s))Y_{1}(s)+\widehat{C}(s,\alpha(s))Z_{1}(s)\big]ds+Z_{1}(s) dW(s)+\mathbf{\Gamma}_{1}(s)\cdot d\mathbf{\widetilde{N}}(s), \quad s \in[0, T], \\
				Y_{1}(T)&=U(T),
			\end{aligned}
		\end{aligned}
		\right.
	\end{equation}
	and
	\begin{equation}\label{thm-4-1-2}
		\left\{
		\begin{aligned}
			\begin{aligned}
				dY_{2}(s)&=\big[\widehat{A}(s,\alpha(s))Y_{2}(s)+\widehat{C}(s,\alpha(s))Z_{2}(s)+\widehat{H}(s,\alpha(s))u_{2}(s)\big]ds+Z_{2}(s) dW(s)+\mathbf{\Gamma}_{2}(s)\cdot d\mathbf{\widetilde{N}}(s), \quad s \in[0, T], \\
				Y_{2}(T)&=0.
			\end{aligned}
		\end{aligned}
		\right.
	\end{equation}
	Then, according to the unique solvabilities of SDE \eqref{M-FSLQ-state} and BSDEs \eqref{thm-4-1-1}-\eqref{thm-4-1-2}, we have 
	\begin{equation}\label{wu}
		Y_{1}(s)+Y_{2}(s)=U(s),\quad Z_{1}(s)+Z_{2}(s)=v(s),\quad \mathbf{\Gamma}_{1}(s)+\mathbf{\Gamma}_{2}(s)=0, \quad s\in[0,T].
	\end{equation}
	Let
	\begin{align*}
		&\beta_{1}(s)=\left(Y_{1}(s),Z_{1}(s),0\right)^{\top},\quad
		\beta_{2}(s)=\left(Y_{2}(s),Z_{2}(s),u_{2}\right)^{\top},\\
		&K(s,\alpha(s))=\left(\begin{array}{lll}
			G(s,\alpha(s)) & S_{1}(s,\alpha(s))^{\top} & S_{2}(s,\alpha(s))^{\top} \\
			S_{1}(s,\alpha(s)) & T_{11}(s,\alpha(s)) & T_{12}(s,\alpha(s)) \\
			S_{2}(s,\alpha(s)) & T_{21}(s,\alpha(s)) & T_{22}(s,\alpha(s))
		\end{array}
		\right).
	\end{align*}
	Then, one has
	\begin{equation}\label{thm-4-2-3}
		J_{L}^{0}(0,i;u_{2}(\cdot))=\mathbb{E}\int_{0}^{T}\big<K(s,\alpha(s))\beta_{2}(s),\beta_{2}(s)\big>ds,
	\end{equation}
	and
	\begin{equation}\label{thm-4-2-4}
		\begin{aligned}
			J_{\lambda}\left(a,i;u_{2}(\cdot),v(\cdot)\right)
			&=\mathbb{E}\left\{\lambda|U(T)|^{2}+\int_{0}^{T}\big<K(s,\alpha(s))
			\bigl(\beta_{1}(s)+\beta_{2}(s)\bigr),\bigl(\beta_{1}(s)+\beta_{2}(s)\bigr)\big>ds\right\}\\
			&=J_{L}^{0}(0,i;u_{2}(\cdot))+\mathbb{E}\bigg\{
			\lambda|U(T)|^{2}+\int_{0}^{T}\big<K(s,\alpha(s))\beta_{1}(s),\beta_{1}(s)\big>ds\\
			&\quad+2\int_{0}^{T}\big<K(s,\alpha(s))\beta_{1}(s),\beta_{2}(s)\big>ds
			\bigg\}.
		\end{aligned}
	\end{equation}
	Note that $K(s,\alpha(s))\in L_{\mathbb{F}}^{\infty}\left(0,T,\mathbb{S}^{2n+m_{2}}\right)$. Therefore, there exists a constant $L>0$ such that $|K(s,\alpha(s))|\leq L$ for a.e.  $s\in[0,T]$. By the Cauchy-Schwarz inequality, we have
	\begin{equation}\label{thm-4-2-5}
		\begin{aligned}
			&\quad\left|\mathbb{E}\bigg\{
			\int_{0}^{T}\big<K(s,\alpha(s))\beta_{1}(s),\beta_{1}(s)\big>ds
			+2\int_{0}^{T}\big<K(s,\alpha(s))\beta_{1}(s),\beta_{2}(s)\big>ds\bigg\}\right|\\
			&\leq L \bigg\{\mathbb{E}\int_{0}^{T}|\beta_{1}(s)|^{2}ds
			+2\mathbb{E}\int_{0}^{T}|\beta_{1}(s)||\beta_{2}(s)|ds
			\bigg\}\\
			&\leq L \bigg\{(\mu + 1)\mathbb{E}\int_{0}^{T}|\beta_{1}(s)|^{2}ds
			+\frac{1}{\mu} \mathbb{E}\int_{0}^{T}|\beta_{2}(s)|^{2}ds
			\bigg\},
		\end{aligned}
	\end{equation}
	where $\mu >0$ is a constant to be chosen later. On the other hand, based on the estimation of the solution to BSDE, we can derive the following inequality with a sufficiently large value $L$:
	\begin{equation}\label{thm-4-2-6}
		\mathbb{E}\int_{0}^{T}|\beta_{1}(s)|^{2}ds\leq
		L \mathbb{E}|U(T)|^{2},\quad
		\mathbb{E}\int_{0}^{T}|\beta_{2}(s)|^{2}ds\leq
		L \mathbb{E}\int_{0}^{T}|u_{2}(s)|^{2}ds.
	\end{equation}
	On the other hand, noting that
	\begin{align*}
		\mathbb{E}\int_{0}^{T}|v(s)|^{2}ds
		&=\mathbb{E}\int_{0}^{T}|Z_{1}(s)+Z_{2}(s)|^{2}ds\\
		&\leq 2\left[\mathbb{E}\int_{0}^{T}|Z_{1}(s)|^{2}ds
		+\mathbb{E}\int_{0}^{T}|Z_{2}(s)|^{2}ds\right]\\
		&\leq 2\left[\mathbb{E}\int_{0}^{T}|\beta_{1}(s)|^{2}ds
		+\mathbb{E}\int_{0}^{T}|\beta_{2}(s)|^{2}ds\right]\\
		&\leq 2L\left[\mathbb{E}|U(T)|^{2}+\mathbb{E}\int_{0}^{T}|u_{2}(s)|^{2}ds\right],
	\end{align*}
	yields
	\begin{equation}\label{thm-4-2-7}
		\mathbb{E}|U(T)|^{2}\geq \frac{1}{2L}\mathbb{E}\int_{0}^{T}|v(s)|^{2}ds
		-\mathbb{E}\int_{0}^{T}|u_{2}(s)|^{2}ds.
	\end{equation}
	Combining equation \eqref{thm-4-2-5} - \eqref{thm-4-2-6}, we  obtain from \eqref{thm-4-2-5} that 
	\begin{equation}\label{thm-4-2-8}
		\begin{aligned}
			&\quad J_{\lambda}\left(a,i;u_{2}(\cdot),v(\cdot)\right)\\
			&\geq\lambda_{L} \mathbb{E}\int_{0}^{T}|u_{2}(s)|^{2}ds
			+\mathbb{E}\bigg\{
			\lambda|U(T)|^{2}-L \bigg\{(\mu + 1)\mathbb{E}\int_{0}^{T}|\beta_{1}(s)|^{2}ds
			+\frac{1}{\mu} \mathbb{E}\int_{0}^{T}|\beta_{2}(s)|^{2}ds
			\bigg\}
			\bigg\}\\
			&\geq\lambda_{L} \mathbb{E}\int_{0}^{T}|u_{2}(s)|^{2}ds
			+\mathbb{E}\bigg\{
			\lambda|U(T)|^{2}-L^{2} \bigg\{(\mu + 1)\mathbb{E}|U(T)|^{2}
			+\frac{1}{\mu} \mathbb{E}\int_{0}^{T}|u_{2}(s)|^{2}ds
			\bigg\}
			\bigg\} \\
			&=(\lambda_{L} -\frac{L^{2}}{\mu} )\mathbb{E}\int_{0}^{T}|u_{2}(s)|^{2}ds
			+(\lambda-L^{2}(\mu+1))\mathbb{E}|U(T)|^{2}.
		\end{aligned}
	\end{equation}
	Let
	\[\mu=\frac{2L^{2}}{\lambda_{L}},\qquad  \lambda\geq\lambda_{0}:=\frac{\lambda_{L} }{4}+L^{2}(\mu + 1).\]
	Then, with the help of equation \eqref{thm-4-2-6},  we have
	\begin{equation}\label{thm-4-2-9}
		\begin{aligned}
			J_{\lambda}\left(a,i;u_{2}(\cdot),v(\cdot)\right)
			&\geq\frac{\lambda_{L}}{2} \mathbb{E}\int_{0}^{T}|u_{2}(s)|^{2}ds
			+\frac{\lambda_{L}}{4}\mathbb{E}|U(T)|^{2}\\
			&\geq\frac{\lambda_{L}}{4} \mathbb{E}\int_{0}^{T}|u_{2}(s)|^{2}ds
			+\frac{\lambda_{L}}{8L}\mathbb{E}\int_{0}^{T}|v(s)|^{2}ds\\
			&\geq c\mathbb{E}\int_{0}^{T}\bigl[
			|u_{2}(s)|^{2}ds+|v(s)|^{2}ds\bigr]ds, \quad
			\forall (v(\cdot),u_{2}(\cdot))\in L_{\mathbb{F}}^{2}(0,T;\mathbb{R}^{n})\times \mathcal{U}_{2},
		\end{aligned}
	\end{equation}
	where $c=\min\{\frac{\lambda_{L}}{4},\frac{\lambda_{L}}{8L}\}>0$. This completes the proof.
\end{proof}

\begin{remark}
From the proof of Lemma \ref{L-lem}, we find that the uniform convexity of Problem (M-ZLQ-L) implies the uniform convexity of a family M-FSLQ problems: $\left\{\text{Problem (M-LQ)}_{\lambda}: \lambda\geq\lambda_{0}\right\}$. Such a finding also has been shown in \citet{Sun.J.R.2021_IBLQ}, in which they constructed the solution to Riccati equations for a BSLQ problem primarily based on this relation. However, as we can see in  Section \ref{section-RE-solvability}, such a method is ineffective for M-BSLQ problem.
\end{remark}

\subsection{Construction of the optimal control for the leader}\label{section-construction}

In this section,  we would like to construct the optimal control for the leader based on Theorem \ref{L-openloop-solvability}. By Lemma \ref{L-lem}, we know that $T_{22}(\cdot)\gg 0$. This enables us to do the transformations:
\begin{equation}\label{transformations}
\left\{
\begin{aligned}
&\upsilon(s)=u_{2}(s)+T_{22}(s,\alpha(s))^{-1}T_{21}(s,\alpha(s))Z(s),\\
&\widetilde{\rho}_{1}(s)=\rho_{1}(s)-T_{12}(s,\alpha(s))T_{22}(s,\alpha(s))^{-1}\rho_{2}(s),\\
&\widetilde{F}(s,i)=\widehat{C}(s,i)-\widehat{H}(s,i)T_{22}(s,i)^{-1}T_{21}(s,i),\\
&\widetilde{S}_{1}(s,i)=S_{1}(s,i)-T_{12}(s,i)T_{22}(s,i)^{-1}S_{2}(s,i),\\
&\widetilde{T}_{11}(s,i)=T_{11}(s,i)-T_{12}(s,i)T_{22}(s,i)^{-1}T_{21}(s,i).
\end{aligned}
\right.
\end{equation}
Then, according to unique solvability, we can obtain that the solution to \eqref{L-state} also solves the following BSDE:
\begin{equation}\label{L-state-2}
\left\{
\begin{aligned}
dY(s)&=\big[\widehat{A}(s,\alpha(s))Y(s)+\widetilde{F}(s,\alpha(s))Z(s)+\widehat{H}(s,\alpha(s))\upsilon(s)
+\widehat{f}(s)\big]ds\\
&\quad+Z(s) dW(s)+\mathbf{\Gamma}(s)\cdot d\mathbf{\widetilde{N}}(s), \quad s \in[0, T], \\
Y(T)&=m,
\end{aligned}
\right.
\end{equation}
and additionally, we have
\begin{equation}\label{L-cost-2}
\begin{aligned}
J_{L}(m,i;u_{2}(\cdot))
&=\mathbb{E}\left\{2\langle Y(0),-x \rangle+
\int_{0}^{T}
\left[
\left<
\left(
\begin{matrix}
G &S_{1}^{\top} & S_{2}^{\top} \\
S_{1} & T_{11} & T_{12} \\
S_{2} & T_{21} & T_{22}
\end{matrix}
\right)
\left(
\begin{matrix}
Y \\
Z \\
u_{2}
\end{matrix}
\right),
\left(\begin{matrix}
Y \\
Z \\
u_{2}
\end{matrix}
\right)
\right>+2\left<
\left(
\begin{matrix}
q\\
\rho_{1}\\
\rho_{2}
\end{matrix}
\right),
\left(
\begin{matrix}
Y\\
Z\\
u_{2}
\end{matrix}
\right)
\right>
\right]ds\right\}\\
&=\mathbb{E}\left\{2\langle Y(0),-x \rangle+
\int_{0}^{T}
\left[
\left<
\left(
\begin{matrix}
G &\widetilde{S}_{1}^{\top} & S_{2}^{\top} \\
\widetilde{S}_{1} & \widetilde{T}_{11} &  0 \\
S_{2} & 0 & T_{22}
\end{matrix}
\right)
\left(
\begin{matrix}
Y \\
Z \\
\upsilon
\end{matrix}
\right),
\left(\begin{matrix}
Y \\
Z \\
\upsilon
\end{matrix}
\right)
\right>+2\left<
\left(
\begin{matrix}
q \\
\widetilde{\rho}_{1} \\
\rho_{2}
\end{matrix}
\right),
\left(
\begin{matrix}
Y \\
Z \\
\upsilon
\end{matrix}
\right)
\right>
\right] ds\right\}\\
&\triangleq \widetilde{J}_{L}(m,i;\upsilon(\cdot)).
\end{aligned}
\end{equation}
In the above, we suppress the argument $(s,\alpha(s))$ or $s$ for
convenience and the suppressed notations will be frequently used in the rest of this paper when no confusion arises. We denote the problem with state \eqref{L-state-2} and cost functional $\widetilde{J}_{L}(m,i;\upsilon(\cdot))$ defined in \eqref{L-cost-2} as \textbf{Problem (M-BLQ)}. Obviously, the Problem (M-ZLQ-L) is equivalent to Problem (M-BLQ) in the sense that their corresponding optimal controls (denoted as $u_{2}^{*}(\cdot)$ and $\upsilon^{*}(\cdot)$) satisfy the following equation:
\begin{equation}\label{L-optimal-control-relation}
 u_{2}^{*}(s)=\upsilon^{*}(s)-T_{22}(s,\alpha(s))^{-1}T_{21}(s,\alpha(s))Z^{*}(s),\quad s\in[0,T],\quad a.s.,
\end{equation}
where $\left(Y^{*}(\cdot),Z^{*}(\cdot),\mathbf{\Gamma}^{*}(\cdot)\right)$ is the corresponding optimal state process.

Now, we return to construct an explicit optimal control for the leader's problem. To this end, for any given $\mathbb{S}^{n}$-valued function $\Sigma:[0,T]\times \mathcal{S}\rightarrow \mathbb{S}^{n}$, we define notations: 
\begin{equation}\label{L-notations-2}
\left\{
 \begin{aligned}
& \widehat{\mathcal{T}}\left(\Sigma(s,i)\right):=I+ \Sigma(s,i)\widetilde{T}_{11}(s,i),\\
& \widehat{\mathcal{F}}\left(\Sigma(s,i)\right):=\widetilde{F}(s,i)+\Sigma(s,i)\widetilde{S}_{1}(s,i)^{\top},\\
& \widehat{\mathcal{H}}\left(\Sigma(s,i)\right):=\widehat{H}(s,i)+\Sigma(s,i)S_{2}(s,i)^{\top},
\end{aligned}
\right.
\end{equation}
and formulate the following CDREs:
\begin{equation}\label{L-RE}
\left\{
\begin{aligned}
	&\dot{\Sigma}(s,i)=\widehat{A}(s,i)\Sigma(s,i)+\Sigma(s,i)\widehat{A}(s,i)^{\top}+\Sigma(s,i)G(s,i)\Sigma(s,i)
	-\sum_{k=1}^{D}\lambda_{ik}(s)\Sigma(s,k)\\
	&-\widehat{\mathcal{F}}\left(\Sigma(s,i)\right)\widehat{\mathcal{T}}\left(\Sigma(s,i)\right)^{-1}
	\Sigma(s,i)\widehat{\mathcal{F}}\left(\Sigma(s,i)\right)^{\top}-\widehat{\mathcal{H}}\left(\Sigma(s,i)\right)T_{22}(s,i)^{-1}
	\widehat{\mathcal{H}}\left(\Sigma(s,i)\right)^{\top},\\
	&\Sigma(T,i)=0, \qquad s\in[0,T],\quad i\in\mathcal{S}.
\end{aligned}
\right.
\end{equation}
Based on the solution to the above CDREs, we further introduce the following linear BSDE:
\begin{equation}\label{L-L-BSDE}
\left\{
\begin{aligned}
	d\varphi(s)&=\big\{\big[\widehat{A}(s,\alpha(s))-\widehat{\mathcal{F}}(\Sigma(s,\alpha(s)))\widehat{\mathcal{T}}(\Sigma(s,\alpha(s)))^{-1}\Sigma(s,\alpha(s)) \widetilde{S}_{1}(s,\alpha(s))+\Sigma(s,\alpha(s)) G(s,\alpha(s))\\
	&\quad-\widehat{\mathcal{H}}(\Sigma(s,\alpha(s)))T_{22}(s,\alpha(s))^{-1}S_{2}(s,\alpha(s))\big]\varphi(s)
	+\widehat{\mathcal{F}}(\Sigma(s,\alpha(s)))\widehat{\mathcal{T}}(\Sigma(s,\alpha(s)))^{-1}[\theta(s)-\Sigma(s,\alpha(s))\widetilde{\rho}_{1}(s)]\\
	&\quad -\widehat{\mathcal{H}}(\Sigma(s,\alpha(s)))T_{22}(s,\alpha(s))^{-1}\rho_{2}(s)+\Sigma(s,\alpha(s)) q(s)+\widehat{f}(s)\big\}ds+\theta(s) dW(s)+\mathbf{\gamma}(s)\cdot d\mathbf{\widetilde{N}}(s), \, s \in[0, T], \\
	\varphi(T)&=m.
\end{aligned}
\right.
\end{equation}

The following result provides an explicit optimal control for Problem (M-ZLQ-L) by using the solution $\mathbf{\Sigma}$ to CDREs \eqref{L-RE} and the adapted solution $\left(\varphi,\theta,\gamma\right)$ to BSDE \eqref{L-L-BSDE}.

\begin{theorem}\label{L-thm-control}
Let assumptions (H1)-(H4)  hold. Suppose that the CDREs \eqref{L-RE} admits a unique solution $\mathbf{\Sigma}(\cdot)\equiv\left[\Sigma(\cdot,1),\cdots,\Sigma(\cdot,D)\right]\in \mathcal{D}\left(C\left(0,T;\mathbb{S}^{n}\right)\right)$ such that $\widehat{\mathcal{T}}\left(\Sigma(\cdot,\alpha(\cdot))\right)^{-1}\in L^{\infty}(0, T ; \mathbb{R}^{n\times n})$. Let $\left(\varphi(\cdot),\theta(\cdot),\gamma(\cdot)\right)$ be the adapted solution to the linear BSDE \eqref{L-L-BSDE} and $\phi^{*}$ the solution to the following SDE:
\begin{equation}\label{phi}
	\left\{
	\begin{aligned}
		&d\phi^{*}(s)=\Big\{\big[-\widehat{A}(s,\alpha(s))^{\top}-G(s,\alpha(s))\Sigma(s,\alpha(s))+\widetilde{S}_{1}(s,\alpha(s))^{\top}\widehat{\mathcal{T}}(\Sigma(s,\alpha(s)))^{-1}\Sigma(s,\alpha(s))\widehat{\mathcal{F}}(\Sigma(s,\alpha(s)))^{\top}\\
		&\quad+S_{2}(s,\alpha(s))^{\top}T_{22}(s,\alpha(s))^{-1}\widehat{\mathcal{H}}(\Sigma(s,\alpha(s)))^{\top}\big]\phi^{*}(s)
		-\big[\widetilde{S}_{1}(s,\alpha(s))^{\top}\widehat{\mathcal{T}}(\Sigma(s,\alpha(s)))^{-1}\Sigma(s,\alpha(s)) \widetilde{S}_{1}(s,\alpha(s))\\
		&\quad+S_{2}(s,\alpha(s))^{\top}T_{22}(s,\alpha(s))^{-1}S_{2}(s,\alpha(s))-G(s,\alpha(s))\big]\varphi(s)+\widetilde{S}_{1}(s,\alpha(s))^{\top}\widehat{\mathcal{T}}(\Sigma(s,\alpha(s)))^{-1}\theta(s)\\
		&\quad-\widetilde{S}_{1}(s,\alpha(s))^{\top}\widehat{\mathcal{T}}(\Sigma(s,\alpha(s)))^{-1}\Sigma(s,\alpha(s))\widetilde{\rho}_{1}(s)
		-S_{2}(s,\alpha(s))^{\top}T_{22}(s,\alpha(s))^{-1}\rho_{2}(s)+q(s)\Big\}ds\\
		&\quad-\big[\widehat{\mathcal{T}}(\Sigma(s,\alpha(s)))^{-1}\big]^{\top}\big[\widehat{\mathcal{F}}(\Sigma(s,\alpha(s)))^{\top}\phi^{*}(s)-\widetilde{T}_{11}(s,\alpha(s))\theta(s)-\widetilde{S}_{1}(s,\alpha(s))\varphi(s)-\widetilde{\rho}_{1}(s)\big]dW(s),\\
		&\phi^{*}(0)=-x.
	\end{aligned}
	\right.
\end{equation}
Then, the optimal control for the leader is given by
\begin{equation}\label{L-optimal-control}
\begin{aligned}
u_{2}^{*}(s)&=T_{22}(s,\alpha(s))^{-1}\big[\widehat{\mathcal{H}}\left(\Sigma(s,\alpha(s))\right)^{\top}-T_{21}(s,\alpha(s))\widehat{\mathcal{T}}\left(\Sigma(s,\alpha(s))\right)^{-1}\Sigma(s,\alpha(s))\widehat{\mathcal{F}}\left(\Sigma(s,\alpha(s))\right)^{\top}\big]\phi^{*}(s)\\
&\quad-T_{22}(s,\alpha(s))^{-1}\Big\{T_{21}(s,\alpha(s))\widehat{\mathcal{T}}\left(\Sigma(s,\alpha(s))\right)^{-1}\big[\theta(s)-\Sigma(s,\alpha(s))\widetilde{S}_{1}(s,\alpha(s))\varphi(s)-\Sigma(s,\alpha(s))\widetilde{\rho}_{1}(s)\big]\\
&\quad +S_{2}(s,\alpha(s))\varphi(s)+\rho_{2}(s)\Big\}.
\end{aligned}
\end{equation}
\end{theorem}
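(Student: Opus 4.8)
The plan is to prove this as a \emph{verification} theorem: I would construct, from the given data $\mathbf{\Sigma}$, $(\varphi,\theta,\gamma)$ and $\phi^{*}$, a quadruple $(\phi^{*},Y^{*},Z^{*},\mathbf{\Gamma}^{*})$ together with the control \eqref{L-optimal-control}, show that it satisfies the optimality system of Theorem \ref{L-openloop-solvability} (the FBSDE \eqref{L-FBSDE} and the stationary condition \eqref{L-stationary}), and then conclude optimality since the convexity \eqref{L-convex} holds under (H4) by Proposition \ref{prop-L-open-solvability}, with uniqueness supplied by the same proposition. Because the diagonalized cost \eqref{L-cost-2} is more convenient, I would first record the optimality system of the equivalent Problem (M-BLQ): repeating the variational computation of Theorem \ref{L-openloop-solvability} for the state \eqref{L-state-2} and cost \eqref{L-cost-2}, the forward adjoint $\phi^{*}$ (with $\phi^{*}(0)=-x$) has drift $-\widehat{A}^{\top}\phi^{*}+GY^{*}+\widetilde{S}_{1}^{\top}Z^{*}+S_{2}^{\top}\upsilon^{*}+q$ and diffusion $-\widetilde{F}^{\top}\phi^{*}+\widetilde{S}_{1}Y^{*}+\widetilde{T}_{11}Z^{*}+\widetilde{\rho}_{1}$, while the stationary condition reduces to $-\widehat{H}^{\top}\phi^{*}+S_{2}Y^{*}+T_{22}\upsilon^{*}+\rho_{2}=0$. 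Since $T_{22}(\cdot,\cdot)\gg0$ by Lemma \ref{L-lem}, this gives $\upsilon^{*}=T_{22}^{-1}(\widehat{H}^{\top}\phi^{*}-S_{2}Y^{*}-\rho_{2})$; moreover the change of variable \eqref{L-optimal-control-relation} cancels the $T_{21}Z^{*}$ terms, so this $\phi^{*}$ agrees with the adjoint of \eqref{L-FBSDE}.

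Next I would introduce the decoupling ansatz $Y^{*}(s)=-\Sigma(s,\alpha(s))\phi^{*}(s)+\varphi(s)$, noting that the terminal data $\Sigma(T,i)=0$ and $\varphi(T)=m$ are exactly what force $Y^{*}(T)=m$. Applying the Markov-modulated It\^o formula to $\Sigma(s,\alpha(s))\phi^{*}(s)$ — where the generator contributes the coupling term $\sum_{k=1}^{D}\lambda_{ik}(s)\Sigma(s,k)$ to the drift and the differences $[\Sigma(s,k)-\Sigma(s,i)]\phi^{*}$ to the compensated martingales $d\mathbf{\widetilde{N}}$, thereby identifying $\mathbf{\Gamma}^{*}$ — and matching the $dW$ coefficients of the two expressions for $Y^{*}$ yields an implicit linear relation for $Z^{*}$. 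Here the hypothesis $\widehat{\mathcal{T}}(\Sigma)^{-1}\in L^{\infty}$ is used to invert $I+\Sigma\widetilde{T}_{11}=\widehat{\mathcal{T}}(\Sigma)$, giving $Z^{*}=\widehat{\mathcal{T}}(\Sigma)^{-1}\big[\theta+\Sigma\widehat{\mathcal{F}}(\Sigma)^{\top}\phi^{*}-\Sigma\widetilde{S}_{1}\varphi-\Sigma\widetilde{\rho}_{1}\big]$, where I use $\widehat{\mathcal{F}}(\Sigma)^{\top}=\widetilde{F}^{\top}+\widetilde{S}_{1}\Sigma$.

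With $Y^{*}$, $Z^{*}$ and $\upsilon^{*}$ so expressed, I would substitute them into the drift-matching identity from It\^o. Collecting the terms linear in $\phi^{*}$ must reproduce precisely the right-hand side of the CDREs \eqref{L-RE} — this is the step that forces $\Sigma$ to solve \eqref{L-RE}, the nonlinear terms $\Sigma G\Sigma$, $\widehat{\mathcal{F}}(\Sigma)\widehat{\mathcal{T}}(\Sigma)^{-1}\Sigma\widehat{\mathcal{F}}(\Sigma)^{\top}$ and $\widehat{\mathcal{H}}(\Sigma)T_{22}^{-1}\widehat{\mathcal{H}}(\Sigma)^{\top}$ arising from the feedback substitution; the remaining, $\phi^{*}$-free, terms reproduce the generator of the BSDE \eqref{L-L-BSDE} for $\varphi$. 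Simultaneously, feeding $Y^{*},Z^{*},\upsilon^{*}$ into the adjoint dynamics and using the symmetry identity $[\widehat{\mathcal{T}}(\Sigma)^{-1}]^{\top}=(I+\widetilde{T}_{11}\Sigma)^{-1}$ yields exactly the closed-loop SDE \eqref{phi}. Finally, returning to the original control through \eqref{L-optimal-control-relation}, $u_{2}^{*}=\upsilon^{*}-T_{22}^{-1}T_{21}Z^{*}$, and inserting the feedback forms of $\upsilon^{*}$ and $Z^{*}$ (using $\widehat{\mathcal{H}}(\Sigma)^{\top}=\widehat{H}^{\top}+S_{2}\Sigma$) gives exactly \eqref{L-optimal-control}. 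The constructed quadruple then solves \eqref{L-FBSDE} and \eqref{L-stationary}, so Theorem \ref{L-openloop-solvability} certifies optimality and Proposition \ref{prop-L-open-solvability} gives uniqueness.

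I expect the main obstacle to be the coefficient matching in the It\^o expansion: one must simultaneously keep track of the Markov-chain jump contributions (both the $\sum_{k}\lambda_{ik}\Sigma(s,k)$ drift correction and the $\mathbf{\widetilde{N}}$-martingale part defining $\mathbf{\Gamma}^{*}$), resolve the implicit equation for $Z^{*}$ through $\widehat{\mathcal{T}}(\Sigma)^{-1}$, and then verify that the $\phi^{*}$-linear terms collapse into the nonlinear structure of \eqref{L-RE} while the two expressions for the diffusion of $\phi^{*}$ (the abstract adjoint form and the explicit form in \eqref{phi}) coincide. This last reconciliation rests on the Woodbury-type identities relating $\widetilde{T}_{11}\widehat{\mathcal{T}}(\Sigma)^{-1}$ and $(I+\widetilde{T}_{11}\Sigma)^{-1}\widetilde{T}_{11}$, and is exactly where the invertibility assumption on $\widehat{\mathcal{T}}(\Sigma)$ is indispensable; the computation is long but mechanical once the decoupling ansatz is fixed.
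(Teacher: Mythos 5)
Your proposal is correct and follows essentially the same route as the paper's proof: the same decoupling ansatz $Y^{*}=-\Sigma\phi^{*}+\varphi$ with $Z^{*}=\widehat{\mathcal{T}}(\Sigma)^{-1}\big[\Sigma\widehat{\mathcal{F}}(\Sigma)^{\top}\phi^{*}-\Sigma\widetilde{S}_{1}\varphi-\Sigma\widetilde{\rho}_{1}+\theta\big]$ and $\Gamma_{k}^{*}$ from the regime-switching jump terms, verification that the quintuple solves the optimality system of the transformed Problem (M-BLQ) via the Markov-modulated It\^o formula and the identities $I-\widetilde{T}_{11}\widehat{\mathcal{T}}(\Sigma)^{-1}\Sigma=[\widehat{\mathcal{T}}(\Sigma)^{-1}]^{\top}$, and the conclusion through Theorem \ref{L-openloop-solvability}, relation \eqref{L-optimal-control-relation}, and Proposition \ref{prop-L-open-solvability}. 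No gaps: the coefficient-matching you flag as the main obstacle is exactly the (long but mechanical) computation the paper carries out.
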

\begin{proof}
To obtain an explicit optimal control for the leader, we only need to construct a solution to the corresponding optimality system for the Problem (M-BLQ). Then, the desired result follows from the Theorem \ref{L-openloop-solvability} and relation \eqref{L-optimal-control-relation}. Now, let us elaborate on this process.
Set
   \begin{align}
	\label{L-P-Y}
	&Y^{*}=-\Sigma\phi^{*}+\varphi,\\
	\label{L-P-Z}
	&Z^{*}=\widehat{\mathcal{T}}\left(\Sigma\right)^{-1}\big[\Sigma\widehat{\mathcal{F}}\left(\Sigma\right)^{\top}\phi^{*}-\Sigma\widetilde{S}_{1}\varphi-\Sigma\widetilde{\rho}_{1}+\theta\big],\\
	\label{L-P-Gamma}
	&\Gamma_{k}^{*}(s)=-\big[\Sigma(s,k)-\Sigma(s,\alpha(s-))\big]\phi^{*}(s)+\gamma_{k}(s),\quad s\in[0,T],\qquad k\in\mathcal{S},\\
	\label{L-optimal-upsilon}
	&\upsilon^{*}=T_{22}^{-1}\big[\widehat{\mathcal{H}}\left(\Sigma\right)^{\top}\phi^{*}-S_{2}\varphi-\rho_{2}\big].
\end{align}
We claim that $\left(\phi^{*},Y^{*},Z^{*},\mathbf{\Gamma}^{*},\upsilon^{*}\right)$ defined in the above solves the following coupled FBSDE:
		\begin{equation}\label{L-FBSDE-2}
			\left\{
			\begin{aligned}
				&d\phi^{*}(s)=\big[-\widehat{A}^{\top}\phi^{*}+GY^{*}+\widetilde{S}_{1}^{\top}Z^{*}+S_{2}^{\top}\upsilon^{*}+q\big]ds+\big[-\widetilde{F}^{\top}\phi^{*}+\widetilde{S}_{1}Y^{*}+\widetilde{T}_{11}Z^{*}+\widetilde{\rho}_{1}(s)\big]dW(s)\\
				&dY^{*}(s)=\big[\widehat{A}Y^{*}+\widetilde{F}Z^{*}+\widehat{H}\upsilon^{*}+\widehat{f}\big]ds+Z^{*}dW(s)+\Gamma^{*}\cdot d\widetilde{N}(s),\\
				&-\widehat{H}^{\top}\phi^{*}+S_{2}Y^{*}+T_{22}\upsilon^{*}+\rho_{2}=0,\\
				&\phi^{*}(0)=-x,\quad \alpha(0)=i,\quad Y^{*}(T)=m.
			\end{aligned}
			\right.
		\end{equation}
To see this, let
\begin{align*}
 \beta_{1}&=\big[-\widehat{A}^{\top}-G\Sigma+\widetilde{S}_{1}^{\top}\widehat{\mathcal{T}}(\Sigma)^{-1}\Sigma\widehat{\mathcal{F}}(\Sigma)^{\top}+S_{2}^{\top}T_{22}^{-1}\widehat{\mathcal{H}}(\Sigma)^{\top}\big]\phi^{*}
		-\big[\widetilde{S}_{1}^{\top}\widehat{\mathcal{T}}(\Sigma)^{-1}\Sigma \widetilde{S}_{1}\\
		&\quad+S_{2}^{\top}T_{22}^{-1}S_{2}-G\big]\varphi+\widetilde{S}_{1}^{\top}\widehat{\mathcal{T}}(\Sigma)^{-1}\theta-\widetilde{S}_{1}^{\top}\widehat{\mathcal{T}}(\Sigma)^{-1}\Sigma\widetilde{\rho}_{1}
		-S_{2}^{\top}T_{22}^{-1}\rho_{2}+q ,\\
\beta_{2}&=	-\big[\widehat{\mathcal{T}}(\Sigma)^{-1}\big]^{\top}\big[\widehat{\mathcal{F}}(\Sigma)^{\top}\phi^{*}-\widetilde{T}_{11}\theta-\widetilde{S}_{1}\varphi-\widetilde{\rho}_{1}\big].
\end{align*}
Then, we have
\begin{align*}
\beta_{1}&=-\widehat{A}^{\top}\phi^{*}+G(-\Sigma\phi^{*}+\varphi)+\widetilde{S}_{1}^{\top}\widehat{\mathcal{T}}(\Sigma)^{-1}\big[\Sigma\widehat{\mathcal{F}}(\Sigma)^{\top}\phi^{*}-\Sigma\big(\widetilde{S}_{1}\varphi+\widetilde{\rho}_{1}\big)+\theta\big]+S_{2}^{\top}T_{22}^{-1}\big[\widehat{\mathcal{H}}(\Sigma)^{\top}\phi^{*}-S_{2}\varphi-\rho_{2}\big]+q\\
	&= -\widehat{A}^{\top}\phi^{*}+GY^{*}+\widetilde{S}_{1}^{\top}Z^{*}+S_{2}^{\top}\upsilon^{*}+q,
\end{align*}
and
\begin{align*}
\beta_{2}&=\big[\widetilde{T}_{11}\widehat{\mathcal{T}}(\Sigma)^{-1}\Sigma-I\big]\widehat{\mathcal{F}}(\Sigma)^{\top}\phi^{*}+\widetilde{T}_{11}\widehat{\mathcal{T}}(\Sigma)^{-1}\theta+\big[I-\widetilde{T}_{11}\widehat{\mathcal{T}}(\Sigma)^{-1}\Sigma\big]\big(\widetilde{S}_{1}\varphi+\widetilde{\rho}_{1}\big)\\
&=-\widetilde{F}^{\top}\phi^{*}+\widetilde{S}_{1}(-\Sigma\phi^{*}+\varphi)+\widetilde{T}_{11}\widehat{\mathcal{T}}(\Sigma)^{-1}\big[\Sigma\widehat{\mathcal{F}}(\Sigma)^{\top}\phi^{*}-\Sigma\big(\widetilde{S}_{1}\varphi+\widetilde{\rho}_{1}\big)+\theta\big]+\widetilde{\rho}_{1}\\
  &=-\widetilde{F}^{\top}\phi^{*}+\widetilde{S}_{1}Y^{*}+\widetilde{T}_{11}Z^{*}+\widetilde{\rho}_{1}.
\end{align*}
In the above, we use the following fact in the second equation:
$$
I-\widetilde{T}_{11}\big[\widehat{\mathcal{T}}(\Sigma)\big]^{-1}\Sigma=\big[\widehat{\mathcal{T}}(\Sigma)^{-1}\big]^{\top},\quad
\widetilde{T}_{11}\widehat{\mathcal{T}}(\Sigma)^{-1}=\big[\widehat{\mathcal{T}}(\Sigma)^{-1}\big]^{\top}\widetilde{T}_{11}.
$$
On the other hand, it follows from \eqref{L-RE} that
\begin{equation}\label{L-P-RE}
	\begin{aligned}
		d\Sigma(s,\alpha(s))&=\big\{\dot{\Sigma}(s,\alpha(s))
		+\sum_{k=1}^{D}\lambda_{\alpha(s-),k}(s)\big[\Sigma(s,k)-\Sigma(s-,\alpha(s-))\big]\big\}ds\\
		&\quad+\sum_{k=1}^{D}\big[\Sigma(s,k)-\Sigma(s-,\alpha(s-))\big]d\widetilde{N}_{k}(s)\\
		&=\big\{\widehat{A}\Sigma
		+\Sigma\widehat{A}^{\top}+\Sigma G\Sigma
		-\widehat{\mathcal{F}}(\Sigma)\widehat{\mathcal{T}}(\Sigma)^{-1}\Sigma\widehat{\mathcal{F}}(\Sigma)^{\top}
		-\widehat{\mathcal{H}}(\Sigma)T_{22}^{-1}\widehat{\mathcal{H}}(\Sigma)^{\top}\big\}ds\\
		&\quad+\sum_{k=1}^{D}\big[\Sigma(s,k)-\Sigma(s-,\alpha(s-))\big]d\widetilde{N}_{k}(s).
	\end{aligned}
\end{equation}
Let
$$
\begin{aligned}
	\beta=&\,\big[\widehat{A}
	-\widehat{\mathcal{H}}(\Sigma)T_{22}^{-1}S_{2}
	-\widehat{\mathcal{F}}(\Sigma)\widehat{\mathcal{T}}(\Sigma)^{-1}
	\Sigma \widetilde{S}_{1}+\Sigma G\big]\varphi
	+\widehat{\mathcal{F}}(\Sigma)\widehat{\mathcal{T}}(\Sigma)^{-1}\theta\\
	&-\widehat{\mathcal{F}}(\Sigma)\widehat{\mathcal{T}}(\Sigma)^{-1}
	\Sigma\widetilde{\rho}_{1}
	-\widehat{\mathcal{H}}(\Sigma)T_{22}^{-1}\rho_{2}+\Sigma q+\widehat{f}.
\end{aligned}$$
Applying the It\^o's formula to $Y^{*}=-\Sigma\phi^{*}+\varphi$, we obtain: 
\begin{align*}
	dY^{*}&=\big\{\beta-\big[\widehat{A}\Sigma+\Sigma\widehat{A}^{\top}+\Sigma G\Sigma-\widehat{\mathcal{F}}(\Sigma)\widehat{\mathcal{T}}(\Sigma)^{-1}\Sigma\widehat{\mathcal{F}}(\Sigma)^{\top}-\widehat{\mathcal{H}}(\Sigma)T_{22}^{-1}\widehat{\mathcal{H}}(\Sigma)^{\top}\big]\phi^{*}-\Sigma\beta_{1}\big\}ds\\
	&\quad+\big[\theta-\Sigma\beta_{2}\big]dW(s)+\Gamma \cdot d\widetilde{N}(s),
\end{align*}
One can further verify that
\begin{align*}
	&\quad\beta-\big[\widehat{A}\Sigma+\Sigma\widehat{A}^{\top}+\Sigma G\Sigma-\widehat{\mathcal{F}}(\Sigma)\widehat{\mathcal{T}}(\Sigma)^{-1}\Sigma\widehat{\mathcal{F}}(\Sigma)^{\top}-\widehat{\mathcal{H}}(\Sigma)T_{22}^{-1}\widehat{\mathcal{H}}(\Sigma)^{\top}\big]\phi^{*}-\Sigma\beta_{1}\\
	&=\big[\widehat{A}-\widehat{\mathcal{H}}(\Sigma)T_{22}^{-1}S_{2}
	-\widehat{\mathcal{F}}(\Sigma)\widehat{\mathcal{T}}(\Sigma)^{-1}\Sigma \widetilde{S}_{1}+\Sigma G\big]\varphi
	+\widehat{\mathcal{F}}(\Sigma)\widehat{\mathcal{T}}(\Sigma)^{-1}\theta-\widehat{\mathcal{F}}(\Sigma)\widehat{\mathcal{T}}(\Sigma)^{-1}\Sigma\widetilde{\rho}_{1}
	\\
	&\quad-\widehat{\mathcal{H}}(\Sigma)T_{22}^{-1}\rho_{2}+\widehat{f}+\Sigma q-\Sigma\big[-\widehat{A}^{\top}-G\Sigma+\widetilde{S}_{1}^{\top}\widehat{\mathcal{T}}(\Sigma)^{-1}\Sigma\widehat{\mathcal{F}}(\Sigma)^{\top}+S_{2}^{\top}T_{22}^{-1}\widehat{\mathcal{H}}(\Sigma)^{\top}\big]\phi^{*}\\
	&\quad+\Sigma\big[\widetilde{S}_{1}^{\top}\widehat{\mathcal{T}}(\Sigma)^{-1}\Sigma \widetilde{S}_{1}+S_{2}^{\top}T_{22}^{-1}S_{2}-G\big]\varphi
	-\Sigma\big[\widetilde{S}_{1}^{\top}\widehat{\mathcal{T}}(\Sigma)^{-1}\theta-\widetilde{S}_{1}^{\top}\widehat{\mathcal{T}}(\Sigma)^{-1}\Sigma\widetilde{\rho}_{1}
	-S_{2}^{\top}T_{22}^{-1}\rho_{2}+q\big]\\
	&\quad-\big[\widehat{A}\Sigma
	+\Sigma\widehat{A}^{\top}+\Sigma G\Sigma-\widehat{\mathcal{F}}(\Sigma)\widehat{\mathcal{T}}(\Sigma)^{-1}
	\Sigma\widehat{\mathcal{F}}(\Sigma)^{\top}-\widehat{\mathcal{H}}(\Sigma)T_{22}^{-1}\widehat{\mathcal{H}}(\Sigma)^{\top}\big]\phi^{*}\\
	&=\big[\widehat{A}-\widehat{H}T_{22}^{-1}S_{2}-\widetilde{F}\widehat{\mathcal{T}}(\Sigma)^{-1}\Sigma \widetilde{S}_{1}\big]\varphi
	+\big[-\widehat{A}\Sigma+\widehat{H}T_{22}^{-1}\widehat{\mathcal{H}}(\Sigma)^{\top}
	+\widetilde{F}\widehat{\mathcal{T}}(\Sigma)^{-1}\Sigma\widehat{\mathcal{F}}(\Sigma)^{\top}\big]\phi^{*}\\
	&\quad+\widetilde{F}\widehat{\mathcal{T}}(\Sigma)^{-1}\theta-\widetilde{F}\widehat{\mathcal{T}}(\Sigma)^{-1}\Sigma\widetilde{\rho}_{1}
	-\widehat{H}T_{22}^{-1}\rho_{2}+\widehat{f}\\
	&=\widehat{A}\big[-\Sigma\phi^{*}+\varphi\big]+\widetilde{F}\widehat{\mathcal{T}}(\Sigma)^{-1}\big[\Sigma\widehat{\mathcal{F}}(\Sigma)^{\top}\phi^{*}-\Sigma \widetilde{S}_{1}\varphi(s)-\Sigma\widetilde{\rho}_{1}+\theta\big]\\
	&\quad+\widehat{H}T_{22}^{-1}\big[\widehat{\mathcal{H}}(\Sigma)^{\top}\phi^{*}-S_{2}\varphi-\rho_{2}\big]+\widehat{f}\\
	&=\widehat{A}Y^{*}+\widetilde{F}Z^{*}+\widehat{H}\upsilon^{*}+\widehat{f},
\end{align*}
and
\begin{align*}
	\theta-\Sigma\beta_{2}
	&=\theta+\Sigma\left[\widehat{\mathcal{T}}(\Sigma)^{-1}\right]^{\top}\big[\widehat{\mathcal{F}}(\Sigma)^{\top}\phi^{*}-\widetilde{T}_{11}\theta-\widetilde{S}_{1}\varphi-\widetilde{\rho}_{1}\big]\\
	&=\theta+\widehat{\mathcal{T}}(\Sigma)^{-1}\Sigma\big[\widehat{\mathcal{F}}(\Sigma)^{\top}\phi^{*}-\widetilde{T}_{11}\theta-\widetilde{S}_{1}\varphi-\widetilde{\rho}_{1}\big]\\
	&=\big[I-\widehat{\mathcal{T}}(\Sigma)^{-1}\Sigma \widetilde{T}_{11}\big]\theta+\widehat{\mathcal{T}}(\Sigma)^{-1}\Sigma \big[\widehat{\mathcal{F}}(\Sigma)^{\top}\phi^{*}-\big(\widetilde{S}_{1}\varphi+\widetilde{\rho}_{1}\big)\big]\\
	&=Z^{*}.
\end{align*}
Finally, by the definition of $\upsilon^{*}$ in equation \eqref{L-optimal-upsilon}, we can immediately obtain that
\begin{equation}\label{L-P-stationary}
	T_{22}\upsilon^{*}=\widehat{\mathcal{H}}(\Sigma)^{\top}\phi^{*}-S_{2}\varphi-\rho_{2}
	=\widehat{H}^{\top}\phi^{*}+S_{2}(\Sigma\phi^{*}-\varphi)-\rho_{2}
	=\widehat{H}^{\top}\phi^{*}-S_{2}Y^{*}-\rho_{2}.
\end{equation}
To sum up, we prove our claim.  Therefore, by Theorem \ref{L-openloop-solvability}, $\upsilon^{*}$ is the optimal control for Problem (M-BLQ). Consequently, the desired result follows from the relation \eqref{L-optimal-control-relation}. This completes the proof.
\end{proof}

Based on the above result, we can obtain the following presentation for the value function of Problem (M-ZLQ-L).

\begin{theorem}\label{L-thm-value}
Let the conditions of the Theorem \ref{L-thm-control} hold. Then the value function \eqref{L-value} of Problem (M-ZLQ-L) is given by
\begin{equation}\label{L-value-function}
	\begin{aligned}
		V_{L}(m,i)
		&=\mathbb{E}\langle \Sigma(0,i)x+2\varphi(0),-x \rangle
		+\mathbb{E}\Big\{\int_{0}^{T}\Big[
		-\big<\varphi,\big(\widetilde{S}_{1}^{\top}\widehat{\mathcal{T}}(\Sigma)^{-1}\Sigma \widetilde{S}_{1}
		+S_{2}^{\top}T_{22}^{-1}S_{2}-G\big)\varphi\big>\\
		&\quad+2\big<\varphi,\widetilde{S}_{1}^{\top}\widehat{\mathcal{T}}(\Sigma)^{-1}\theta-\widetilde{S}_{1}^{\top}\widehat{\mathcal{T}}(\Sigma)^{-1}\Sigma\widetilde{\rho}_{1}
		-S_{2}^{\top}T_{22}^{-1}\rho_{2}+q\big>+\big<\theta,\widetilde{T}_{11}\widehat{\mathcal{T}}(\Sigma)^{-1}\theta\big>\\
		&\quad-\big<\widetilde{\rho}_{1},\widehat{\mathcal{T}}(\Sigma)^{-1}\Sigma\widetilde{\rho}_{1}\big>+2\big<\widetilde{\rho}_{1},\widehat{\mathcal{T}}(\Sigma)^{-1}\theta\big>-\big<\rho_{2},T_{22}^{-1}\rho_{2}\big>\Big]ds\Big\}.
	\end{aligned}
\end{equation}
\end{theorem}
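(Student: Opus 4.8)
The plan is to evaluate $V_L(m,i)=\widetilde J_L(m,i;\upsilon^*(\cdot))$ by substituting the optimal quadruple $(\phi^*,Y^*,Z^*,\mathbf{\Gamma}^*,\upsilon^*)$ constructed in Theorem \ref{L-thm-control} (see \eqref{L-P-Y}--\eqref{L-optimal-upsilon}) into the cost functional \eqref{L-cost-2}, and then to eliminate all dependence on the adjoint state $\phi^*$ by two applications of It\^o's formula. Writing $\widetilde J_L(m,i;\upsilon^*)=2\langle Y^*(0),-x\rangle+\mathbb E\int_0^T \mathcal Q(s)\,ds$, where $\mathcal Q$ is the quadratic-plus-affine integrand of \eqref{L-cost-2} evaluated along $(Y^*,Z^*,\upsilon^*)$, the first task is to re-express the quadratic part of $\mathcal Q$ through the optimality system \eqref{L-FBSDE-2}.

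First I would apply It\^o's formula to $\langle\phi^*(s),Y^*(s)\rangle$. Using the two dynamics in \eqref{L-FBSDE-2}, the skew-symmetric contributions built from $\widehat A$ and $\widetilde F$ cancel, and the stationary relation $\widehat H^\top\phi^*=S_2Y^*+T_{22}\upsilon^*+\rho_2$ (the third line of \eqref{L-FBSDE-2}) lets me identify the quadratic part of $\mathcal Q$ with the drift of $\langle\phi^*,Y^*\rangle$ minus the linear terms $\langle q,Y^*\rangle+\langle\widetilde\rho_1,Z^*\rangle+\langle\rho_2,\upsilon^*\rangle+\langle\phi^*,\widehat f\rangle$. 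Since $\phi^*$ is continuous whereas $Y^*$ carries the compensated jump martingale $\mathbf{\Gamma}^*\cdot d\mathbf{\widetilde{N}}$, the cross variation only sees the $dW$ part, and both the Brownian and the compensated chain martingales average out. Integrating over $[0,T]$ with $\phi^*(0)=-x$ and $Y^*(T)=m$ gives $V_L=\langle Y^*(0),-x\rangle+\mathbb E\langle\phi^*(T),m\rangle+\mathbb E\int_0^T[\langle q,Y^*\rangle+\langle\widetilde\rho_1,Z^*\rangle+\langle\rho_2,\upsilon^*\rangle-\langle\phi^*,\widehat f\rangle]\,ds$.

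The second application removes the boundary term $\mathbb E\langle\phi^*(T),m\rangle=\mathbb E\langle\phi^*(T),\varphi(T)\rangle$: applying It\^o to $\langle\phi^*(s),\varphi(s)\rangle$, with $\phi^*$ given by \eqref{phi} and $\varphi$ by \eqref{L-L-BSDE}, produces $\mathbb E\langle\phi^*(T),m\rangle=\langle\varphi(0),-x\rangle+\mathbb E\int_0^T\mathrm{drift}\langle\phi^*,\varphi\rangle\,ds$. Together with $Y^*(0)=\Sigma(0,i)x+\varphi(0)$, which follows from \eqref{L-P-Y} and $\phi^*(0)=-x$, the two boundary contributions collapse exactly to $\langle\Sigma(0,i)x+2\varphi(0),-x\rangle$, i.e.\ the first term of \eqref{L-value-function}. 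What remains is to substitute \eqref{L-P-Y}, \eqref{L-P-Z} and \eqref{L-optimal-upsilon} into the surviving integrand $\mathrm{drift}\langle\phi^*,\varphi\rangle+\langle q,Y^*\rangle+\langle\widetilde\rho_1,Z^*\rangle+\langle\rho_2,\upsilon^*\rangle-\langle\phi^*,\widehat f\rangle$ and to verify that every term containing $\phi^*$ drops out, leaving the stated quadratic-in-$(\varphi,\theta)$ expression.

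I expect this final cancellation to be the main obstacle. It is a long matrix computation resting on the precise drift coefficients of \eqref{phi} and \eqref{L-L-BSDE}, on the Riccati equation \eqref{L-RE} (which is exactly what annihilates the $\phi^*$-quadratic and $\phi^*$-linear contributions), and on the transpose identities $I-\widetilde T_{11}\widehat{\mathcal T}(\Sigma)^{-1}\Sigma=[\widehat{\mathcal T}(\Sigma)^{-1}]^\top$ and $\widetilde T_{11}\widehat{\mathcal T}(\Sigma)^{-1}=[\widehat{\mathcal T}(\Sigma)^{-1}]^\top\widetilde T_{11}$ already exploited in the proof of Theorem \ref{L-thm-control}. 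A convenient shortcut is the relation $\Sigma\phi^*=\varphi-Y^*$ coming from \eqref{L-P-Y}, which turns terms like $\langle\phi^*,\Sigma q\rangle$ into $\langle\varphi,q\rangle-\langle Y^*,q\rangle$ and so pairs them off against $\langle q,Y^*\rangle$; the same mechanism disposes of $\langle\phi^*,\widehat f\rangle$. Because the regime-switching enters only through the compensated martingale $\mathbf{\gamma}\cdot d\mathbf{\widetilde{N}}$ (which averages out) and through the coupling $\sum_{k}\lambda_{ik}(s)\Sigma(s,k)$ already embedded in \eqref{L-RE}, no extra jump bookkeeping beyond Theorem \ref{L-thm-control} is required.
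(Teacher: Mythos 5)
Your proposal is correct and follows essentially the same route as the paper's proof: the first It\^o computation on $\langle\phi^{*},Y^{*}\rangle$ reproduces exactly the paper's intermediate identity \eqref{L-VALUE-PROOF}, the second on $\langle\phi^{*},\varphi\rangle$ removes the boundary term $\mathbb{E}\langle\varphi(T),\phi^{*}(T)\rangle$, and the final substitution of \eqref{L-P-Y}--\eqref{L-optimal-upsilon} with the transpose identities for $\widehat{\mathcal{T}}(\Sigma)^{-1}$ cancels all $\phi^{*}$-dependence just as in the paper. Your only inessential deviation is attributing the final cancellation to the Riccati equation \eqref{L-RE} itself, whereas in the paper that cancellation rests on the drift structure of \eqref{phi} and \eqref{L-L-BSDE} together with the symmetry relation $\widehat{\mathcal{T}}(\Sigma)^{-1}\Sigma=\Sigma\big[\widehat{\mathcal{T}}(\Sigma)^{-1}\big]^{\top}$, the Riccati equation having already been absorbed into the verification of \eqref{L-FBSDE-2} in Theorem \ref{L-thm-control}.
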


\begin{proof}
Let $u_{2}^{*}$ be the optimal control for the leader. Then, we have
\begin{align*}
V_{L}(m,i)&=J_{L}(m,i;u_{2}^{*}(\cdot))\\
&=\mathbb{E}\left\{2\langle Y^{*}(0),-x \rangle+
\int_{0}^{T}
\left[
\left<
\left(
\begin{matrix}
G &S_{1}^{\top} & S_{2}^{\top} \\
S_{1} & T_{11} & T_{12} \\
S_{2} & T_{21} & T_{22}
\end{matrix}
\right)
\left(
\begin{matrix}
Y^{*} \\
Z^{*} \\
u_{2}^{*}
\end{matrix}
\right),
\left(\begin{matrix}
Y^{*} \\
Z^{*} \\
u_{2}^{*}
\end{matrix}
\right)
\right>+2\left<
\left(
\begin{matrix}
q\\
\rho_{1}\\
\rho_{2}
\end{matrix}
\right),
\left(
\begin{matrix}
Y^{*}\\
Z^{*}\\
u_{2}^{*}
\end{matrix}
\right)
\right>
\right]ds\right\}\\
&=\mathbb{E}\left\{2\langle Y^{*}(0),-x \rangle+
\int_{0}^{T}
\left[
\left<
\left(
\begin{matrix}
G &\widetilde{S}_{1}^{\top} & S_{2}^{\top} \\
\widetilde{S}_{1} & \widetilde{T}_{11} &  0 \\
S_{2} & 0 & T_{22}
\end{matrix}
\right)
\left(
\begin{matrix}
Y^{*} \\
Z^{*} \\
\upsilon^{*}
\end{matrix}
\right),
\left(\begin{matrix}
Y^{*} \\
Z^{*} \\
\upsilon^{*}
\end{matrix}
\right)
\right>+2\left<
\left(
\begin{matrix}
q \\
\widetilde{\rho}_{1} \\
\rho_{2}
\end{matrix}
\right),
\left(
\begin{matrix}
Y^{*} \\
Z^{*} \\
\upsilon^{*}
\end{matrix}
\right)
\right>
\right] ds\right\}\\
&=\mathbb{E}\Big\{
	\int_{0}^{T}\Big[\big<GY^{*}+\widetilde{S}_{1}^{\top}Z^{*}+S_{2}^{\top}\upsilon^{*}+2q,Y^{*}\big>
	+\big<\widetilde{S}_{1}Y^{*}+\widetilde{T}_{11}Z^{*}+2\widetilde{\rho}_{1},Z^{*}\big>+\big<\widehat{H}^{\top}\phi^{*}+\rho_{2},\upsilon^{*}\big>\Big]ds\\
	&\quad+2\langle Y^{*}(0),-x \rangle\Big\},
\end{align*}
where $\left(\phi^{*}, Y^{*}, Z^{*}, \mathbf{\Gamma}^{*},\upsilon^{*}\right)$ are defined in Theorem \ref{L-thm-control} and satisfies equation \eqref{L-FBSDE-2}.
 Therefore, applying the integration by parts formula to $\left<Y^{*},\phi^{*}\right>$ and taking expectation yield that
\begin{align*}
	\mathbb{E}\left<Y^{*}(T),\phi^{*}(T)\right>&=\mathbb{E}\langle Y^{*}(0),-x \rangle
	+\mathbb{E}\Big\{\int_{0}^{T}\Big[
	\big<Y^{*},-\widehat{A}^{\top}\phi^{*}+GY^{*}+\widetilde{S}_{1}^{\top}Z^{*}+S_{2}^{\top}\upsilon^{*}+q\big>\\
	&\quad+\big<\widehat{A}Y^{*}+\widetilde{F}Z^{*}+\widehat{H}\upsilon^{*}+\widehat{f},\phi^{*}\big>+\big<Z^{*},-\widetilde{F}^{\top}\phi^{*}+\widetilde{S}_{1}Y^{*}+\widetilde{T}_{11}Z^{*}+\widetilde{\rho}_{1}\big>\Big]ds\Big\}\\
	&=V_{L}(m,i)-\mathbb{E}\langle Y^{*}(0),-x \rangle-\mathbb{E}\Big\{\int_{0}^{T}\Big[\big<q,Y^{*}\big>+\big<\widetilde{\rho}_{1},Z^{*}\big>+\big<\rho_{2},\upsilon^{*}\big>-\big<\widehat{f},\phi^{*}\big>\Big]ds\Big\}.
\end{align*}
Consequently, we have
\begin{equation}\label{L-VALUE-PROOF}
   V_{L}(m,i)=\mathbb{E}\left<Y^{*}(T),\phi^{*}(T)\right>
+\mathbb{E}\langle Y^{*}(0),-x \rangle
+\mathbb{E}\int_{0}^{T}\Big[\big<q,Y^{*}\big>+\big<\widetilde{\rho}_{1},Z^{*}\big>+\big<\rho_{2},\upsilon^{*}\big>-\big<\widehat{f},\phi^{*}\big>\Big]ds.
\end{equation}
On the other hand,
\begin{align*}
	&\quad\mathbb{E}\left<Y^{*}(T),\phi^{*}(T)\right>
	=\mathbb{E}\left<m,\phi^{*}(T)\right>
	=\mathbb{E}\left<\varphi(T),\phi^{*}(T)\right>\\
	&=\mathbb{E}\left<\varphi(0),-x\right>
	+\mathbb{E}\Big\{\int_{0}^{T}\Big[\big<\big[\widehat{A}-\widehat{\mathcal{H}}(\Sigma)T_{22}^{-1}S_{2}-\widehat{\mathcal{F}}(\Sigma)\widehat{\mathcal{T}}(\Sigma)^{-1}\Sigma \widetilde{S}_{1}+\Sigma G\big]\varphi+\widehat{\mathcal{F}}(\Sigma)\widehat{\mathcal{T}}(\Sigma)^{-1}\theta\\
	&\quad-\widehat{\mathcal{F}}(\Sigma)\widehat{\mathcal{T}}(\Sigma)^{-1}
	\Sigma\widetilde{\rho}_{1}-\widehat{\mathcal{H}}(\Sigma)T_{22}^{-1}\rho_{2}+\Sigma q+\widehat{f},\phi^{*}\big>
	+\big<\varphi,\big[-\widehat{A}^{\top}+\widetilde{S}_{1}^{\top}\widehat{\mathcal{T}}(\Sigma)^{-1}\Sigma\widehat{\mathcal{F}}(\Sigma)^{\top}\\
	&\quad+S_{2}^{\top}T_{22}^{-1}\widehat{\mathcal{H}}(\Sigma)^{\top}-G\Sigma\big]\phi^{*}
	-\big[\widetilde{S}_{1}^{\top}\widehat{\mathcal{T}}(\Sigma)^{-1}\Sigma \widetilde{S}_{1}+S_{2}^{\top}T_{22}^{-1}S_{2}-G\big]\varphi
	+\widetilde{S}_{1}^{\top}\widehat{\mathcal{T}}(\Sigma)^{-1}\theta\\
	&\quad-\widetilde{S}_{1}^{\top}\widehat{\mathcal{T}}(\Sigma)^{-1}\Sigma\widetilde{\rho}_{1}-S_{2}^{\top}T_{22}^{-1}\rho_{2}+q\big>-\big<\theta,\left[\widehat{\mathcal{T}}(\Sigma)^{-1}\right]^{\top}\big[\widehat{\mathcal{F}}(\Sigma)^{\top}\phi^{*}-\widetilde{T}_{11}\theta-\widetilde{S}_{1}\varphi-\widetilde{\rho}_{1}\big]\big>\Big]ds\Big\}\\
	&=\mathbb{E}\left<\varphi(0),-x\right>
	+\mathbb{E}\Big\{\int_{0}^{T}\Big[
	\big<\widehat{\mathcal{F}}(\Sigma)\widehat{\mathcal{T}}(\Sigma)^{-1}\theta-\widehat{\mathcal{F}}(\Sigma)\widehat{\mathcal{T}}(\Sigma)^{-1}
	\Sigma\widetilde{\rho}_{1}-\widehat{\mathcal{H}}(\Sigma)T_{22}^{-1}\rho_{2}+\Sigma q+\widehat{f},\phi^{*}\big>\\
	&\quad-\big<\varphi,\big(\widetilde{S}_{1}^{\top}\widehat{\mathcal{T}}(\Sigma)^{-1}\Sigma \widetilde{S}_{1}+S_{2}^{\top}T_{22}^{-1}S_{2}-G\big)\varphi\big>-\big<\widehat{\mathcal{T}}(\Sigma)^{-1}\theta,\widehat{\mathcal{F}}(\Sigma)^{\top}\phi^{*}-\widetilde{T}_{11}\theta-\widetilde{S}_{1}\varphi-\widetilde{\rho}_{1}\big>\\
	&\quad+\big<\varphi,\widetilde{S}_{1}^{\top}\widehat{\mathcal{T}}(\Sigma)^{-1}\theta
	-\widetilde{S}_{1}^{\top}\widehat{\mathcal{T}}(\Sigma)^{-1}\Sigma\widetilde{\rho}_{1}
	-S_{2}^{\top}T_{22}^{-1}\rho_{2}+q\big>\Big]ds\Big\}.
\end{align*}
Substituting the above equation into \eqref{L-VALUE-PROOF} and recalling \eqref{L-optimal-control}-\eqref{L-P-Z} yields
\begin{align*}
	V_{L}(m,i)&=\mathbb{E}\left<Y^{*}(T),\phi^{*}(T)\right>
	+\mathbb{E}\langle Y^{*}(0),-x \rangle
	+\mathbb{E}\Big\{\int_{0}^{T}\Big[\big<q,Y^{*}\big>+\big<\widetilde{\rho}_{1},Z^{*}\big>+\big<\rho_{2},\upsilon^{*}\big>-\big<\widehat{f},\phi^{*}\big>\Big]ds\Big\}\\
	&=\mathbb{E}\langle \Sigma(0,i)x+2\varphi(0),-x \rangle
	+\mathbb{E}\Big\{\int_{0}^{T}\Big[
	\big<\widehat{\mathcal{F}}(\Sigma)\widehat{\mathcal{T}}(\Sigma)^{-1}\theta-\widehat{\mathcal{F}}(\Sigma)\widehat{\mathcal{T}}(\Sigma)^{-1}\Sigma\widetilde{\rho}_{1}-\widehat{\mathcal{H}}(\Sigma)T_{22}^{-1}\rho_{2}\\
	&\quad+\Sigma q+\widehat{f},\phi^{*}\big>
	-\big<\varphi,\big(\widetilde{S}_{1}^{\top}\widehat{\mathcal{T}}(\Sigma)^{-1}\Sigma \widetilde{S}_{1}+S_{2}^{\top}T_{22}^{-1}S_{2}-G\big)\varphi\big>-\big<\widehat{\mathcal{T}}(\Sigma)^{-1}\theta,\widehat{\mathcal{F}}(\Sigma)^{\top}\phi^{*}-\widetilde{T}_{11}\theta\\
	&\quad-\widetilde{S}_{1}\varphi-\widetilde{\rho}_{1}\big>+\big<\varphi,\widetilde{S}_{1}^{\top}\widehat{\mathcal{T}}(\Sigma)^{-1}\theta
	-\widetilde{S}_{1}^{\top}\widehat{\mathcal{T}}(\Sigma)^{-1}\Sigma\widetilde{\rho}_{1}
	-S_{2}^{\top}T_{22}^{-1}\rho_{2}+q\big>
	+\big<q,-\Sigma\phi^{*}+\varphi\big>\\
	&\quad+\big<\widetilde{\rho}_{1},\widehat{\mathcal{T}}(\Sigma)^{-1}\big[\Sigma\widehat{\mathcal{F}}(\Sigma)^{\top}\phi^{*}-\Sigma \widetilde{S}_{1}\varphi-\Sigma\widetilde{\rho}_{1}+\theta\big]\big>\\
	&\quad+\big<\rho_{2},T_{22}^{-1}\big[\widehat{\mathcal{H}}(\Sigma)^{\top}\phi^{*}-S_{2}\varphi-\rho_{2}\big]\big>-\big<\widehat{f},\phi^{*}\big>\Big]ds\Big\}\\
	&=\mathbb{E}\langle \Sigma(0,i)x+2\varphi(0),-x \rangle
	+\mathbb{E}\Big\{\int_{0}^{T}\Big[
	-\big<\varphi,\big(\widetilde{S}_{1}^{\top}\widehat{\mathcal{T}}(\Sigma)^{-1}\Sigma \widetilde{S}_{1}+S_{2}^{\top}T_{22}^{-1}S_{2}-G\big)\varphi\big>\\
	&\quad+\big<\widehat{\mathcal{T}}(\Sigma)^{-1}\theta,\widetilde{T}_{11}\theta+\widetilde{S}_{1}\varphi+\widetilde{\rho}_{1}\big>+\big<\varphi,\widetilde{S}_{1}^{\top}\widehat{\mathcal{T}}(\Sigma)^{-1}\theta-\widetilde{S}_{1}^{\top}\widehat{\mathcal{T}}(\Sigma)^{-1}\Sigma\widetilde{\rho}_{1}
	-S_{2}^{\top}T_{22}^{-1}\rho_{2}+q\big>\\
	&\quad+\big<q,\varphi\big>
	-\big<\widetilde{\rho}_{1},\widehat{\mathcal{T}}(\Sigma)^{-1}\Sigma \widetilde{S}_{1}\varphi+\widehat{\mathcal{T}}(\Sigma)^{-1}\Sigma\widetilde{\rho}_{1}-\widehat{\mathcal{T}}(\Sigma)^{-1}\theta\big>\\
	&\quad-\big<\rho_{2},T_{22}^{-1}S_{2}\varphi+T_{22}^{-1}\rho_{2}\big>\Big]ds\Big\}\\
	&=\mathbb{E}\langle \Sigma(0,i)x+2\varphi(0),-x \rangle
	+\mathbb{E}\Big\{\int_{0}^{T}\Big[
	-\big<\varphi,\big(\widetilde{S}_{1}^{\top}\widehat{\mathcal{T}}(\Sigma)^{-1}\Sigma \widetilde{S}_{1}+S_{2}^{\top}T_{22}^{-1}S_{2}-G\big)\varphi\big>\\
	&\quad+2\big<\varphi,\widetilde{S}_{1}^{\top}\widehat{\mathcal{T}}(\Sigma)^{-1}\theta-\widetilde{S}_{1}^{\top}\widehat{\mathcal{T}}(\Sigma)^{-1}\Sigma\widetilde{\rho}_{1}
	-S_{2}^{\top}T_{22}^{-1}\rho_{2}+q\big>+\big<\theta,\widetilde{T}_{11}\widehat{\mathcal{T}}(\Sigma)^{-1}\theta\big>\\
	&\quad-\big<\widetilde{\rho}_{1},\widehat{\mathcal{T}}(\Sigma)^{-1}\Sigma\widetilde{\rho}_{1}\big>+2\big<\widetilde{\rho}_{1},\widehat{\mathcal{T}}(\Sigma)^{-1}\theta\big>-\big<\rho_{2},T_{22}^{-1}\rho_{2}\big>\Big]ds\Big\}.
\end{align*}
This completes the proof.
\end{proof}
To sum the Theorem \ref{thm-M-SG-F}, Theorem \ref{L-thm-control} and Theorem \ref{L-thm-value}, we obtain the Stackelberg equilibrium of Problem (M-ZLQ) as follows.
\begin{theorem}\label{thm-main}
Suppose the conditions of the Theorem \ref{L-thm-control} hold. Let $\mathbf{P}(\cdot)$, $\mathbf{\Sigma}(\cdot)$, $\left(\varphi(\cdot),\theta(\cdot),\gamma(\cdot)\right)$ and $\phi^{*}(\cdot)$ be the solution to CDREs \eqref{F-RE}, CDREs \eqref{L-RE}, BSDE \eqref{L-L-BSDE} and SDE \eqref{phi}, respectively.
Then, the Problem (M-ZLQ) admits a unique Stackelberg equilibrium:
\begin{equation}\label{Stackelberg-equilibrium}
  \left\{
  \begin{aligned}
  u_{1}^{*}(s)&=-\widehat{R}_{1}(s, \alpha(s))^{-1}\big[\widehat{S}_{1}(s,\alpha(s))X^{*}(s)+\Xi (s,\alpha(s))u_{2}^{*}(s)+B_{1}(s,\alpha(s))^{\top}Y^{*}(s)+D_{1}(s,\alpha(s))^{\top}Z^{*}(s)\\
		&\quad+D_{1}(s,\alpha(s))^{\top}P(s,\alpha(s))\sigma\big],\\
  u_{2}^{*}(s)&=T_{22}(s,\alpha(s))^{-1}\big[\widehat{\mathcal{H}}\left(\Sigma(s,\alpha(s))\right)^{\top}-T_{21}(s,\alpha(s))\widehat{\mathcal{T}}\left(\Sigma(s,\alpha(s))\right)^{-1}\Sigma(s,\alpha(s))\widehat{\mathcal{F}}\left(\Sigma(s,\alpha(s))\right)^{\top}\big]\phi^{*}(s)\\
&\quad-T_{22}(s,\alpha(s))^{-1}\Big\{T_{21}(s,\alpha(s))\widehat{\mathcal{T}}\left(\Sigma(s,\alpha(s))\right)^{-1}\big[\theta(s)-\Sigma(s,\alpha(s))\widetilde{S}_{1}(s,\alpha(s))\varphi(s)-\Sigma(s,\alpha(s))\widetilde{\rho}_{1}(s)\big]\\
&\quad +S_{2}(s,\alpha(s))\varphi(s)+\rho_{2}(s)\Big\},
  \end{aligned}
  \right.
\end{equation}
where $X^{*}(\cdot)$ is the solution to \eqref{F-optimal-state} by replacing $u_{2}(\cdot)$ with $u_{2}^{*}(\cdot)$ and $\left(Y^{*}(\cdot),Z^{*}(\cdot)\right)$ are defined in \eqref{L-P-Y}-\eqref{L-P-Z}.
In addition, the equilibrium value function is given by
\begin{equation}\label{equilibrium-value-function}
	\begin{aligned}
		V(x,i)&=-V_{L}(m,i)+\mathbb{E}\bigg\{\int_{0}^{T}\big[-\big<\widehat{R}_{1}(s,\alpha(s))^{-1}D_{1}(s,\alpha(s))^{\top}P(s,\alpha(s))\sigma(s),
		D_{1}(s,\alpha(s))^{\top}P(s,\alpha(s))\sigma(s)\big> \\
		&\quad+\big<P(s,\alpha(s))\sigma(s),\sigma(s)\big>\big]ds+\big<P(0,i)x,x\big>\bigg\}.
	\end{aligned}
\end{equation}
\end{theorem}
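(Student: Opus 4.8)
The plan is to assemble Theorem \ref{thm-main} from the three results already in hand, treating it as a synthesis of the follower's and leader's solutions rather than a fresh computation. First I would dispatch the uniqueness claim: under (H1)--(H4), Theorem \ref{thm-M-SG-F} provides, for every announced $u_2(\cdot)$, a unique rational reaction $\bar{u}_1[x,i,u_2](\cdot)$ for the follower, while Proposition \ref{prop-L-open-solvability} provides a unique optimal $u_2^*(\cdot)$ for the leader. Together these force $(u_1^*,u_2^*)=(\bar{u}_1[x,i,u_2^*],u_2^*)$ to be the unique Stackelberg equilibrium. The leader's control $u_2^*$ displayed in \eqref{Stackelberg-equilibrium} is then taken verbatim from \eqref{L-optimal-control} of Theorem \ref{L-thm-control}, with $(\varphi,\theta,\gamma)$ and $\phi^*$ the solutions to \eqref{L-L-BSDE} and \eqref{phi}.

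The one step requiring genuine care is producing $u_1^*$ by specializing the follower's feedback \eqref{F-control} to $u_2=u_2^*$. The subtlety is that \eqref{F-control} involves, through $\widehat{\rho}_1$, the processes $(Y,Z)$ solving the BSDE \eqref{L-state} driven by the announced control, and I must identify these with the $(Y^*,Z^*)$ constructed in Theorem \ref{L-thm-control}. The key observation is that the backward component of the optimality system \eqref{L-FBSDE-2}, combined with the problem-reduction identities \eqref{transformations}--\eqref{L-optimal-control-relation}—which show that \eqref{L-state} and the transformed equation \eqref{L-state-2} share the same adapted solution—forces the triple $(Y^*,Z^*,\mathbf{\Gamma}^*)$ defined in \eqref{L-P-Y}--\eqref{L-P-Gamma} to coincide with the solution of \eqref{L-state} at $u_2=u_2^*$. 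Substituting this $(Y^*,Z^*)$ together with the corresponding optimal forward state $X^*$, which solves \eqref{F-optimal-state} with $u_2$ replaced by $u_2^*$, into \eqref{F-control} reproduces exactly the expression for $u_1^*$ in \eqref{Stackelberg-equilibrium}.

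Finally, for the equilibrium value function I would invoke the Stackelberg definition together with the reduction \eqref{costfunction2}. Since $J(x,i,\bar{u}_1[x,i,u_2],u_2)$ equals a $u_2$-independent quantity minus $J_L(m,i;u_2)$, maximizing over $u_2$ is equivalent to minimizing $J_L$; hence $V(x,i)=\{\text{the bracketed } u_2\text{-independent terms}\}-V_L(m,i)$, where those terms are precisely the expression multiplying $\mathbb{E}$ in \eqref{equilibrium-value-function}. Substituting the explicit $V_L(m,i)$ from Theorem \ref{L-thm-value} then yields \eqref{equilibrium-value-function}. The main obstacle throughout is not any new estimate but the careful bookkeeping that the adjoint pair $(Y^*,Z^*)$ entering $u_1^*$ and entering the value function is the \emph{same} object constructed for the leader's problem; once this identification is secured, the theorem follows by direct substitution.
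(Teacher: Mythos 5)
Your proposal is correct and takes essentially the same route as the paper, which presents Theorem \ref{thm-main} precisely as a synthesis of Theorem \ref{thm-M-SG-F}, Theorem \ref{L-thm-control} and Theorem \ref{L-thm-value}, combined with the reduction \eqref{costfunction2} and the uniqueness guaranteed by Proposition \ref{prop-L-open-solvability}. Your identification of the triple $(Y^{*},Z^{*},\mathbf{\Gamma}^{*})$ from \eqref{L-P-Y}--\eqref{L-P-Gamma} with the unique solution of \eqref{L-state} at $u_{2}=u_{2}^{*}$ (via the relation \eqref{L-optimal-control-relation} and the equivalence of \eqref{L-state} and \eqref{L-state-2}) is sound and in fact makes explicit the one bookkeeping step the paper leaves implicit.
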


\section{The discussion on the solvability of Riccati equation} \label{section-RE-solvability}
\citet{Sun.J.R.2021_IBLQ} has investigated an indefinite BSLQ problem under the diffusion model. They constructed the associated Riccati-type equation solution using the forward formulation and limiting procedure developed in \citet{Lim.A.E.B.2001_BLQ}. It is natural to ask:  Do we establish such solvability for CDREs \eqref{L-RE} by using a similar method? To see this, we consider the following CDREs:
\begin{equation}\label{FSLQ-REs}
\left\{
\begin{aligned}
	&\dot{\mathcal{P}}_{\lambda}\left( s,i \right)= -\mathcal{P}_{\lambda}\left( s,i \right) \widehat{A}\left( s,i \right) -\widehat{A}\left( s,i \right) ^{\top}\mathcal{P}_{\lambda}\left( s,i \right)-G(s,i)
	+\left( \begin{array}{c}
		\widetilde{F}\left( s,i \right) ^{\top}\mathcal{P}_{\lambda}\left( s,i \right) +\widetilde{S}_1\left( s,i \right)\\
		\widehat{H}\left( s,i \right) ^{\top}\mathcal{P}_{\lambda}\left( s,i \right) +S_2\left( s,i \right)\\
	\end{array} \right) ^{\top}\\
	&\qquad\times\left( \begin{matrix}
		\widetilde{T}_{11}\left( s,i \right) +\mathcal{P}_{\lambda}\left( s,i \right)&		0\\
		0&		T_{22}\left( s,i \right)\\
	\end{matrix} \right)^{-1} \left( \begin{array}{c}
		\widetilde{F}\left( s,i \right) ^{\top}\mathcal{P}_{\lambda}\left( s,i \right) +\widetilde{S}_1\left( s,i \right)\\
		\widehat{H}\left( s,i \right) ^{\top}\mathcal{P}_{\lambda}\left( s,i \right) +S_2\left( s,i \right)\\
	\end{array} \right) \\
	&\qquad+\sum_{k=1}^D{\lambda _{ik}(s)\mathcal{P}_{\lambda}\left( s,i \right)
		\mathcal{P}_{\lambda}\left( s,k \right)^{-1}\mathcal{P}_{\lambda}\left( s,i \right) }\\
	&\mathcal{P}_{\lambda}(T,i)=\lambda I, \quad i \in \mathcal{S},
	\quad s \in[0, T].
\end{aligned}
\right.
\end{equation}

Then, we have the following result.

\begin{theorem}\label{thm-F-L-REs}
Let (H1)-(H4) hold. Suppose that, for every $\lambda\geq\lambda_{0}>0$, 
the Riccati equations \eqref{FSLQ-REs} admits a solution $\left[\mathcal{P}_{\lambda}(\cdot,1),\cdots,\mathcal{P}_{\lambda}(\cdot,D)\right]\in \mathcal{D}\left(C(0,T;\mathbb{S}_{+}^{n})\right)$ such that, for every $i\in\mathcal{S}$,
\begin{equation}\label{P-condition}
   	\mathcal{P}_{\lambda}\left( \cdot,i \right)+\widetilde{T}_{11}\left( \cdot,i \right) \gg 0 ,\quad
	\text{and}\quad \mathcal{P}_{\lambda_2}\left( \cdot,i \right)\geq\mathcal{P}_{\lambda_1}\left( \cdot,i \right),
	\qquad \forall \lambda_2>\lambda_1\geq\lambda_0.
\end{equation}
Then CDREs \eqref{L-RE} admits a solution $\mathbf{\Sigma}(\cdot)\equiv \left[\Sigma(\cdot,1),\cdots,\Sigma(\cdot,D)\right]\in\mathcal{D}\left(C(0,T;\overline{\mathbb{S}_{+}^{n}})\right)$ such that 
$$\widehat{\mathcal{T}}\left(\Sigma(\cdot,i)\right)^{-1}\in L^{\infty}(0, T ; \mathbb{R}^{n\times n}),\quad \forall i\in\mathcal{S}$$
\end{theorem}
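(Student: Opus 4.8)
The plan is to realize the solution of the backward CDREs \eqref{L-RE} as the limit of the \emph{inverses} $\Sigma_{\lambda}(\cdot,i):=\mathcal{P}_{\lambda}(\cdot,i)^{-1}$ as $\lambda\to\infty$. Since by hypothesis $\mathcal{P}_{\lambda}(\cdot,i)\in C(0,T;\mathbb{S}_{+}^{n})$ is positive definite, $\Sigma_{\lambda}(\cdot,i)$ is well defined, positive definite and absolutely continuous in $s$ with $\dot{\Sigma}_{\lambda}=-\Sigma_{\lambda}\dot{\mathcal{P}}_{\lambda}\Sigma_{\lambda}$ a.e. The first step is a purely algebraic verification that $\Sigma_{\lambda}$ solves exactly the CDREs \eqref{L-RE}, except with terminal value $\Sigma_{\lambda}(T,i)=\lambda^{-1}I$ in place of $0$. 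Substituting the right-hand side of \eqref{FSLQ-REs} into $-\Sigma_{\lambda}\dot{\mathcal{P}}_{\lambda}\Sigma_{\lambda}$ and repeatedly using $\Sigma_{\lambda}\mathcal{P}_{\lambda}=\mathcal{P}_{\lambda}\Sigma_{\lambda}=I$ together with the identities $(\widetilde{T}_{11}+\mathcal{P}_{\lambda})^{-1}=\widehat{\mathcal{T}}(\Sigma_{\lambda})^{-1}\Sigma_{\lambda}$, $(\widetilde{F}^{\top}\mathcal{P}_{\lambda}+\widetilde{S}_{1})\Sigma_{\lambda}=\widehat{\mathcal{F}}(\Sigma_{\lambda})^{\top}$ and $(\widehat{H}^{\top}\mathcal{P}_{\lambda}+S_{2})\Sigma_{\lambda}=\widehat{\mathcal{H}}(\Sigma_{\lambda})^{\top}$ converts each term of \eqref{FSLQ-REs} into the corresponding term of \eqref{L-RE}. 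In particular the nonstandard coupling term becomes $-\Sigma_{\lambda}\big(\sum_{k}\lambda_{ik}\mathcal{P}_{\lambda}(s,i)\mathcal{P}_{\lambda}(s,k)^{-1}\mathcal{P}_{\lambda}(s,i)\big)\Sigma_{\lambda}=-\sum_{k}\lambda_{ik}\Sigma_{\lambda}(s,k)$, which is exactly the coupling term of \eqref{L-RE}; this is the structural reason \eqref{FSLQ-REs} is written with that particular quadratic coupling.

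Next I extract the limit using the monotonicity in \eqref{P-condition}. Because matrix inversion is operator-antitone on the positive-definite cone, $\mathcal{P}_{\lambda_2}\geq\mathcal{P}_{\lambda_1}$ gives $\Sigma_{\lambda_2}\leq\Sigma_{\lambda_1}$, so $\{\Sigma_{\lambda}(s,i)\}$ is decreasing in $\lambda$ and bounded below by $0$. Moreover $\mathcal{P}_{\lambda_0}(\cdot,i)\in C(0,T;\mathbb{S}_{+}^{n})$ on the compact $[0,T]$ yields a uniform lower bound $\mathcal{P}_{\lambda_0}(s,i)\geq\delta_0 I$, whence $\Sigma_{\lambda}(s,i)\leq\mathcal{P}_{\lambda_0}(s,i)^{-1}\leq\delta_0^{-1}I$ for all $\lambda\geq\lambda_0$. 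Thus $\Sigma_{\lambda}(s,i)$ decreases to a limit $\Sigma(s,i)\in\overline{\mathbb{S}_{+}^{n}}$ pointwise in $(s,i)$, with $0\leq\Sigma\leq\delta_0^{-1}I$ uniformly.

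The crux is to control $\widehat{\mathcal{T}}(\Sigma_{\lambda})^{-1}$ uniformly in $\lambda$ and to pass it to the limit; this is where the condition $\mathcal{P}_{\lambda}+\widetilde{T}_{11}\gg0$ enters. Writing $\widehat{\mathcal{T}}(\Sigma_{\lambda})^{-1}=(\widetilde{T}_{11}+\mathcal{P}_{\lambda})^{-1}\mathcal{P}_{\lambda}=I-(\widetilde{T}_{11}+\mathcal{P}_{\lambda})^{-1}\widetilde{T}_{11}$ and using monotonicity $(\widetilde{T}_{11}+\mathcal{P}_{\lambda})^{-1}\leq(\widetilde{T}_{11}+\mathcal{P}_{\lambda_0})^{-1}$ together with the $L^{\infty}$-bound on $\widetilde{T}_{11}$ gives a bound on $\widehat{\mathcal{T}}(\Sigma_{\lambda})^{-1}$ uniform in $\lambda$ and $s$. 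Since $(\widetilde{T}_{11}+\mathcal{P}_{\lambda})^{-1}$ is monotone and bounded it converges, so $\widehat{\mathcal{T}}(\Sigma_{\lambda})^{-1}$ converges to some $N$; passing to the limit in $\widehat{\mathcal{T}}(\Sigma_{\lambda})^{-1}\widehat{\mathcal{T}}(\Sigma_{\lambda})=\widehat{\mathcal{T}}(\Sigma_{\lambda})\widehat{\mathcal{T}}(\Sigma_{\lambda})^{-1}=I$ and using $\widehat{\mathcal{T}}(\Sigma_{\lambda})\to\widehat{\mathcal{T}}(\Sigma)$ shows $\widehat{\mathcal{T}}(\Sigma)$ is invertible with $\widehat{\mathcal{T}}(\Sigma)^{-1}=N\in L^{\infty}(0,T;\mathbb{R}^{n\times n})$. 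I expect this uniform invertibility to be the main obstacle, since without it both the limiting right-hand side and the dominated-convergence argument below could fail.

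Finally I pass to the limit in the integral form. From the first step, $\Sigma_{\lambda}(s,i)=\lambda^{-1}I-\int_{s}^{T}\mathcal{R}(\Sigma_{\lambda})(r,i)\,dr$, where $\mathcal{R}$ denotes the right-hand side of \eqref{L-RE}. Every term of $\mathcal{R}(\Sigma_{\lambda})$ is a continuous function of $(\Sigma_{\lambda},\widehat{\mathcal{T}}(\Sigma_{\lambda})^{-1})$ with $L^{\infty}$ coefficients (note $T_{22}^{-1}\in L^{\infty}$ by Lemma \ref{L-lem}); by the pointwise convergence $\Sigma_{\lambda}\to\Sigma$, the convergence $\widehat{\mathcal{T}}(\Sigma_{\lambda})^{-1}\to\widehat{\mathcal{T}}(\Sigma)^{-1}$, and the uniform bounds above, $\mathcal{R}(\Sigma_{\lambda})(r,i)\to\mathcal{R}(\Sigma)(r,i)$ for a.e.\ $r$ and is uniformly bounded, hence dominated. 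Dominated convergence then yields $\Sigma(s,i)=-\int_{s}^{T}\mathcal{R}(\Sigma)(r,i)\,dr$, so $\Sigma(\cdot,i)$ is Lipschitz continuous, $\Sigma(T,i)=0$, and $\dot{\Sigma}(s,i)=\mathcal{R}(\Sigma)(s,i)$ a.e.; that is, $\mathbf{\Sigma}\in\mathcal{D}(C(0,T;\overline{\mathbb{S}_{+}^{n}}))$ solves \eqref{L-RE} with $\widehat{\mathcal{T}}(\Sigma(\cdot,i))^{-1}\in L^{\infty}$ for every $i\in\mathcal{S}$, as claimed.
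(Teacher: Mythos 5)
Your proposal is correct and follows essentially the same route as the paper's own proof: set $\Sigma_{\lambda}=\mathcal{P}_{\lambda}^{-1}$, verify algebraically (via $\widehat{\mathcal{T}}(\Sigma_{\lambda})^{-1}\Sigma_{\lambda}=(\widetilde{T}_{11}+\mathcal{P}_{\lambda})^{-1}$ and the collapse of the quadratic coupling term to $-\sum_{k}\lambda_{ik}\Sigma_{\lambda}(\cdot,k)$, exactly as you note) that $\Sigma_{\lambda}$ solves \eqref{L-RE} with terminal value $\lambda^{-1}I$, extract the monotone pointwise limit, and pass to the limit in the integral form by bounded convergence. The only difference is a cosmetic one in the uniform invertibility step: you bound $\widehat{\mathcal{T}}(\Sigma_{\lambda})^{-1}=I-(\widetilde{T}_{11}+\mathcal{P}_{\lambda})^{-1}\widetilde{T}_{11}$ directly via operator-antitonicity of inversion and identify the limit through $\widehat{\mathcal{T}}(\Sigma_{\lambda})^{-1}\widehat{\mathcal{T}}(\Sigma_{\lambda})=I$, whereas the paper establishes a uniform lower bound on $\big(I+\Sigma_{\lambda}\widetilde{T}_{11}\big)\big(I+\Sigma_{\lambda}\widetilde{T}_{11}\big)^{\top}$ through a norm estimate on $\mathcal{P}_{\lambda}\big(\widetilde{T}_{11}+\mathcal{P}_{\lambda}\big)^{-2}\mathcal{P}_{\lambda}$; both arguments rest on the same hypothesis $\mathcal{P}_{\lambda}+\widetilde{T}_{11}\gg 0$ and yield the same $L^{\infty}$ conclusion, with yours being marginally more streamlined.
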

\begin{proof}
For every $i\in\mathcal{S}$, let $\Sigma_{\lambda}(s,i):=
\mathcal{P}_{\lambda}(s,i)^{-1}, s\in[0,T]$.
Then $\Sigma_{\lambda}(s,i)$ decreases in $\lambda$ and is bounded below by zero. Therefore, $\left\{\Sigma_{\lambda}(s,i)\right\}_{\lambda\geq\lambda_0}$ is bounded uniformly in $s\in[0,T]$ and there is a $\overline{\mathbb{S}_{+}^n}$-valued function $\Sigma(\cdot,i)$ such that
$$\Sigma(s,i)=\lim_{\lambda\uparrow\infty}\Sigma_{\lambda}(s,i),\qquad s\in[0,T].$$
Next, we shall prove the following statements hold. 
\begin{enumerate}
	\item  $\widehat{\mathcal{T}}\left(\Sigma(s,i)\right)=I+\Sigma(s,i)\widetilde{T}_{11}(s,i)$ is  invertible a.e. on $[0,T]$;
	\item $\widehat{\mathcal{T}}\left(\Sigma(\cdot,i)\right)^{-1}\in L^{\infty}(0,T;\mathbb{R}^{n\times n})$;
	\item $\mathbf{\Sigma}(\cdot)$ solves the CDREs \eqref{L-RE}.
\end{enumerate}
From the definition of $\Sigma_{\lambda}$, we first observe that
$$I+\Sigma_{\lambda}\widetilde{T}_{11}=\mathcal{P}_{\lambda}^{-1}\big[\mathcal{P}_{\lambda}+\widetilde{T}_{11}\big].$$
Let $\mathcal{K}=\mathcal{P}_{\lambda_0}+\widetilde{T}_{11}$ and $\Delta_{\lambda}=\mathcal{P}_{\lambda}-\mathcal{P}_{\lambda_0}$. Then we have $\mathcal{K}\gg 0$, $\Delta_{\lambda}\geq 0$ and for every $\lambda>\lambda_0$,
$$0<\left(\mathcal{K}+\Delta_{\lambda}\right)^{-1}
=\left(\widetilde{T}_{11} +\mathcal{P}_{\lambda}\right)^{-1}
\leq \left(\widetilde{T}_{11} +\mathcal{P}_{\lambda_0}\right)^{-1}.$$
Hence, for every $x\in \mathbb{R}^{n}$, we have
\begin{align*}
	&\quad\big< \mathcal{P}_{\lambda}\big( \widetilde{T}_{11}+\mathcal{P}_{\lambda} \big) ^{-2}\mathcal{P}_{\lambda}x, x \big>
	\\
	&=\big| \left( \mathcal{K}+\Delta _{\lambda} \right) ^{-1}\left( \Delta _{\lambda}+\mathcal{P}_{\lambda _0} \right)  x \big|^2
	\\
	&\le 2\big| \left( \mathcal{K}+\Delta _{\lambda} \right)  ^{-1}\big(\mathcal{K}+\Delta _{\lambda}-\mathcal{K}\big)x \big|^2+2\big| \left( \mathcal{K}+\Delta _{\lambda} \right)  ^{-1}\mathcal{P}_{\lambda _0} x \big|^2
	\\
	&\le 2\big| x-\left( \mathcal{K}+\Delta _{\lambda} \right) ^{-1}\mathcal{K} x \big|^2+2\big| \big( \mathcal{K}+\Delta _{\lambda} \big)  ^{-1}\mathcal{P}_{\lambda _0}x \big|^2
	\\
	&\le 4\big[ 1+\big| \left( \mathcal{K}+\Delta _{\lambda} \right) ^{-1} \big|^2\big(
		\big| \mathcal{K} \big|^2+\big| \mathcal{P}_{\lambda _0} \big|^2
	 \big) \big] |x|^2
	\\
	&\le 4\big[ 1+\big| \big( \widetilde{T}_{11}+\mathcal{P}_{\lambda _0} \big) ^{-1} \big|^2\big(
		\big| \mathcal{K} \big|^2+\big| \mathcal{P}_{\lambda _0} \big|^2
	 \big) \big] \left| x \right|^2,
\end{align*}
which implies
\begin{align*}
	\big(I+\Sigma_{\lambda}\widetilde{T}_{11}\big)\big(I+\Sigma_{\lambda}\widetilde{T}_{11}\big)^{\top}
	&=\big[\mathcal{P}_{\lambda}\big( \widetilde{T}_{11}+\mathcal{P}_{\lambda} \big) ^{-2}\mathcal{P}_{\lambda}\big]^{-1}\\
	&\geq \frac{1}{4}\big[ 1+\big| \big( \widetilde{T}_{11}+\mathcal{P}_{\lambda _0} \big) ^{-1} \big|^2\big(
		\big| \mathcal{K} \big|^2+\big| \mathcal{P}_{\lambda _0} \big|^2\big) \big]^{-1}I
\end{align*}
Letting $\lambda\rightarrow\infty$ yields
$$\big(I+\Sigma \widetilde{T}_{11}\big)\big(I+\Sigma \widetilde{T}_{11}\big)^{\top}\gg0,$$
This implies $\widehat{\mathcal{T}}\left(\Sigma\right)$ is  invertible and $\widehat{\mathcal{T}}\left(\Sigma\right)^{-1}\in L^{\infty}(0,T;\mathbb{R}^{n\times n})$.

On the other hand, since $\Sigma_{\lambda}(s,i)\mathcal{P}_{\lambda}(s,i)=I$, we can easily obtain that 
$$\dot{\Sigma}_{\lambda}(s,i)\mathcal{P}_{\lambda}(s,i)
+\Sigma_{\lambda}(s,i)\dot{\mathcal{P}}_{\lambda}(s,i)=0.$$
Therefore,
\begin{align*}
	\dot{\Sigma}_{\lambda}(s,i)&=-\Sigma_{\lambda}(s,i)\dot{\mathcal{P}}_{\lambda}(s,i)
	\Sigma_{\lambda}(s,i)\\
	&=\widehat{A}(s,i)\Sigma_{\lambda}(s,i)+\Sigma_{\lambda}(s,i)\widehat{A}(s,i)^{\top}
	+\Sigma_{\lambda}(s,i)G(s,i)\Sigma_{\lambda}(s,i)\\
	&\quad-\widehat{\mathcal{F}}\left(\Sigma_{\lambda}(s,i)\right)\widehat{\mathcal{T}}\left(\Sigma_{\lambda}(s,i)\right)^{-1}
	\Sigma_{\lambda}(s,i)\widehat{\mathcal{F}}\left(\Sigma_{\lambda}(s,i)\right)^{\top}\\
	&\quad-\widehat{\mathcal{H}}\left(\Sigma_{\lambda}(s,i)\right)T_{22}(s,i)^{-1}
	\widehat{\mathcal{H}}\left(\Sigma_{\lambda}(s,i)\right)^{\top}
	-\sum_{k=1}^{D}\lambda_{i,k}(s)\Sigma_{\lambda}(s,k).
\end{align*}
Consequently, integrating both side of the above equation leads to
$$
\begin{aligned}
	\Sigma_{\lambda}(s,i)&=\lambda^{-1}I-\int_{s}^{T}\bigg[
	\widehat{A}(t,i)\Sigma_{\lambda}(t,i)+\Sigma_{\lambda}(t,i)\widehat{A}(t,i)^{\top}
	+\Sigma_{\lambda}(t,i)G(t,i)\Sigma_{\lambda}(t,i)\\
	&\quad-\widehat{\mathcal{F}}\left(\Sigma_{\lambda}(t,i)\right)\widehat{\mathcal{T}}\left(\Sigma_{\lambda}(t,i)\right)^{-1}
	\Sigma_{\lambda}(t,i)\widehat{\mathcal{F}}\left(\Sigma_{\lambda}(t,i)\right)^{\top}
	\\
	&\quad-\widehat{\mathcal{H}}\left(\Sigma_{\lambda}(t,i)\right)T_{22}(t,i)^{-1}
	\widehat{\mathcal{H}}\left(\Sigma_{\lambda}(t,i)\right)^{\top}
	-\sum_{k=1}^{D}\lambda_{i,k}(t)\Sigma_{\lambda}(t,k)\bigg]dt.
\end{aligned}
$$
Letting $\lambda\rightarrow\infty$ in the above equation, we obtain by the bounded convergence theorem that
$$
\begin{aligned}
	\Sigma(s,i)&=-\int_{s}^{T}\bigg[
	\widehat{A}(t,i)\Sigma(t,i)+\Sigma(t,i)\widehat{A}(t,i)^{\top}
	+\Sigma(t,i)G(t,i)\Sigma(t,i)-\sum_{k=1}^{D}\lambda_{ik}(t)\Sigma(t,k)\\
	&\quad-\widehat{\mathcal{F}}\left(\Sigma(t,i)\right)\widehat{\mathcal{T}}\left(\Sigma(t,i)\right)^{-1}
	\Sigma(t,i)\widehat{\mathcal{F}}\left(\Sigma(t,i)\right)^{\top}
	-\widehat{\mathcal{H}}\left(\Sigma(t,i)\right)T_{22}(t,i)^{-1}
	\widehat{\mathcal{H}}\left(\Sigma(t,i)\right)^{\top}
	\bigg]dt,\quad i\in\mathcal{S},
\end{aligned}
$$
which is the integral version of CDREs \eqref{L-RE}. This completes the proof.
\end{proof}

From Theorem \ref{thm-F-L-REs}, we know that the solution to CDREs \eqref{L-RE} can be obtained by taking the limit of the solution to \eqref{FSLQ-REs}. Regrettably, the solvability of CDREs \eqref{FSLQ-REs} can not like \cite{Sun.J.R.2021_IBLQ} be derived from a family of M-FSLQ problems: $\left\{\text{Problem (M-LQ)}_{\lambda} : \lambda\geq\lambda_{0}\right\}$ defined in \eqref{M-FSLQ-cost}-\eqref{M-FSLQ-state}. The main reason for this result is due to the coupling term $\sum_{k=1}^{D}\lambda_{ik}(s)\Sigma(s,k)$ in CDREs \eqref{L-RE}.
In fact, one can further verify that the CDREs \eqref{FSLQ-REs} can be derived from an M-FSLQ problem with the state process
\begin{equation}\label{problem-G-M-FSLQ-state}
	\left\{
	\begin{aligned}
		d\mathcal{X}(s)&=\left[\widehat{A}(s,\alpha(s))\mathcal{X}(s)+\widetilde{F}(s,\alpha(s))u_{1}(s)
		+\widehat{H}(s,\alpha(s))u_{2}(s)\right]ds\\
		&\quad+u_{1}(s)dW(s)+\sum_{k=1}^{D}\lambda_{\alpha(s-)k}(s)^{-\frac{1}{2}}v_{k}(s)d\widetilde{N}(s),\quad s\geq 0,\\
		\mathcal{X}(0)&=x,\qquad \alpha(0)=i,
	\end{aligned}
	\right.
\end{equation}
and cost functional
\begin{equation}\label{problem-G-M-FSLQ}
	\begin{aligned}
		\mathcal{J}_{\lambda}(x,i;\mathbf{u}(\cdot))\triangleq
		\mathbb{E}\left\{
		\int_{0}^{T}
		\left<
		\left(
		\begin{matrix}
			G(s,\alpha(s))&\widetilde{S}_{1}(s,\alpha(s))^{\top}&S_{2}(s,\alpha(s))^{\top}\\
			\widetilde{S}_{1}(s,\alpha(s))&\widetilde{T}_{11}(s,\alpha(s))& 0\\
			S_{2}(s,\alpha(s))&0&T_{22}(s,\alpha(s))
		\end{matrix}
		\right)
		\left(
		\begin{matrix}
			\mathcal{X}(s)\\
			u_{1}(s)\\
			u_{2}(s)
		\end{matrix}
		\right),
		\left(\begin{matrix}
			\mathcal{X}(s)\\
			u_{1}(s)\\
			u_{2}(s)
		\end{matrix}
		\right)
		\right>
		ds+\lambda\left|\mathcal{X}(T)\right|^{2}
		\right\},
	\end{aligned}
\end{equation}
where
$$\mathbf{u}(\cdot)\triangleq\left(u_{1}(\cdot),u_{2}(\cdot),v_{1}(\cdot),\cdots v_{D}(\cdot)\right)\in
L_{\mathcal{P}}^{2}\left(0, T ; \mathbb{R}^n\right)\times
L_{\mathcal{P}}^{2}\left(0, T ; \mathbb{R}^{m_{2}}\right)\times
\mathcal{D}\left(L_{\mathcal{P}}^{2}\left(0, T ; \mathbb{R}^n\right)\right),$$
represents the control of the problem.
This is a more general M-FSLQ problem compared with that studied in \citet{Zhang.X.2021_ILQM} in the sense that the control enters the terms of Markovian jumps, and the solvability to corresponding CDREs remains open.
We conjecture that the CDREs \eqref{FSLQ-REs} admits a unique solution $\mathbf{P_{\lambda}}(\cdot)\equiv\left[\mathcal{P}_{\lambda}(\cdot,1),\cdots,\mathcal{P}_{\lambda}(\cdot,D)\right]\in \mathcal{D}\left(C(0,T;\mathbb{S}_{+}^{n})\right)$ such that condition \eqref{P-condition} under the assumptions (H1)-(H4). However, we have not overcome some technical difficulties in proving such a result, and we hope to come back in our future publications.

As we all know, the solvability of CDREs for classical FSLQ problems has been completely solved. In \citet{Zhang.X.2021_ILQM}, it has shown that the strongly regular solution to CDREs for an FSLQ problem is equivalent to the uniform convexity of the corresponding cost functional. Now, let us observe closer at CDREs \eqref{L-RE}. We can find that the main difference between \eqref{L-RE} and a CDREs derived from the FSLQ problem is the term $\widehat{\mathcal{F}}\left(\Sigma\right)\widehat{\mathcal{T}}\left(\Sigma\right)^{-1}\Sigma\widehat{\mathcal{F}}\left(\Sigma\right)^{\top}$. Therefore, a natural idea arise, that is, can we simplify CDREs \eqref{L-RE} into one derived from the FSLQ problem in some special case? To this end, we introduce the following assumption.

\textbf{(H5)} The state process $X(\cdot)$ for Problem (M-ZLQ) is one-dimensional and the process $\widetilde{T}_{11}(\cdot,i)$ introduced in \eqref{transformations} is invertible almost everywhere for any $i\in\mathcal{S}$.

Clearly, under the assumption (H5), the process $\widehat{A}(\cdot,i),\, \widetilde{F}(\cdot,i),\, G(\cdot,i),\, \widetilde{S}_{1}(\cdot,i),\, \widetilde{T}_{11}(\cdot,i),\, \Sigma(\cdot,i)$ are all one-dimensional. Then we have
\begin{align*}
 &\quad \widehat{\mathcal{F}}\left(\Sigma(s,i)\right)\widehat{\mathcal{T}}\left(\Sigma(s,i)\right)^{-1}\Sigma(s,i)\widehat{\mathcal{F}}\left(\Sigma(s,i)\right)^{\top}  \\
 &=\big[\widetilde{F}(s,i)+\Sigma(s,i)\widetilde{S}_{1}(s,i)\big]^{2}\Sigma(s,i)\big[1+ \Sigma(s,i)\widetilde{T}_{11}(s,i)\big]^{-1}\\
 &=\widetilde{T}_{11}(s,i)^{-1}\big[\widetilde{F}(s,i)+\Sigma(s,i)\widetilde{S}_{1}(s,i)\big]^{2}\big[\Sigma(s,i)+\widetilde{T}_{11}(s,i)^{-1}\big]\big[\widetilde{T}_{11}(s,i)^{-1}+ \Sigma(s,i)\big]^{-1}\\
 &\quad-\widetilde{T}_{11}(s,i)^{-2}\big[\widetilde{F}(s,i)+\Sigma(s,i)\widetilde{S}_{1}(s,i)\big]^{2}\big[\widetilde{T}_{11}(s,i)^{-1}+ \Sigma(s,i)\big]^{-1}\\
 &=\widetilde{T}_{11}(s,i)^{-1}\big[\widetilde{F}(s,i)+\Sigma(s,i)\widetilde{S}_{1}(s,i)\big]^{2}-\big[\widetilde{F}(s,i)+\Sigma(s,i)\widetilde{S}_{1}(s,i)\big]^{2}\big[\widetilde{T}_{11}(s,i)+ \Sigma(s,i)\widetilde{T}_{11}(s,i)^{2}\big]^{-1}\\
 &=\widetilde{T}_{11}(s,i)^{-1}\widetilde{F}(s,i)^{2}+2\widetilde{T}_{11}(s,i)^{-1}\widetilde{F}(s,i)\widetilde{S}_{1}(s,i)\Sigma(s,i)+\widetilde{T}_{11}(s,i)^{-1}\widetilde{S}_{1}(s,i)^{2}\Sigma(s,i)^{2}\\
 &\quad-\big[\widetilde{F}(s,i)+\Sigma(s,i)\widetilde{S}_{1}(s,i)\big]^{2}\big[\widetilde{T}_{11}(s,i)+ \Sigma(s,i)\widetilde{T}_{11}(s,i)^{2}\big]^{-1}
\end{align*}
and
\begin{align*}
 &\quad \widehat{\mathcal{H}}\left(\Sigma(s,i)\right)T_{22}(s,i)^{-1}\widehat{\mathcal{H}}\left(\Sigma(s,i)\right)^{\top}  \\
 &=\big[\widehat{H}(s,i)+\Sigma(s,i)S_{2}(s,i)^{\top}\big]T_{22}(s,i)^{-1}\big[\widehat{H}(s,i)^{\top}+S_{2}(s,i)\Sigma(s,i)\big]\\
 &=\widehat{H}(s,i)T_{22}(s,i)^{-1}\widehat{H}(s,i)^{\top}+\widehat{H}(s,i)T_{22}(s,i)^{-1}S_{2}(s,i)\Sigma(s,i)+\Sigma(s,i)S_{2}(s,i)^{\top}T_{22}(s,i)^{-1}\widehat{H}(s,i)^{\top}\\
 &\quad+\Sigma(s,i)S_{2}(s,i)^{\top}T_{22}(s,i)^{-1}S_{2}(s,i)\Sigma(s,i).
\end{align*}
Let
\begin{align*}
  &\mathbf{\mathcal{A}}(s,i)\triangleq\widetilde{T}_{11}(s,i)^{-1}\widetilde{F}(s,i)\widetilde{S}_{1}(s,i)+S_{2}(s,i)^{\top}T_{22}(s,i)^{-1}\widehat{H}(s,i)^{\top}-\widehat{A}(s,i),\\
  &\mathbf{\mathcal{B}}(s,i)\triangleq\widetilde{S}_{1}(s,i), \quad \mathbf{\mathcal{D}}(s,i)=\mathbf{\mathcal{R}_{2}}(s,i)\triangleq \widetilde{T}_{11}(s,i),\quad \mathbf{\mathcal{S}}(s,i)\triangleq\widetilde{F}(s,i)\\
  &\mathbf{\mathcal{Q}}(s,i)\triangleq\widetilde{T}_{11}(s,i)^{-1}\widetilde{F}(s,i)^{2}+ \widehat{H}(s,i)T_{22}(s,i)^{-1}\widehat{H}(s,i)^{\top},\\
  &\mathbf{\mathcal{R}_{1}}(s,i)\triangleq G(s,i)-\widetilde{T}_{11}(s,i)^{-1}\widetilde{S}_{1}(s,i)^{2}-S_{2}(s,i)^{\top}T_{22}(s,i)^{-1}S_{2}(s,i).
\end{align*}
Then CDREs \eqref{L-RE} can be rewritten as follows:
\begin{equation}\label{L-RE-rewritten}
    \left\{
    \begin{aligned}
    \dot{\Sigma}(s,i)&=-\Sigma(s,i)\mathbf{\mathcal{A}}(s,i)-\mathbf{\mathcal{A}}(s,i)^{\top}\Sigma(s,i)-\mathbf{\mathcal{Q}}(s,i)-\sum_{k=1}^{D}\lambda_{ik}(s)\Sigma(s,k)
    +\Sigma(s,i)\mathbf{\mathcal{R}_{1}}(s,i)^{-1}\Sigma(s,i)\\
    &\quad +\big[\mathbf{\mathcal{B}}(s,i)^{\top}\Sigma(s,i)+\mathbf{\mathcal{S}}(s,i)\big]^{\top}\big[\mathbf{\mathcal{R}_{2}}(s,i)+\mathbf{\mathcal{D}}(s,i)^{\top}\Sigma(s,i)\mathbf{\mathcal{D}}(s,i)\big]^{-1}\big[\mathbf{\mathcal{B}}(s,i)^{\top}\Sigma(s,i)+\mathbf{\mathcal{S}}(s,i)\big],\\
    \Sigma(T,i)&=0.
    \end{aligned}
    \right.
\end{equation}

We point out that the CDREs \eqref{L-RE-rewritten} is closely related to an FSLQ problem, whose state process and cost functional are given by:
\begin{equation}\label{related-state}
 \left\{
 \begin{aligned}
 &d\mathbb{X}(s)=\left[\mathbf{\mathcal{A}}(s,\alpha(s))\mathbb{X}(s)+u_{1}(s)+\mathbf{\mathcal{B}}(s,\alpha(s))u_{2}(s)\right]ds+\mathbf{\mathcal{D}}(s,\alpha(s))u_{2}(s)dW(s),\quad s\in[0,T]\\
 &\mathbb{X}(0)=x,\quad \alpha(0)=i,
 \end{aligned}
 \right.
\end{equation}
and
\begin{equation}\label{related-cost-functional}
\begin{aligned}
\mathbb{J}(x,i,\mathbf{u}(\cdot))=\mathbb{E}\int_{0}^{T}\Big[&\big<\mathbf{\mathcal{Q}}(s,\alpha(s))\mathbb{X}(s),\mathbb{X}(s)\big>
+\big<\mathbf{\mathcal{R}_{1}}(s,\alpha(s))u_{1}(s),u_{1}(s)\big>+\big<\mathbf{\mathcal{R}_{2}}(s,\alpha(s))u_{2}(s),u_{2}(s)\big>\\
&+2\big<\mathbf{\mathcal{S}}(s,\alpha(s))\mathbb{X}(s),u_{2}(s)\big>\Big]ds,
\end{aligned}
\end{equation}
where $\mathbf{u}(\cdot)\triangleq [u_{1}(\cdot)^{\top},u_{2}(\cdot)^{\top}]^{\top}$ represents the control pair of the problem. For simplicity, we denote a problem with state \eqref{related-state} and cost functional \eqref{related-cost-functional} as \textbf{Problem (M-LQ)}. The following result provides a relation between the solvability of CDREs \eqref{L-RE-rewritten} and the uniform convexity of cost functional for Problem (M-LQ).

\begin{theorem}\label{RE-solvability}
 The CDREs \eqref{L-RE-rewritten} admits a unique solution $\mathbf{\Sigma}(\cdot)\equiv\left[\Sigma(s,1),\cdots,\Sigma(s,D)\right]$ such that
\begin{equation}\label{RE-1}
  \mathbf{\mathcal{R}_{1}}(\cdot,i)^{-1}\gg 0 \quad \text{and} \quad \mathbf{\mathcal{R}_{2}}(\cdot,i)+\mathbf{\mathcal{D}}(\cdot,i)^{\top}\Sigma(\cdot,i)\mathbf{\mathcal{D}}(\cdot,i)\gg 0,\quad \forall i\in\mathcal{S},
\end{equation}
if and only if the Problem (M-LQ) is uniformly convex, that is,
$$\mathbb{J}(0,i;\mathbf{u}(\cdot))\geq \delta \mathbf{E}\int_{0}^{T}\left[\big|u_{1}(s)\big|^{2}+\big|u_{2}(s)\big|^{2}\right]ds, \quad \text{for some } \delta>0.$$
\end{theorem}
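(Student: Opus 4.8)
The plan is to recognize that the CDREs \eqref{L-RE-rewritten} are precisely the coupled Riccati equations associated with the auxiliary FSLQ Problem (M-LQ), and then to invoke the equivalence between strongly regular solvability of such a Riccati system and uniform convexity of the corresponding cost functional established in \citet{Zhang.X.2021_ILQM}. First I would cast Problem (M-LQ) into the standard regime-switching FSLQ template by stacking the two controls into $\mathbf{u}=[u_{1}^{\top},u_{2}^{\top}]^{\top}$ and reading off the coefficients from \eqref{related-state}--\eqref{related-cost-functional}: the drift coefficients $A(\cdot,i)=\mathbf{\mathcal{A}}(\cdot,i)$ and $B(\cdot,i)=[I,\ \mathbf{\mathcal{B}}(\cdot,i)]$, the diffusion coefficients $C(\cdot,i)=0$ and $D(\cdot,i)=[0,\ \mathbf{\mathcal{D}}(\cdot,i)]$, the running weights $Q(\cdot,i)=\mathbf{\mathcal{Q}}(\cdot,i)$, $S(\cdot,i)=[0,\ \mathbf{\mathcal{S}}(\cdot,i)^{\top}]^{\top}$ and $R(\cdot,i)=\mathrm{diag}(\mathbf{\mathcal{R}_{1}}(\cdot,i),\mathbf{\mathcal{R}_{2}}(\cdot,i))$, together with the vanishing terminal weight $M(\cdot,i)=0$.

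With these identifications the generic Riccati operator collapses cleanly, and the decisive computation is the feedback term. Since $C=0$ and $D=[0,\ \mathbf{\mathcal{D}}]$, one has the block-diagonal matrix $R+D^{\top}\Sigma D=\mathrm{diag}\big(\mathbf{\mathcal{R}_{1}},\ \mathbf{\mathcal{R}_{2}}+\mathbf{\mathcal{D}}^{\top}\Sigma\mathbf{\mathcal{D}}\big)$ and $\Sigma B+C^{\top}\Sigma D+S^{\top}=[\Sigma,\ \Sigma\mathbf{\mathcal{B}}+\mathbf{\mathcal{S}}^{\top}]$, so the gain term $(\Sigma B+\cdots)(R+D^{\top}\Sigma D)^{-1}(B^{\top}\Sigma+\cdots)$ splits into the sum $\Sigma\mathbf{\mathcal{R}_{1}}^{-1}\Sigma+[\mathbf{\mathcal{B}}^{\top}\Sigma+\mathbf{\mathcal{S}}]^{\top}[\mathbf{\mathcal{R}_{2}}+\mathbf{\mathcal{D}}^{\top}\Sigma\mathbf{\mathcal{D}}]^{-1}[\mathbf{\mathcal{B}}^{\top}\Sigma+\mathbf{\mathcal{S}}]$, which is exactly the pair of quadratic terms on the right-hand side of \eqref{L-RE-rewritten}; the remaining linear and coupling terms (including the switching term $-\sum_{k=1}^{D}\lambda_{ik}\Sigma(\cdot,k)$) match the FSLQ Riccati template term by term. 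In the same step I would note that the strong regularity requirement $R+D^{\top}\Sigma D\gg0$ used in \citet{Zhang.X.2021_ILQM} is, by the block-diagonal structure, equivalent to the two conditions in \eqref{RE-1}, namely $\mathbf{\mathcal{R}_{1}}(\cdot,i)^{-1}\gg0$ (equivalently $\mathbf{\mathcal{R}_{1}}(\cdot,i)\gg0$, since $\mathbf{\mathcal{R}_{1}}$ and its inverse are both essentially bounded) together with $\mathbf{\mathcal{R}_{2}}(\cdot,i)+\mathbf{\mathcal{D}}(\cdot,i)^{\top}\Sigma(\cdot,i)\mathbf{\mathcal{D}}(\cdot,i)\gg0$.

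Having matched the equations and the regularity conditions, I would then apply the equivalence theorem for the regime-switching indefinite FSLQ problem (Theorem~6.3 and its companion results in \citet{Zhang.X.2021_ILQM}): the cost functional of Problem (M-LQ) is uniformly convex, i.e.\ $\mathbb{J}(0,i;\mathbf{u}(\cdot))\geq\delta\,\mathbb{E}\int_{0}^{T}\big(|u_{1}(s)|^{2}+|u_{2}(s)|^{2}\big)\,ds$ for some $\delta>0$, if and only if the associated CDREs admit a unique strongly regular solution $\mathbf{\Sigma}(\cdot)\in\mathcal{D}\big(C(0,T;\mathbb{S}^{n})\big)$ with $R+D^{\top}\Sigma D\gg0$. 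Through the correspondence of the previous paragraph this is exactly the statement that \eqref{L-RE-rewritten} admits a unique solution satisfying \eqref{RE-1}, which yields both implications of the theorem simultaneously.

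The main obstacle I anticipate is not conceptual but one of careful bookkeeping: verifying that the block reduction of the generic Riccati operator genuinely reproduces the exact form \eqref{L-RE-rewritten} --- in particular that the cross term $S=[0,\ \mathbf{\mathcal{S}}^{\top}]^{\top}$ feeds only into the $u_{2}$-block and leaves the $u_{1}$-gain as the clean expression $\Sigma\mathbf{\mathcal{R}_{1}}^{-1}\Sigma$ --- and confirming that Problem (M-LQ) falls within the structural hypotheses under which the cited equivalence is proved, namely that indefiniteness of $\mathbf{\mathcal{R}_{1}}$ and $\mathbf{\mathcal{R}_{2}}$ is permitted and that the absence of a terminal weight ($M=0$) and of the state in the diffusion ($C=0$) cause no difficulty. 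Once these routine verifications are carried out, the theorem follows directly from \citet{Zhang.X.2021_ILQM}.
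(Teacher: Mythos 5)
Your proposal is correct and follows essentially the same route as the paper: the paper's proof makes exactly the same coefficient identifications $\mathbb{A}=\mathbf{\mathcal{A}}$, $\mathbb{B}=[I,\mathbf{\mathcal{B}}]$, $\mathbb{C}=0$, $\mathbb{D}=[0,\mathbf{\mathcal{D}}]$, $\mathbb{Q}=\mathbf{\mathcal{Q}}$, $\mathbb{S}=[0,\mathbf{\mathcal{S}}^{\top}]^{\top}$, $\mathbb{R}=\mathrm{diag}(\mathbf{\mathcal{R}_{1}},\mathbf{\mathcal{R}_{2}})$, verifies that \eqref{L-RE-rewritten} is the standard regime-switching FSLQ Riccati system under this block structure, and then cites \citet{Zhang.X.2021_ILQM} for the equivalence with uniform convexity. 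Your additional remarks on the block-diagonal form of $\mathbb{R}+\mathbb{D}^{\top}\Sigma\mathbb{D}$ and on $\mathbf{\mathcal{R}_{1}}\gg 0$ versus $\mathbf{\mathcal{R}_{1}}^{-1}\gg 0$ only make explicit what the paper leaves implicit.
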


\begin{proof}
Let
\begin{align*}
    &\mathbb{A}(s,i)=\mathbf{\mathcal{A}}(s,i),\quad \mathbb{B}(s,i)=[I,\mathbf{\mathcal{B}}(s,i)],\quad\mathbb{C}(s,i)=0,\quad \mathbb{D}(s,i)=[0,\mathbf{\mathcal{D}}(s,i)],\quad \mathbb{Q}(s,i)=\mathbf{\mathcal{Q}}(s,i),\\
    &\mathbb{S}(s,i)=\left[\begin{matrix} 0\\\mathbf{\mathcal{S}}(s,i) \end{matrix}\right],\quad\mathbb{R}(s,i)=\left[\begin{matrix} \mathbf{\mathcal{R}}_{1}(s,i) & 0\\0 & \mathbf{\mathcal{R}}_{2}(s,i) \end{matrix}\right].
\end{align*}
Then the state process \eqref{related-state} and cost functional \eqref{related-cost-functional} can be rewritten as
\begin{equation}\label{related-state-2}
    \left\{
    \begin{aligned}
        &d\mathbb{X}(s)=\big[\mathbb{A}(s,\alpha(s))\mathbb{X}(t)+\mathbb{B}(s,\alpha(s))\mathbf{u}(s)\big]ds+\big[\mathbb{C}(s,\alpha(s))\mathbb{X}(t)+\mathbb{D}(s,\alpha(s))\mathbf{u}(s)\big]dW(s),\\
        &\mathbb{X}(0)=x,\quad \alpha(0)=i,
    \end{aligned}
    \right.
\end{equation}
and
\begin{equation}
    \mathbb{J}(x,i,\mathbf{u})=\mathbb{E}\int_{0}^{T}\left<\left(\begin{matrix}
  \mathbb{Q}(s,\alpha(s)) & \mathbb{S}(s,\alpha(s))^{\top} \\ \mathbb{S}(s,\alpha(s))^{\top}  &  \mathbb{R}(s,\alpha(s))
    \end{matrix}\right)
    \left(\begin{matrix}
    \mathbb{X}(s)\\ \mathbf{u}(s)
    \end{matrix}\right),\left(\begin{matrix}
    \mathbb{X}(s)\\ \mathbf{u}(s)
    \end{matrix}\right)\right>ds.
\end{equation}
Further, one can easily verify that CDREs \eqref{L-RE-rewritten} is equivalent to the following form
\begin{equation}
\left\{
    \begin{aligned}
        \Sigma(s,i)&=-\Sigma(s,i)\mathbb{A}(s,i)-\mathbb{A}(s,i)^{\top}\Sigma(s,i)-\mathbb{C}(s,i)^{\top}\Sigma(s,i)\mathbb{C}(s,i)-\mathbb{Q}(s,i)-\sum_{k=1}^{D}\lambda_{ik}\Sigma(s,k)\\
        &\quad +\left[\Sigma(s,i)\mathbb{B}(s,i)+\mathbb{C}(s,i)^{\top}\Sigma(s,i)\mathbb{D}(s,i)+\mathbb{S}(s,i)^{\top}\right]\left[\mathbb{R}(s,i)+\mathbb{D}(s,i)^{\top}\Sigma(s,i)\mathbb{D}(s,i)\right]^{-1}\\
        &\quad \times \left[\Sigma(s,i)\mathbb{B}(s,i)+\mathbb{C}(s,i)^{\top}\Sigma(s,i)\mathbb{D}(s,i)+\mathbb{S}(s,i)^{\top}\right]^{\top},\\
        \Sigma(T,i)&=0.
    \end{aligned}
    \right.
\end{equation}
Hence, the desired result follows from \citet{Zhang.X.2021_ILQM} directly.
\end{proof}

\begin{remark}\label{rmk-RE-solvability}
We point out that the results in Theorem \ref{RE-solvability}  are still valid for the multi-dimensional case of {\rm Problem (M-LQ)}. 
Moreover,
if
$$
\left[
\begin{matrix}
\mathbf{\mathcal{Q}}(\cdot,i) & 0 & \mathbf{\mathcal{S}}(\cdot,i)^{\top}\\
0 & \mathbf{\mathcal{R}_{1}}(\cdot,i) & 0\\
\mathbf{\mathcal{S}}(\cdot,i) & 0 & \mathbf{\mathcal{R}_{2}}(\cdot,i)
\end{matrix}
\right]\gg 0,\quad \forall i\in\mathcal{S},
$$
then the CDREs \eqref{L-RE-rewritten} admits a unique solution $\mathbf{\Sigma}(\cdot)$ satisfying \eqref{RE-1}.
\end{remark}

\section{Examples}\label{section-example}
This section presents two concrete examples to illustrate the results of previous sections. For simplicity,  we suppose that both state process and control processes are one-dimensional and the Markovian chain $\alpha(\cdot)$ only has two states $\mathcal{S}=\{1,2\}$ with time invariant generator
\begin{equation}\label{generator}
 \left[\begin{matrix}
\lambda_{11}(s) & \lambda_{12}(s)\\
\lambda_{21}(s) & \lambda_{22}(s)
\end{matrix}\right]\equiv
\left[\begin{matrix}
-0.5 & 0.5 \\ 0.7 & -0.7
\end{matrix}\right].
\end{equation}

The first example below provides a special case under which the Problem (M-ZLQ) admits a Stackelberg equilibrium and both CDREs \eqref{F-RE} for the follower's problem and CDREs \eqref{L-RE} for the leader's problem are explicitly solvable.
\begin{example}\rm
 Consider the following state process
\begin{equation}\label{example1-state}
\left\{
\begin{aligned}
    &dX(s)=B_{2}(\alpha_{s})u_{2}(s)ds+D_{1}(\alpha_{s})u_{1}(s)dW(s),\quad s\in[0,1],\\
    &X(0)=x,\quad \alpha_{0}=i,
 \end{aligned}
 \right.
\end{equation}
and criterion functional
\begin{equation}\label{example1-functional}
 J(x,i;u_{1}(\cdot),u_{2}(\cdot))=\mathbb{E}\left[\int_{0}^{1}\left(R_{1}(\alpha_{s})\big|u_{1}(s)\big|^{2}+R_{2}(\alpha_{s})\big|u_{2}(s)\big|^{2}\right)ds-\big|X(1)\big|^{2}\right],
\end{equation}
where the coefficients in state process and weighting matrices in performance functional are only depend on Markovian chain $\alpha(\cdot)$ and given by:
$$
\left[\begin{matrix}
B_{2}(1)\\B_{2}(2)
\end{matrix}\right]=
\left[\begin{matrix}
1\\-2
\end{matrix}\right],\quad
\left[\begin{matrix}
D_{1}(1)\\D_{1}(2)
\end{matrix}\right]=
\left[\begin{matrix}
2\\1
\end{matrix}\right],\quad
\left[\begin{matrix}
R_{1}(1)\\R_{1}(2)
\end{matrix}\right]=
\left[\begin{matrix}
5\\2
\end{matrix}\right],\quad
\left[\begin{matrix}
R_{2}(1)\\R_{2}(2)
\end{matrix}\right]=
\left[\begin{matrix}
-1\\-4
\end{matrix}\right].$$
Then we can verify that
\begin{align*}
J(0,i;u_{1}(\cdot),0)&=\mathbb{E}\left[\int_{0}^{1}R_{1}(\alpha_{s})\big|u_{1}(s)\big|^{2}ds-\left(\int_{0}^{1}D_{1}(\alpha_{s})u_{1}(s)dW(s)\right)^{2}\right]\\
&=\mathbb{E}\left[\int_{0}^{1}\left(R_{1}(\alpha_{s})-D_{1}(\alpha_{s})^{2}\right)\big|u_{1}(s)\big|^{2}ds\right]\\
&=\mathbb{E}\int_{0}^{1}\big|u_{1}(s)\big|^{2}ds,
\end{align*}
and
\begin{align*}
J(0,i;0,u_{2}(\cdot))&\leq\mathbb{E}\int_{0}^{1}R_{2}(\alpha_{s})\big|u_{2}(s)\big|^{2}ds\leq -\mathbb{E}\int_{0}^{1}\big|u_{1}(s)\big|^{2}ds,
\end{align*}
which implies the uniform convexity-concavity conditions (H3)-(H4) hold.  Consequently, the game admits a unique Stackelberg equilibrium  $\left(u_{1}^{*},u_{2}^{*}\right)$. Next, we will derive the explicit representation of $\left(u_{1}^{*},u_{2}^{*}\right)$ using the obtained results in previous sections.

We first note that the associated CDREs \eqref{F-RE} in this special case are given by:
\begin{equation}\label{example1-F-RE}
    \left\{
    \begin{aligned}
        &\dot{P}(s,1)=0.5 P(s,1)-0.5 P(s,2)\\
        &\dot{P}(s,1)=-0.7 P(s,1)+0.7 P(s,2)\\
        &P(1,1)=P(1,2)=-1.
    \end{aligned}
    \right.
\end{equation}
Clearly, the unique solution to \eqref{example1-F-RE} is $P(s,1)=P(s,2)\equiv -1$. Then by Theorem \ref{thm-M-SG-F}, we have
\begin{equation}\label{example1-F-rational-reaction}
    \bar{u}_{1}[x,i,u_{2}](s)=-D_{1}(\alpha_{s})^{\top}Z^{u_{2}}(s),
\end{equation}
where $\left(Y^{u_{2}}(\cdot),Z^{u_{2}}(\cdot),\mathbf{\Gamma}^{u_{2}}(\cdot)\right)$ solves the BSDE:
\begin{equation}\label{example1-L-state}
    \left\{
    \begin{aligned}
       &dY^{u_{2}}(s)=\widehat{H}(\alpha_{s})u_{2}(s)dt+Z^{u_{2}}(s)dW(s)+\mathbf{\Gamma}^{u_{2}}(s)\cdot d\mathbf{\widetilde{N}}(s), \quad s\in[0,1],\\
       &Y^{u_{2}}(1)=0.
    \end{aligned}
    \right.
\end{equation}
In addition,
one has
$$J(x,i;\bar{u}_{1}[x,i,u_{2}](\cdot),u_{2}(\cdot))=-J_{L}(u_{2}(\cdot))-x^{2},$$
where
\begin{equation}\label{example1-L-cost}
J_{L}(i,u_{2}(\cdot))=\mathbb{E}\int_{0}^{1}\left[\left<T_{11}(\alpha_{s})Z(s),Z(s)\right>+\left<T_{22}(\alpha_{s})u_{2}(s),u_{2}(s)\right>+2\left<S_{2}(\alpha_{s})u_{2}(s),Y(s)\right>\right]ds-2\left<Y(0),x\right>.
\end{equation}
In the above, the coefficients $\widehat{H},\, T_{11},\, T_{22},\, S_{2}$ are given by
\begin{align*}
    \left[\begin{matrix}
    \widehat{H}(1)\\\widehat{H}(2)
    \end{matrix}\right]=
    \left[\begin{matrix}
    1\\-2
    \end{matrix}\right],\quad
    \left[\begin{matrix}
    T_{11}(1)\\T_{11}(2)
    \end{matrix}\right]=
    \left[\begin{matrix}
    4\\1
    \end{matrix}\right],\quad
    \left[\begin{matrix}
    T_{22}(1)\\T_{22}(2)
    \end{matrix}\right]=
    \left[\begin{matrix}
    1\\4
    \end{matrix}\right],\quad
    \left[\begin{matrix}
    S_{2}(1)\\S_{2}(2)
    \end{matrix}\right]=
    \left[\begin{matrix}
    -1\\2
    \end{matrix}\right].
\end{align*}
Hence, the leader's problem now is to find the optimal control to minimize the cost functional \eqref{example1-L-cost} with state constraint \eqref{example1-L-state}. Let $u_{2}^{*}$ be the optimal control for the leader. Then, according to Theorem \ref{L-openloop-solvability}, the optimality system for the leader's problem is given by
\begin{equation}\label{example1-L-optimality-system}
\left\{
\begin{aligned}
&d\phi^{*}(s)=S_{2}(\alpha_{s})u_{2}^{*}(s)ds+T_{11}(\alpha_{s})Z^{*}(s)dW(s),\\
&dY^{*}(s)=\widehat{H}(\alpha_{s})u_{2}^{*}(s)ds+Z^{*}(s)dW(s)+\mathbf{\Gamma}^{*}(s)d\widetilde{N}(s),\\
&-\widehat{H}(\alpha_{s})\phi^{*}(s)+S_{2}(\alpha_{s})Y^{*}(s)+T_{22}(\alpha_{s})u_{2}^{*}(s)=0,\\
&\phi^{*}(0)=-x, \quad Y^{*}(1)=0,\quad  \alpha_{0}=i.
\end{aligned}
\right.
\end{equation}
To decouple the above optimality system, we only need to make the ansatz $Y^{*}(s)=-\Sigma(s,\alpha_{s})\phi^{*}(s)$.  From \eqref{L-RE}, we know that $\Sigma(s,\alpha_{s})$ is determined by the following coupled differential equations:
\begin{equation}\label{example1-L-RE}
    \left\{
    \begin{aligned}
    &\dot{\Sigma}(s,1)=-\left[1-\Sigma(s,1)\right]^{2}+0.5 \Sigma(s,1)-0.5 \Sigma (s,2),\\
    &\dot{\Sigma}(s,2)=-\left[1-\Sigma(s,2)\right]^{2}-0.7 \Sigma(s,1)+0.7 \Sigma (s,2),\\
    &\Sigma(1,1)=\Sigma(1,2)=0.
    \end{aligned}
    \right.
\end{equation}
Solving the above differential equation, we obtain
$$\Sigma(s,1)=\Sigma(s,2)=\frac{s-1}{s-2},\quad s\in[0,1].$$
Then, by Theorem \ref{thm-main}, the Stackelberg equilibrium is given by
\begin{equation}\label{example1-Stackelberg-equilibrium}
    \left\{
    \begin{aligned}
    &u_{1}^{*}(s)\equiv 0,\\
    &u_{2}^{*}(s)=T_{22}(\alpha_{s})^{-1}\left[\widehat{H}(\alpha_{s})+\frac{s-1}{s-2}S_{2}(\alpha_{s})\right]\phi^{*}(s),
    \end{aligned}
    \right.
\end{equation}
where
\begin{equation}\label{example1-phi}
   \left\{
   \begin{aligned}
   &d\phi^{*}(s)=S_{2}(\alpha_{s})T_{22}(\alpha_{s})^{-1}\left[\widehat{H}(\alpha_{s})+\frac{s-1}{s-2}S_{2}(\alpha_{s})\right]\phi^{*}(s)ds,\\
   &\phi^{*}(0)=-x,
   \end{aligned}
   \right.
\end{equation}
and the equilibrium value function is
\begin{equation}\label{example1-equilibrium-value-function}
    V(x,i)=\frac{1}{2}x-x^{2}.
\end{equation}
\end{example}

The next example shows how to use the derived results to solve a BSLQ problem.
\begin{example}\rm
Consider the following state process
\begin{equation}\label{example2-state}
\left\{
\begin{aligned}
    &dY(s)=\left[\widehat{A}(\alpha_{s})Y(s)+\widehat{C}(\alpha_{s})Z(s)+\widehat{H}(\alpha_{s})u(s)\right]ds+Z(s)dW(s)+\mathbf{\Gamma}(s)\cdot d\mathbf{\widetilde{N}}(s),\quad s\in[0,1],\\
    &Y(1)=m\in L_{\mathcal{F}_{1}}^{2}(\mathbb{R}),\quad \alpha_{0}=i,
 \end{aligned}
 \right.
\end{equation}
and cost functional
\begin{equation}\label{example2-functional}
 J(x,i;u(\cdot))=\mathbb{E}\int_{0}^{1}\left< \left[\begin{matrix}
 G(\alpha_{s}) & S_{1}(\alpha_{s})^{\top} & S_{2}(\alpha_{s})^{\top}\\
 S_{1}(\alpha_{s}) & T_{11}(\alpha_{s}) & T_{12}(\alpha_{s})\\
 S_{2}(\alpha_{s}) & T_{22}(\alpha_{s}) & T_{22}(\alpha_{s})
 \end{matrix}\right]\left[\begin{matrix}
 Y(s)\\Z(s)\\u(s)
 \end{matrix}\right], \left[\begin{matrix}
 Y(s)\\Z(s)\\u(s)
 \end{matrix}\right]\right>ds,
\end{equation}
where
$$
\begin{array}{c}
  \left[\begin{matrix}
  \widehat{A}(1)\\\widehat{A}(2)
  \end{matrix}\right]= \left[\begin{matrix}
  1\\-1
  \end{matrix}\right],\quad
   \left[\begin{matrix}
  \widehat{C}(1)\\ \widehat{C}(2)
  \end{matrix}\right]= \left[\begin{matrix}
  2\\1
  \end{matrix}\right],\quad
   \left[\begin{matrix}
  \widehat{H}(1)\\ \widehat{H}(2)
  \end{matrix}\right]= \left[\begin{matrix}
  -4\\2
  \end{matrix}\right],\\[5mm]
   \left[\begin{matrix}
 G(1) & S_{1}(1)^{\top} & S_{2}(1)^{\top}\\
 S_{1}(1) & T_{11}(1) & T_{12}(1)\\
 S_{2}(1) & T_{21}(1) & T_{22}(1)
 \end{matrix}\right]= \left[\begin{matrix}
  3 & -1 & 1 \\ -1 & 4 & -1 \\ 1 & -1 & 2
  \end{matrix}\right],\quad
  \left[\begin{matrix}
 G(2) & S_{1}(2)^{\top} & S_{2}(2)^{\top}\\
 S_{1}(2) & T_{11}(2) & T_{12}(2)\\
 S_{2}(2) & T_{22}(2) & T_{22}(2)
 \end{matrix}\right]= \left[\begin{matrix}
  5 & 1 & 2 \\ 1 & 4 & -2 \\ 2 & -2 & 4
  \end{matrix}\right].
  \end{array}
$$
Now, we aim to find an optimal control $u^{*}(\cdot)$ such that
$$J(x,i;u^{*}(\cdot))=\inf_{u\in L_{\mathbb{F}}^{2}(0,1;\mathbb{R})}J(x,i;u(\cdot))\triangleq V(m,i).$$
One can easily verify the cost functional \eqref{example2-functional} is uniformly convex. Hence, the above M-BSLQ problem admits a unique optimal control. According to the results  derived in Section \ref{section-construction}, we first need to make the following transformations  to construct the explicit optimal control:
\begin{equation}\label{example2-transformations}
\left\{
\begin{aligned}
&\upsilon(s)=u_{2}(s)+T_{22}(s,\alpha(s))^{-1}T_{21}(s,\alpha(s))Z(s),\\
&\widetilde{F}(i)=\widehat{C}(i)-\widehat{H}(i)T_{22}(i)^{-1}T_{21}(i),\\
&\widetilde{S}_{1}(i)=S_{1}(i)-T_{12}(i)T_{22}(i)^{-1}S_{2}(i),\\
&\widetilde{T}_{11}(i)=T_{11}(i)-T_{12}(i)T_{22}(i)^{-1}T_{21}(i).
\end{aligned}
\right.
\end{equation}
After some simple algebraic calculations, we can obtain
$$\left[\begin{matrix}
  \widetilde{F}(1)\\\widetilde{F}(2)
  \end{matrix}\right]= \left[\begin{matrix}
  0\\2
  \end{matrix}\right],\quad
   \left[\begin{matrix}
  \widetilde{S}_{1}(1)\\ \widetilde{S}_{1}(2)
  \end{matrix}\right]= \left[\begin{matrix}
  -0.5\\2
  \end{matrix}\right],\quad
   \left[\begin{matrix}
  \widetilde{T}_{11}(1)\\ \widetilde{T}_{11}(2)
  \end{matrix}\right]= \left[\begin{matrix}
  3.5\\3
  \end{matrix}\right]$$
Substituting the above value into \eqref{L-notations-2},  the CDREs \eqref{L-RE} in this example can be simplified as follows:
\begin{equation}\label{example2-L-RE}
   \left\{
   \begin{aligned}
   \dot{\Sigma}(s,1)&=6\Sigma(s,1)-8+\frac{17}{7}\Sigma(s,1)^{2}+\left[14+49\Sigma(s,1)\right]^{-1}\Sigma(s,1)^{2}-\left[-0.5\Sigma(s,1)+0.5\Sigma(s,2)\right]\\
   \dot{\Sigma}(s,2)&=-\frac{20}{3}\Sigma(s,2)-\frac{7}{3}+\frac{8}{3}\Sigma(s,2)^{2}+\left[3+9\Sigma(s,2)\right]^{-1}\left[2+2\Sigma(s,2)\right]^{2}-\left[0.7\Sigma(s,1)-0.7\Sigma(s,2)\right]\\
   \Sigma(1,1)&=0,\quad \Sigma(1,2)=0.
   \end{aligned}
   \right.
\end{equation}
Taking
\begin{align*}
    &\left[\begin{matrix} \mathcal{A}(1)\\ \mathcal{A}(2)\end{matrix}\right]=\left[\begin{matrix} -3\\ 10/3 \end{matrix}\right],\quad
    \left[\begin{matrix} \mathcal{B}(1)\\ \mathcal{B}(2)\end{matrix}\right]=\left[\begin{matrix} 1\\ 2 \end{matrix}\right],\quad
    \left[\begin{matrix} \mathcal{D}(1)\\ \mathcal{D}(2)\end{matrix}\right]=\left[\begin{matrix} 7\\ 3 \end{matrix}\right],\\
    &\left[\begin{matrix} \mathcal{Q}(1)\\ \mathcal{Q}(2)\end{matrix}\right]=\left[\begin{matrix} 8\\ 7/3 \end{matrix}\right],\quad
    \left[\begin{matrix} \mathcal{S}(1)\\ \mathcal{S}(2)\end{matrix}\right]=\left[\begin{matrix} 0\\ 2 \end{matrix}\right],\quad
    \left[\begin{matrix} \mathcal{R}_{1}(1)\\ \mathcal{R}_{1}(2)\end{matrix}\right]=\left[\begin{matrix} 7/17\\3/8 \end{matrix}\right],\quad
     \left[\begin{matrix} \mathcal{R}_{1}(1)\\ \mathcal{R}_{1}(2)\end{matrix}\right]=\left[\begin{matrix} 14\\3 \end{matrix}\right],
\end{align*}
then CAREs \eqref{example2-L-RE} admits the representation \eqref{L-RE-rewritten}. Obviously, we have
$$
\left[\begin{matrix}
8 & 0 & 0\\
0 & \frac{7}{17} & 0\\
0 & 0 & 14
\end{matrix}
\right]\gg 0,\quad
\left[\begin{matrix}
\frac{7}{3} & 0 & 2\\
0 & \frac{3}{8} & 0\\
2 & 0 & 3
\end{matrix}
\right]\gg 0.
$$
It follows from Theorem \ref{RE-solvability} and Remark \ref{rmk-RE-solvability} that the CDREs \eqref{example2-L-RE} admit a unique solution. Using the well-known finite difference method, we can present the following numerical solution figure of $\left[\Sigma(\cdot,1),\Sigma(\cdot,2)\right]$ for a clearer visualization.
\begin{figure}[H]
  \centering
  \begin{tabular}{c}
    \includegraphics[width=250pt,height=150pt]{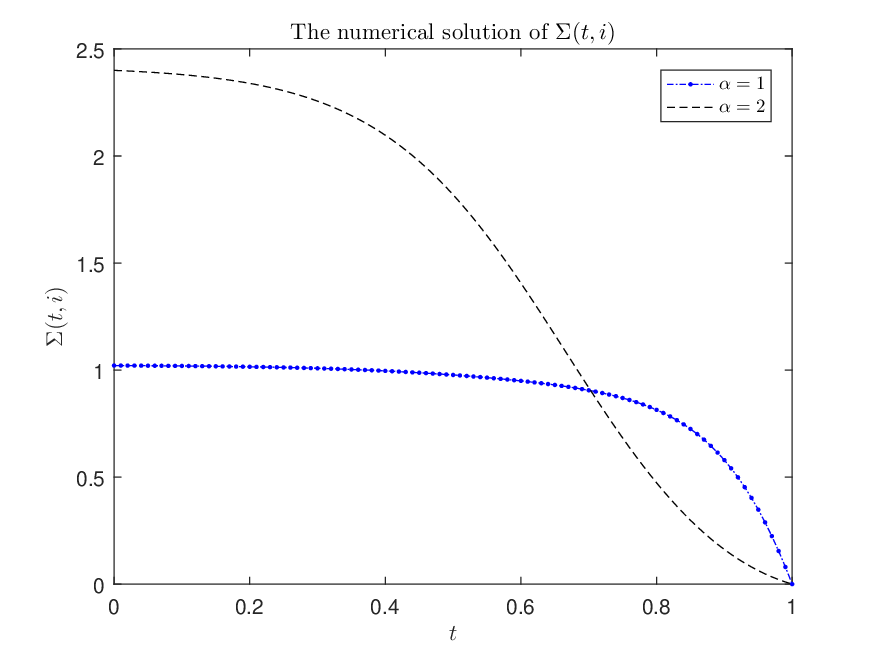}
  \end{tabular}
\end{figure}
With the solution to \eqref{example2-L-RE}, we now introduce the following BSDE:
\begin{equation}\label{example2-L-L-BSDE}
\left\{
\begin{aligned}
	d\varphi(s)&=\big\{\big[\widehat{A}-\widehat{\mathcal{H}}(\Sigma)T_{22}^{-1}S_{2}-\widehat{\mathcal{F}}(\Sigma)\widehat{\mathcal{T}}(\Sigma)^{-1}\Sigma \widetilde{S}_{1}+\Sigma G\big]\varphi
	+\widehat{\mathcal{F}}(\Sigma)\widehat{\mathcal{T}}(\Sigma)^{-1}\theta\big\}ds\\
	&\quad+\theta dW(s)+\mathbf{\gamma}\cdot d\mathbf{\widetilde{N}}(s), \quad s \in[0, T], \\
	\varphi(T)&=m,
\end{aligned}
\right.
\end{equation}
and SDE:
\begin{equation}\label{example2-phi}
	\left\{
	\begin{aligned}
		d\phi^{*}(s)&=\Big\{\big[-\widehat{A}^{\top}-G\Sigma+\widetilde{S}_{1}^{\top}\widehat{\mathcal{T}}(\Sigma)^{-1}\Sigma\widehat{\mathcal{F}}(\Sigma)^{\top}+S_{2}^{\top}T_{22}^{-1}\widehat{\mathcal{H}}(\Sigma)^{\top}\big]\phi^{*}-\big[\widetilde{S}_{1}^{\top}\widehat{\mathcal{T}}(\Sigma)^{-1}\Sigma \widetilde{S}_{1}\\
		&\quad+S_{2}^{\top}T_{22}^{-1}S_{2}-G\big]\varphi+\widetilde{S}_{1}^{\top}\widehat{\mathcal{T}}(\Sigma)^{-1}\theta\Big\}ds-\big[\widehat{\mathcal{T}}(\Sigma)^{-1}\big]^{\top}\big[\widehat{\mathcal{F}}(\Sigma)^{\top}\phi^{*}-\widetilde{T}_{11}\theta-\widetilde{S}_{1}\varphi\big]dW(s),\\
		\phi^{*}(0)=&0,
	\end{aligned}
	\right.
\end{equation}
where $\widehat{\mathcal{T}}(\Sigma), \, \widehat{\mathcal{F}}(\Sigma),\, \widehat{\mathcal{H}}(\Sigma)$ are defined in \eqref{L-notations-2}.
Then by Theorem \ref{L-thm-control}, the unique optimal control is given by
\begin{equation*}
\begin{aligned}
u_{2}^{*}(s)&=T_{22}(s,\alpha(s))^{-1}\big[\widehat{\mathcal{H}}\left(\Sigma(s,\alpha(s))\right)^{\top}-T_{21}(s,\alpha(s))\widehat{\mathcal{T}}\left(\Sigma(s,\alpha(s))\right)^{-1}\Sigma(s,\alpha(s))\widehat{\mathcal{F}}\left(\Sigma(s,\alpha(s))\right)^{\top}\big]\phi^{*}(s)\\
&\quad-T_{22}(s,\alpha(s))^{-1}\Big\{T_{21}(s,\alpha(s))\widehat{\mathcal{T}}\left(\Sigma(s,\alpha(s))\right)^{-1}\big[\theta(s)-\Sigma(s,\alpha(s))\widetilde{S}_{1}(s,\alpha(s))\varphi(s)\big]
 +S_{2}(s,\alpha(s))\varphi(s)\Big\}.
\end{aligned}
\end{equation*}
In addition, the value function follows from the Theorem \ref{L-thm-value} that
\begin{equation*}
	\begin{aligned}
		V(m,i)
		&=\mathbb{E}\int_{0}^{T}\Big[
		-\big<\varphi,\big(\widetilde{S}_{1}^{\top}\widehat{\mathcal{T}}(\Sigma)^{-1}\Sigma \widetilde{S}_{1}
		+S_{2}^{\top}T_{22}^{-1}S_{2}-G\big)\varphi\big>+2\big<\varphi,\widetilde{S}_{1}^{\top}\widehat{\mathcal{T}}(\Sigma)^{-1}\theta\big>+\big<\theta,\widetilde{T}_{11}\widehat{\mathcal{T}}(\Sigma)^{-1}\theta\big>\Big]ds.
	\end{aligned}
\end{equation*}

\end{example}



\end{document}